\newtheorem{thm}{Theorem}[section]
\newtheorem{lem}[thm]{Lemma}
\newtheorem{prop}[thm]{Proposition}
\newtheorem{exam}[thm]{Example}
\theoremstyle{definition}
\newtheorem{df}[thm]{Definition}
\theoremstyle{remark}
\newtheorem{rem}[thm]{Remark}
\theoremstyle{question}
\numberwithin{equation}{section}
\title{Restriction of Global Bases and Rhoades's Theorem}
\author{David B Rush}
\email{dbr@alum.mit.edu}
\date{\today}
\begin{document}

\begin{abstract}
	It is shown that if $\lambda$ is a multiple of a fundamental weight of $\mathfrak{sl}_k$, the lower global basis of the irreducible $U_q(\mathfrak{sl}_k)$-representation $V^{\lambda}$ with highest weight $\lambda$ comprises the disjoint union of the lower global bases of the irreducible $U_q(\mathfrak{sl}_{k-1})$-representations appearing in the decomposition of the restriction of $V^{\lambda}$ to $U_q(\mathfrak{sl}_{k-1})$.  
	
	Rhoades's description of the action of the long cycle on the dual canonical basis of $V^{\lambda}$ is then deduced from Berenstein--Zelevinsky's description of the action of the long element.  This yields a short proof of Rhoades's result on tableaux fixed under promotion which directly relates it to Stembridge's result on tableaux fixed under evacuation.

\end{abstract}

\maketitle

\section{Introduction}

\subsection{Motivation}

Let $\Lambda$ be a rectangular partition.  Perhaps the most celebrated example of the \textit{cyclic sieving phenomenon} of Reiner, Stanton, and White \cite{Reiner} is the theorem of Rhoades \cite{Rhoades} that \textit{jeu-de-taquin} promotion exhibits cyclic sieving:
\begin{itemize}
	\item on the set $SYT(\Lambda)$ of standard tableaux of shape $\Lambda$ with respect to the $q$-analogue of the Weyl dimension formula for the $(1, \ldots, 1)$-weight space of the irreducible $GL_{|\Lambda|}(\mathbb{C})$-representation with highest weight $\Lambda$;
	\item on the set $SSYT(\Lambda, k)$ of semistandard tableaux of shape $\Lambda$ with entries in $\lbrace 1, \ldots, k \rbrace$ (for any positive integer $k \geq \ell(\Lambda)$) with respect to the $q$-analogue of the Weyl dimension formula for the irreducible $GL_k(\mathbb{C})$-representation with highest weight $\Lambda$.
\end{itemize}

Rhoades's theorem was featured in several surveys of the cyclic sieving phenomenon (cf. Reiner--Stanton--White \cite{Reiner2} and Sagan \cite{Sagan}) and inspired activity in and around algebraic combinatorics.  The result for standard tableaux has been reproved thrice,\footnote{Purbhoo \cite{Purbhoo} used the Wronski map; Fontaine and Kamnitzer \cite{Fontaine} and Westbury \cite {Westbury} both used invariants in tensor products of representations of semisimple Lie algebras, and in type $A$ their results reduced to Rhoades's.  Fontaine and Kamnitzer recovered Rhoades's result that promotion exhibits cyclic sieving on the set of semistandard tableaux with fixed (cyclically symmetric) content, which encompasses the set of standard tableaux.  } but the result for semistandard tableaux has only been reproved by Shen and Weng \cite{Shen}, who discovered an equivalent result via cluster algebras --- although they did not realize their result was the same (cf. Hopkins \cite{Hopkins} for the details).  None of the subsequent proofs is simpler than Rhoades's original proof adhering to the representation-theoretic paradigm for observing cyclic sieving phenomena.  

Rhoades proved that, up to sign, the long cycle $c_{|\Lambda|} \in \mathfrak{S}_{|\Lambda|}$ permutes by promotion the elements --- indexed by $SYT(\Lambda)$ --- of the \textit{Kazhdan--Lusztig basis} of the irreducible $\mathfrak{S}_{|\Lambda|}$-representation $S^{\Lambda}$ corresponding to $\Lambda$.  Then, he proved that, for any positive integer $k \geq \ell(\Lambda)$, up to sign, $c_k \in \mathfrak{S}_k$ permutes by promotion the elements --- indexed by $SSYT(\Lambda, k)$ --- of the basis (constructed by Du \cite{Du} and Skandera \cite{Skandera}) of Kazhdan--Lusztig immanants of (the dual of) the irreducible $GL_k(\mathbb{C})$-representation $V^{\Lambda}$ with highest weight $\Lambda$.  After evaluating characters, he obtained his enumerative results.  

Fourteen years before Rhoades's theorem, Stembridge \cite{Stembridge} discovered that evacuation exhibits the \textit{$q=-1$ phenomenon} (the antecedent, pertaining to involutions, of the cyclic sieving phenomenon) on the same tableaux sets, with respect to the same generating functions, that Rhoades would consider --- without the stipulation that $\Lambda$ be rectangular.  His proof followed the same framework that Rhoades's later would: He proved that, up to sign, the long element $w_{0, |\Lambda|} \in \mathfrak{S}_{|\Lambda|}$ permutes by evacuation the Kazhdan--Lusztig basis of $S^{\Lambda}$, and he quoted the result of Berenstein and Zelevinsky \cite{Berenstein} that, for any positive integer $k \geq \ell(\Lambda)$, up to sign, $w_{0,k} \in \mathfrak{S}_k$ permutes by evacuation the \textit{dual canonical basis} of $V^{\Lambda}$.\footnote{Let $\lambda$ be the image of $\Lambda$ in the weight lattice of $\mathfrak{sl}_k$.  Then $V^{\Lambda}$ is (isomorphic to) the $\mathbb{C}$-form of the $q=1$ specialization of the $U_q(\mathfrak{sl}_k)$-representation $V^{\lambda}$ with highest weight $\lambda$.  By the dual canonical basis of $V^{\Lambda}$, we mean the basis of $V^{\Lambda}$ corresponding to Lusztig's dual canonical basis of $V^{\lambda}$ (cf. Berenstein--Zelevinsky \cite{Berenstein}).  }  

The author first noted this resemblance in Rush \cite{Rush}, in which he devised a combinatorial expression for certain plethysm coefficients by refining Rhoades's and Stembridge's results to tableaux he construed to be highest weight.  That article treated the stories for promotion and evacuation as separate but parallel.  The purpose of this article is to merge the stories and provide a proof of Rhoades's theorem that elucidates its relationship to Stembridge's theorem.  

\subsection{The Kazhdan--Lusztig basis}

Rhoades did not miss the apparent likeness between his work and Stembridge's; on the contrary, he explicitly noted that his description of the action of $c_{|\Lambda|}$ on the Kazhdan--Lusztig basis of $S^{\Lambda}$ was analogous to Stembridge's description of the action of $w_{0, |\Lambda|}$.  However, he did miss that the former is actually a consequence of the latter.  

For all positive integers $i$, let $\xi_i$ denote the $i^{\text{th}}$ \textit{partial Sch\"utzenberger involution}, which acts on a tableau by performing evacuation on the subtableau containing all entries less than or equal to $i$, and let $J$ denote \textit{jeu-de-taquin} promotion.  Then, as actions on $SYT(\Lambda)$, 
\[\xi_{|\Lambda|} \circ \xi_{|\Lambda|-1} = J.\]

Furthermore, 
\[w_{0,|\Lambda|} \circ w_{0,|\Lambda|-1} = c_{|\Lambda|}.\]  
Thus, on the Kazhdan--Lusztig basis of $S^{\Lambda}$, up to sign, $c_{|\Lambda|}$ acts by $J$ if $w_{0, |\Lambda|-1}$ acts by $\xi_{|\Lambda|-1}$.  

Suppose again that $\Lambda$ is rectangular, and let $\widehat{\Lambda}$ be the partition obtained from $\Lambda$ by removing the sole outside corner.  Then $S^{\widehat{\Lambda}}$ and $S^{\Lambda}$ are isomorphic as $\mathfrak{S}_{|\Lambda|-1}$-representations, and, up to sign, $w_{0, |\Lambda|-1}$ acts by $\xi_{|\Lambda|-1}$ on the Kazhdan--Lusztig basis of $S^{\widehat{\Lambda}}$.  Therefore, it suffices to show that the Kazhdan--Lusztig bases of $S^{\widehat{\Lambda}}$ and $S^{\Lambda}$ are compatible.   

Let $a$ and $m$ be positive integers for which $\Lambda = (m^a)$.  For a standard tableau $P$ of shape $\Lambda$, let $\widehat{P}$ be the standard tableau of shape $\widehat{\Lambda}$ obtained by removing the sole outside corner.  Let $Q$ be the standard tableau of shape $\Lambda$ such that the $i^{\textit{th}}$ column of $Q$ contains the entries $ia-a+1, \ldots, ia$ for all $1 \leq i \leq m$.  Let $\mathcal{C}$ be the left cell of $\mathfrak{S}_{ma}$ comprising the permutations with recording tableau $Q$, and let $\widehat{\mathcal{C}}$ be the left cell of $\mathfrak{S}_{ma-1}$ comprising the permutations with recording tableau $\widehat{Q}$.\footnote{The reader following along with Rhoades \cite{Rhoades} should be aware that Rhoades incorrectly identifies a Kazhdan--Lusztig left cell with a fixed insertion tableau instead of a fixed recording tableau.  }  Then the map $\phi \colon \widehat{\mathcal{C}} \rightarrow \mathcal{C}$ given by 
\[\widehat{x} \mapsto \widehat{x} (ma \ ma-1 \ \cdots \ ma - a + 1)\]
is a bijection, and $\mu[\phi(\widehat{x}), \phi(\widehat{y})] = \mu[\widehat{x}, \widehat{y}]$ for all $\widehat{x}, \widehat{y} \in \widehat{\mathcal{C}}$ (cf. Fact 11 in Garsia--McLarnan \cite{Garsia}).  Therefore, the map $\tilde{\phi} \colon S^{\widehat{\Lambda}} \rightarrow S^{\Lambda}$ given by $C_{\widehat{x}} \mapsto C_{\phi(\widehat{x})}$ (where $C_{\widehat{x}} \in \mathbb{C}[\mathfrak{S}_{ma-1}]$ and $C_{\phi(\widehat{x})} \in \mathbb{C}[\mathfrak{S}_{ma}]$ are the Kazhdan--Lusztig basis elements associated to $\widehat{x}$ and $\phi(\widehat{x})$, respectively) is an isomorphism of $\mathfrak{S}_{ma-1}$-representations.  Identifying the permutations in $\widehat{\mathcal{C}}$ and $\mathcal{C}$ with their respective insertion tableaux, we see that $\tilde{\phi}^{-1}$ is given by $C_{P} \mapsto C_{\widehat{P}}$.  

Hence, by Theorem 5.1 of Stembridge \cite{Stembridge}, there exist $\epsilon_{\Lambda}, \epsilon_{\widehat{\Lambda}} \in \lbrace \pm 1 \rbrace$ such that for all $P \in SYT(\Lambda)$,
\begin{align*}
c_{|\Lambda|} \cdot C_{P} & = w_{0,|\Lambda|} w_{0, |\Lambda|-1} \cdot C_{P} = w_{0, |\Lambda|} \cdot \tilde{\phi}\left(w_{0, |\Lambda|-1} \cdot C_{\widehat{P}}\right) 
\\ & =  w_{0, |\Lambda|} \cdot \tilde{\phi}\left(\epsilon_{\widehat{\Lambda}} C_{\xi_{|\Lambda|-1} (\widehat{P})}\right) =  w_{0, |\Lambda|} \cdot \epsilon_{\widehat{\Lambda}} C_{\xi_{|\Lambda|-1} (P)} 
\\ & = \epsilon_{\Lambda} \epsilon_{\widehat{\Lambda}} C_{\xi_{|\Lambda|}( \xi_{|\Lambda|-1} (P))} = \epsilon_{\Lambda} \epsilon_{\widehat{\Lambda}} C_{J(P)},
\end{align*}
whence Rhoades's result for standard tableaux follows.  

\subsection{The dual canonical basis}

Since Rhoades obtains his description of the action of $c_k$ on the basis of Kazhdan--Lusztig immanants of (the dual of) $V^{\Lambda}$ from his description of the action of $c_{|\Lambda|}$ on the Kazhdan--Lusztig basis of $S^{\Lambda}$, the argument we have just given is sufficient to conclude Rhoades's result for semistandard tableaux as well.  To do so, however, would be to sidestep the challenge of offering an analogous argument relating Rhoades's description of the action of $c_k$ to Berenstein and Zelevinsky's description of the action of $w_{0,k}$.  That the basis of Kazhdan--Lusztig immanants is (essentially) the dual canonical basis of $V^{\Lambda}$ (cf. Skandera \cite{Skandera}) suggests that such an argument should be feasible.  But it would entail a subtler invocation of the hypothesis that $\Lambda$ is rectangular, for, unlike the restriction of $S^{\Lambda}$ to $\mathfrak{S}_{|\Lambda|-1}$, the restriction of $V^{\lambda}$ to $U_q(\mathfrak{sl}_{k-1})$ is not irreducible.  

To this challenge we devote the remainder of the article.  We depart from the approach of Berenstein--Zelevinsky \cite{Berenstein} and Rhoades \cite{Rhoades}, who rely on explicit constructions of the dual canonical basis (the better to analyze combinatorial actions on the basis elements), and turn to a characterization of canonical bases due to Kashiwara \cite{Kashiwara2}.  In particular, we study the restriction of Kashiwara's \textit{lower} and \textit{upper global bases}, which coincide with Lusztig's canonical and dual canonical bases, respectively (cf. Grojnowski--Lusztig \cite{Grojnowski}).  We find that the decomposition of $V^{\lambda}$ as a direct sum of irreducible $U_q(\mathfrak{sl}_{k-1})$-representations respects the global bases: The lower global basis of $V^{\lambda}$ is the disjoint union of the lower global bases of the representations appearing in the decomposition, and the same holds for the upper global basis (up to scaling by Gaussian polynomials in $q$), just as we would wish.  

We provide background on quantum groups and crystal bases in section 2 and discuss the crystal structure on tableaux in section 3.  We carry out our investigation of global bases in section 4.  Then, in section 5, we realize Rhoades's description of the action of $c_k$ as a consequence of Berenstein and Zelevinky's description of the action of $w_{0,k}$ --- and thereby arrive at a direct proof of Rhoades's theorem.  

\section{Background}

\subsection{Quantum groups}

Let $\mathfrak{h} \subset \mathfrak{sl}_k(\mathbb{C})$ be a Cartan subalgebra, and let $P^{\vee} \subset \mathfrak{h}$ and $P \subset \mathfrak{h}^*$ be the coroot and weight lattices, respectively.  

Identify $P$ with the $\mathbb{Z}$-module generated by the symbols $E_1, \ldots, E_k$ subject to the relation $E_1 + \cdots + E_k = 0$.  For all $1 \leq i \leq k-1$, set $\alpha_i := E_i - E_{i+1}$.  

Choose $\alpha_1, \ldots, \alpha_{k-1} \in P$ to be the simple roots.  Let $h_1, \ldots, h_{k-1} \in P^{\vee}$ and $\omega_1, \ldots, \omega_{k-1} \in P$ be the corresponding simple coroots and fundamental weights, respectively.  Note that $h_1, \ldots, h_{k-1}$ generate $P^{\vee}$ and $\omega_1, \ldots, \omega_{k-1}$ generate $P$.    

A weight $\lambda \in P$ is \textit{dominant} if it can be expressed as a nonnegative linear combination of fundamental weights.  Let $P^+ \subset P$ be the semigroup of dominant weights.  

\begin{df}
The quantum group $U_q(\mathfrak{sl}_k)$ is the (unital) associative algebra over $\mathbb{Q}(q)$ generated by the symbols $e_i, f_i \enspace (1 \leq i \leq k-1)$ and $q^h \enspace (h \in P^{\vee})$ subject to the following relations:
\begin{enumerate}
	\item $q^0 = 1$ and $q^h q^{h'} = q^{h+h'}$ for $h, h' \in P^{\vee}$;
	\item $q^h e_i q^{-h} = q^{\alpha_i(h)} e_i$ and $q^h f_i q^{-h} = q^{-\alpha_i(h)} f_i$ for $h \in P^{\vee}$;
	\item $e_i f_j - f_j e_i = \delta_{i,j} \frac{q^{h_i}-q^{-h_i}}{q - q^{-1}}$;
	\item $e_i^2 e_j - (q + q^{-1}) e_i e_j e_i + e_j e_i^2 = f_i^2 f_j - (q + q^{-1}) f_i f_j f_i + f_j f_i^2 = 0$ for $|i-j|=1$;
	\item $e_i e_j - e_j e_i = f_i f_j - f_j f_i = 0$ for $|i-j|>1$.  
\end{enumerate}
\end{df}

For all $n \in \mathbb{Z}$, set $[x; n]_q := \frac{x q^n - x^{-1}q^{-n}}{q-q^{-1}}$ and $[n]_q := [1;n]_q$.  For all $n \geq 0$, set $e_i^{(n)} := \frac{e_i^n}{[n]_q!}$ and $f_i^{(n)} := \frac{f_i^n}{[n]_q!}$, where $[n]_q! := [n]_q \cdots [1]_q$ for all $n \geq 1$ and $[0]_q! := 1$.  

\subsection{Representations and crystal bases}

In this article, by a $U_q(\mathfrak{sl}_k)$-representation we mean a $U_q(\mathfrak{sl}_k)$-module $V$, finite-dimensional as a vector space over $\mathbb{Q}(q)$, admitting a \textit{weight space decomposition} $V = \bigoplus_{\mu \in P} V_{\mu}$, where 
\[V_{\mu} := \lbrace v \in V : q^h v = q^{\mu(h)}v \quad \forall h \in P^{\vee} \rbrace.\]

Given a $U_q(\mathfrak{sl}_k)$-representation $V$, we say that $\mu \in P$ is a \textit{weight} of $V$ if the $\mu$-weight space $V_{\mu}$ is nonzero, in which case we refer to the nonzero vectors in $V_{\mu}$ as \textit{weight vectors}.  If $\lambda$ is a weight of $V$ and there exists a weight vector $v_{\lambda} \in V_{\lambda}$ such that $e_i v_{\lambda} = 0$ for all $1 \leq i \leq k-1$ and $V = U_q(\mathfrak{sl}_k) v_{\lambda}$, we say that $\lambda$ is the \textit{highest weight} of $V$.  

\begin{prop}[Hong--Kang \cite{Hong}, Corollary 3.4.8]
	For all $\lambda \in P^+$, there exists an irreducible $U_q(\mathfrak{sl}_k)$-representation $V^{\lambda}$ with highest weight $\lambda$.  Furthermore, the map $\lambda \mapsto [V^{\lambda}]$ defines a bijection between $P^+$ and the set of isomorphism classes of irreducible $U_q(\mathfrak{sl}_k)$-representations.  
\end{prop}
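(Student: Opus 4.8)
The plan is to follow the standard highest-weight theory, adapted to the quantum setting: I would build each $V^{\lambda}$ as the irreducible quotient of a Verma module, prove finite-dimensionality by reducing to rank-one subalgebras, and then match the finite-dimensional irreducibles to dominant weights. First I would invoke the quantum PBW (triangular) decomposition of $U_q(\mathfrak{sl}_k)$: writing $U^-$, $U^0$, $U^+$ for the subalgebras generated by the $f_i$, the $q^h$, and the $e_i$ respectively, multiplication gives a vector-space isomorphism $U^- \otimes U^0 \otimes U^+ \to U_q(\mathfrak{sl}_k)$. For $\lambda \in P^+$ I would form the Verma module $M(\lambda) := U_q(\mathfrak{sl}_k)/I_{\lambda}$, where $I_{\lambda}$ is the left ideal generated by the $e_i$ together with the elements $q^h - q^{\lambda(h)}$ for $h \in P^{\vee}$. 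The image $v_{\lambda}$ of $1$ is a highest weight vector, and by the triangular decomposition $M(\lambda)$ is free of rank one over $U^-$, so it carries a weight decomposition with finite-dimensional weight spaces, all of whose weights are $\leq \lambda$ in the root order. Since any proper submodule avoids the line $\mathbb{Q}(q)\, v_{\lambda}$, the sum $N(\lambda)$ of all proper submodules is again proper and hence maximal; I set $V^{\lambda} := M(\lambda)/N(\lambda)$, which is then irreducible with highest weight $\lambda$.

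Second, the crux is finite-dimensionality. Let $v$ denote the highest weight vector of $V^{\lambda}$ and write $\lambda_i := \lambda(h_i) \geq 0$. I would show $f_i^{\lambda_i + 1} v = 0$. The element $w := f_i^{(\lambda_i + 1)} v$ is annihilated by every $e_j$: for $j \neq i$ this is immediate from relations (4)--(5) together with $e_j v = 0$, while for $j = i$ it follows from relation (3) and the induction $e_i f_i^{(n)} = f_i^{(n)} e_i + f_i^{(n-1)} [q^{h_i}; 1-n]_q$, since $[q^{h_i}; 1-n]_q$ acts on $v$ as $[\lambda_i + 1 - n]_q$, which vanishes at $n = \lambda_i + 1$. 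Hence if $w \neq 0$ it would be a highest weight vector of weight $\lambda - (\lambda_i + 1)\alpha_i$ generating a nonzero proper submodule, contradicting irreducibility; so $w = 0$. With $e_i v = 0$ and $f_i^{\lambda_i + 1} v = 0$, each rank-one subalgebra $\langle e_i, f_i, q^{\pm h_i} \rangle \cong U_q(\mathfrak{sl}_2)$ acts locally finitely, so the set of weights of $V^{\lambda}$ is stable under the Weyl group $W = \mathfrak{S}_k$ and lies in the convex hull of $W\lambda$; being $W$-stable and bounded it is finite, and since weight spaces are finite-dimensional, $V^{\lambda}$ is finite-dimensional.

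Third, I would verify that $\lambda \mapsto [V^{\lambda}]$ is well defined and bijective. Both well-definedness and injectivity follow because $\lambda$ is recovered from $V^{\lambda}$ as its unique maximal weight in the root order, so $V^{\lambda} \cong V^{\mu}$ forces $\lambda = \mu$; moreover any irreducible highest weight module of highest weight $\lambda$ is a quotient of $M(\lambda)$ and hence isomorphic to $V^{\lambda}$, which shows the construction is canonical. For surjectivity, let $V$ be any finite-dimensional irreducible $U_q(\mathfrak{sl}_k)$-representation in the sense defined; the weight decomposition over $P$ built into the definition pins down the type~1 form, so no sign twists arise. Since $V$ has finitely many weights, choose one, $\lambda$, maximal in the root order; then $V_{\lambda}$ is killed by all $e_i$, and a nonzero $v \in V_{\lambda}$ generates $V$ by irreducibility, so $\lambda$ is the highest weight of $V$. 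Restricting to the $i$-th rank-one subalgebra and invoking the classification of finite-dimensional type~1 $U_q(\mathfrak{sl}_2)$-modules, the vector $v \in \ker e_i \cap V_{\lambda}$ forces $\lambda_i \in \mathbb{Z}_{\geq 0}$, whence $\lambda \in P^+$ and $V \cong V^{\lambda}$ by uniqueness.

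The main obstacle is the finite-dimensionality argument and its companion, the $U_q(\mathfrak{sl}_2)$-reduction: establishing $f_i^{\lambda_i + 1} v = 0$ requires the precise $q$-commutation identity for $e_i f_i^{(n)}$ and care with the element $[q^{h_i}; 1-n]_q$, and transferring local finiteness of the rank-one actions to $W$-invariance of the weight set is exactly where the quantum Serre relations (4)--(5) are needed to mirror the classical argument. Since these are precisely the computations carried out in Hong--Kang \cite{Hong}, I would cite their Corollary 3.4.8 for the quantitative statements and present only the structural skeleton above.
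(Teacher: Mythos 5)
This Proposition is quoted in the paper without proof, as a citation to Corollary 3.4.8 of Hong--Kang \cite{Hong}, and your proposal correctly reconstructs the standard highest-weight argument used there: the Verma module $M(\lambda)$ and its unique irreducible quotient, finite-dimensionality via $f_i^{\lambda(h_i)+1}v = 0$ together with local finiteness of the rank-one subalgebras, and the maximal-weight argument giving well-definedness, injectivity, and surjectivity of $\lambda \mapsto [V^{\lambda}]$. The only slip is cosmetic: the commutation $e_j f_i = f_i e_j$ for $j \neq i$ follows from relation (3) (whose right-hand side vanishes since $\delta_{i,j} = 0$), not from the Serre relations (4)--(5).
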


To define the global bases of $V^{\lambda}$, we must first define a crystal basis, for which we require Kashiwara's $\mathbb{Q}(q)$-linear operators $\tilde{e}_i$ and $\tilde{f}_i$.  

\begin{df}
	Let $V$ be a $U_q(\mathfrak{sl}_k)$-representation, and let $\mu$ be a weight of $V$.  Each weight vector $u \in V_{\mu}$ uniquely determines a nonnegative integer $N$ and weight vectors $u_n \in V_{\mu + n \alpha_i} \cap \ker e_i$ for all $0 \leq n \leq N$ such that $u = u_0 + f_i u_1 + \cdots + f_i^{(N)} u_N$ (cf. Lemma 4.1.1 in Hong--Kang \cite{Hong}).  Then 
	\[\tilde{e}_i u := \sum_{n=1}^N f_i^{(n-1)} u_n \quad \text{and} \quad \tilde{f}_i u := \sum_{n=0}^N f_i^{(n+1)} u_n.\] 
\end{df}

Let $A_0$ be the localization of the ring $\mathbb{Q}[q]$ at the prime ideal $(q)$.  

\begin{df}
	Let $V$ be a $U_q(\mathfrak{sl}_k)$-representation.  A free $A_0$-submodule $L \subset V$ such that $\mathbb{Q}(q) \otimes_{A_0} L \cong V$ is a \textit{crystal lattice} if:
	\begin{enumerate}
		\item $L = \bigoplus_{\mu \in P} L_{\mu}$, where $L_{\mu} := L \cap V_{\mu}$;
		\item $\tilde{e}_i L \subset L$ and $\tilde{f}_i L \subset L$ for all $1 \leq i \leq k-1$.  
	\end{enumerate}
\end{df}

Given a crystal lattice $L$, it follows from condition (2) that the operators $\tilde{e}_i$ and $\tilde{f}_i$ act on the quotient $L/qL$.  For all $v \in L$, we denote by $\overline{v}$ the image of $v$ in $L/qL$.  

\begin{df}
	Let $V$ be a $U_q(\mathfrak{sl}_k)$-representation.  A pair $(L, B)$ consisting of a crystal lattice $L$ of $V$ and a $\mathbb{Q}$-basis $B$ of $L/qL$ is a \textit{crystal basis} if:
	\begin{enumerate}
		\item $B = \bigsqcup_{\mu \in P} B_{\mu}$, where $B_{\mu} := B \cap L_{\mu}/qL_{\mu}$;
		\item $\tilde{e}_i B \subset B \cup \lbrace 0 \rbrace$ and $\tilde{f}_i B \subset B \cup \lbrace 0 \rbrace$ for all $1 \leq i \leq k-1$;
		\item $\tilde{f}_i b = b'$ if and only if $\tilde{e}_i b' = b$ for all $b, b' \in B$.
	\end{enumerate}
\end{df}

Let $\lambda \in P^+$, and fix a weight vector $v_{\lambda} \in V^{\lambda}_{\lambda}$.  

\begin{prop}[Hong--Kang \cite{Hong}, Theorem 5.1.1; Kashiwara \cite{Kashiwara}, Theorem 4.2] \label{lattice}
	There exists a unique crystal basis $(L^{\lambda}, B^{\lambda})$ of $V^{\lambda}$ such that $L^{\lambda}_{\lambda} = A_0 v_{\lambda}$ and $B^{\lambda}_{\lambda} = \lbrace \overline{v_{\lambda}} \rbrace$.  Furthermore, 
	\[L^{\lambda} = \operatorname{span} \lbrace \tilde{f}_{i_r} \cdots \tilde{f}_{i_1} v_{\lambda} : r \geq 0; 1 \leq i_1, \ldots, i_r \leq k-1 \rbrace,\] and
	\[B^{\lambda} = \lbrace \tilde{f}_{i_r} \cdots \tilde{f}_{i_1} \overline{v_{\lambda}} : r \geq 0; 1 \leq i_1, \ldots, i_r \leq k-1 \rbrace \setminus \lbrace 0 \rbrace.\]
\end{prop}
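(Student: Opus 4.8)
The plan is to follow Kashiwara's strategy, reducing the statement to two pillars---the rank-one ($\mathfrak{sl}_2$) theory and a tensor-product rule---and marrying them through a single simultaneous (``grand loop'') induction on the weight. Uniqueness is the soft part, so I would dispatch it first, granting the explicit formulas. Suppose $(L, B)$ is any crystal basis with $L_{\lambda} = A_0 v_{\lambda}$ and $B_{\lambda} = \lbrace \overline{v_{\lambda}} \rbrace$. Since $v_{\lambda}$ is highest weight, $\tilde{e}_i \overline{v_{\lambda}} = 0$ for all $i$, and since $V^{\lambda} = U_q(\mathfrak{sl}_k) v_{\lambda}$ is irreducible, every weight below $\lambda$ is reached from $v_{\lambda}$ by lowering operators. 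By a downward induction on the height of $\lambda - \mu$, axioms (2) and (3) force $B = \lbrace \tilde{f}_{i_r} \cdots \tilde{f}_{i_1} \overline{v_{\lambda}} : r \geq 0 \rbrace \setminus \lbrace 0 \rbrace$: at each step the $\tilde{f}_i$ determine the new basis vectors and axiom (3) guarantees they are well-defined independently of the chosen lowering path. This pins down $B$, hence $L = \bigoplus_{b \in B} A_0 \hat{b}$ for any lift $\hat{b}$, and so uniqueness reduces to exhibiting \emph{one} crystal basis obeying the stated formulas.

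For the base case I would treat $U_q(\mathfrak{sl}_2)$ directly. The irreducible of highest weight $n\omega$ has $\mathbb{Q}(q)$-basis $\lbrace f^{(j)} v : 0 \leq j \leq n \rbrace$, and a direct computation from the Definition of $\tilde{e}$ and $\tilde{f}$ gives $\tilde{f}\, f^{(j)} v = f^{(j+1)} v$ and $\tilde{e}\, f^{(j)} v = f^{(j-1)} v$ (out-of-range terms being zero). Thus $L = \bigoplus_j A_0 f^{(j)} v$ is a crystal lattice and $B = \lbrace \overline{f^{(j)} v} \rbrace$ a crystal basis forming a single string, verifying every axiom by inspection. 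In particular the vector representation $V^{\omega_1} = \mathbb{Q}(q)^k$ acquires the crystal basis whose graph is the string $\overline{u_1} \to \cdots \to \overline{u_k}$ under the $\tilde{f}_i$.

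The crux is the tensor-product rule: if $(L_1, B_1)$ and $(L_2, B_2)$ are crystal bases of $V_1, V_2$, then $(L_1 \otimes_{A_0} L_2,\; B_1 \times B_2)$ is a crystal basis of $V_1 \otimes V_2$, with the Kashiwara operators acting by the signature rule
\[\tilde{f}_i(b_1 \otimes b_2) = \begin{cases} \tilde{f}_i b_1 \otimes b_2 & \text{if } \varphi_i(b_1) > \varepsilon_i(b_2), \\ b_1 \otimes \tilde{f}_i b_2 & \text{if } \varphi_i(b_1) \leq \varepsilon_i(b_2), \end{cases}\]
(and dually for $\tilde{e}_i$), where $\varepsilon_i(b) = \max \lbrace n \geq 0 : \tilde{e}_i^n b \neq 0 \rbrace$ and $\varphi_i(b) = \max \lbrace n \geq 0 : \tilde{f}_i^n b \neq 0 \rbrace$. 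After fixing $i$ and decomposing each $V_j$ into $\tilde{e}_i/\tilde{f}_i$-strings, this reduces weight space by weight space to the rank-one computation above; the delicacy is that the $\tilde{e}_i, \tilde{f}_i$ commute with the comultiplication only modulo $qL$, so one must track congruences rather than exact identities. I expect this to be the main obstacle. As in Kashiwara's and Hong--Kang's treatments, it cannot be isolated cleanly: the $\tilde{e}_i$-stability of the lattices, the linear independence of $B$ modulo $q$, the string axiom (3), and the tensor rule itself must all be established together in one interlocking induction on the height of $\lambda - \mu$, with the rank-one reduction furnishing the inductive step for each fixed $i$.

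To conclude, I would embed $V^{\lambda}$ into a tensor power $V^{\otimes N}$ of the vector representation by sending $v_{\lambda}$ to an appropriate tensor of highest-weight vectors, so that $V^{\lambda}$ is the $U_q(\mathfrak{sl}_k)$-submodule generated by $v_{\lambda}$. The tensor rule endows $V^{\otimes N}$ with the crystal basis $(L^{\otimes N}, B^{\otimes N})$; setting $L^{\lambda} := L^{\otimes N} \cap V^{\lambda}$ gives a crystal lattice, and the connected component of $B^{\otimes N}$ through $\overline{v_{\lambda}}$ descends to a $\mathbb{Q}$-basis $B^{\lambda}$ of $L^{\lambda}/qL^{\lambda}$ satisfying the crystal axioms. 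Finally, lowering from $v_{\lambda}$ and comparing with the tensor description via the signature rule shows $L^{\lambda} = \operatorname{span} \lbrace \tilde{f}_{i_r} \cdots \tilde{f}_{i_1} v_{\lambda} \rbrace$ and $B^{\lambda} = \lbrace \tilde{f}_{i_r} \cdots \tilde{f}_{i_1} \overline{v_{\lambda}} \rbrace \setminus \lbrace 0 \rbrace$, with $L^{\lambda}_{\lambda} = A_0 v_{\lambda}$ and $B^{\lambda}_{\lambda} = \lbrace \overline{v_{\lambda}} \rbrace$, matching the normalization under which uniqueness was proved.
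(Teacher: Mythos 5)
A point of order first: the paper never proves this proposition. It is stated as imported background, with the proof deferred to Hong--Kang (Theorem 5.1.1) and Kashiwara (Theorem 4.2), so there is no in-paper argument to measure you against; the right comparison is with those sources. Measured that way, your outline is a faithful roadmap of how the cited references actually proceed: rank-one theory plus a tensor product (signature) rule, welded together by Kashiwara's grand loop induction. Your $\mathfrak{sl}_2$ computation ($\tilde{f}\, f^{(j)} v = f^{(j+1)} v$, $\tilde{e}\, f^{(j)} v = f^{(j-1)} v$, since the string decomposition of $f^{(j)}v$ has a single term) and your statement of the signature rule are both correct.

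As a proof, however, the proposal has concrete gaps beyond the one you flag. (i) In the uniqueness step you pass from ``$V^{\lambda} = U_q(\mathfrak{sl}_k) v_{\lambda}$ is irreducible, so every weight below $\lambda$ is reached by lowering operators'' to ``every element of $B$ is reached from $\overline{v_{\lambda}}$ by the $\tilde{f}_i$.'' These are different claims: the first concerns the $f_i$, the second the $\tilde{f}_i$, and connectedness of the crystal graph of an \emph{arbitrary} crystal basis of $V^{\lambda}$ does not follow formally from the crystal axioms. It is itself a theorem (part of Kashiwara's uniqueness theorem), proved via the polarization or by comparison with the constructed basis, so your downward induction has no engine. (ii) In the concluding step you set $L^{\lambda} := L^{\otimes N} \cap V^{\lambda}$ and assert that this is a crystal lattice whose quotient carries the connected component of $\overline{v_{\lambda}}$ as a basis. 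That assertion is the compatibility of crystal bases with the decomposition of $V^{\otimes N}$ into irreducible summands, which in the references is derived together with --- indeed, downstream of --- existence for the irreducibles; quoting it as an input to existence reverses the dependency, and repairing this requires folding it into the induction rather than citing it. (iii) The grand loop itself --- the interlocking induction in which the tensor rule at the level of lattices, the $\tilde{e}_i$-stability, and the basis property modulo $q$ are established simultaneously --- is described but never carried out, as you yourself acknowledge. Since essentially all of the work lives in (iii), what you have is an accurate outline of the literature proof rather than a proof; for the purposes of this paper, the correct move is the one the author makes, namely to cite the result.
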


\subsection{Global bases}

Set $A := \mathbb{Q}[q, q^{-1}]$.  Let $U_A(\mathfrak{sl}_{k}) \subset U_q(\mathfrak{sl}_k)$ be the $A$-subalgebra generated by $e_i^{(n)}, f_i^{(n)}, q^h, \frac{[q^h;0]_q \cdots [q^h;1-n]_q}{[n]_q!} \enspace (n \geq 1)$.  

Let $\psi \colon U_q(\mathfrak{sl}_{k}) \rightarrow U_q(\mathfrak{sl}_{k})$ be the $\mathbb{Q}$-algebra automorphism given by 
\[e_i \mapsto e_i, \quad f_i \mapsto f_i, \quad q \mapsto q^{-1}, \quad \text{and} \quad q^h \mapsto q^{-h},\] 
and let $\psi$ also denote the induced $\mathbb{Q}$-linear automorphism of $V^{\lambda}$ given by $p v_{\lambda} \mapsto \psi(p) v_{\lambda}$ for all $p \in U_q(\mathfrak{sl}_k)$.  

\begin{thm}[Hong--Kang \cite{Hong}, Section 6.2] \label{global}
	There exists a unique $A_0$-basis $G^{\lambda} = \lbrace G_b \rbrace_{b \in B^{\lambda}}$ of $L^{\lambda}$ such that (i) $G^{\lambda}$ is a $\mathbb{Q}$-basis of $U_A(\mathfrak{sl}_k) v_{\lambda} \cap L^{\lambda} \cap \psi(L^{\lambda})$, and (ii) $\overline{G_b} = b$ and $\psi(G_b) = G_b$ for all $b \in B^{\lambda}$.    
\end{thm}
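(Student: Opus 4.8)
The plan is to construct the global basis by the standard Kashiwara--Lusztig triangularity method, building $G_b$ from $b$ so that it is both $\psi$-invariant and reduces to $b$ modulo $qL^\lambda$, and then to verify uniqueness. First I would set up the bar-involution structure: the automorphism $\psi$ descends to a $\mathbb{Q}$-linear involution on $V^\lambda$ preserving the $A$-lattice $V_A^\lambda := U_A(\mathfrak{sl}_k)v_\lambda$, and one checks that $\psi$ does \emph{not} in general preserve $L^\lambda$, so the interesting content is the interplay of the two lattices. The key object is the intersection $E := V_A^\lambda \cap L^\lambda \cap \psi(L^\lambda)$, which I claim is a free $A$-module such that the projection $E \to L^\lambda/qL^\lambda$ is an isomorphism of $\mathbb{Q}$-vector spaces. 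Granting this claim, each basis element $b \in B^\lambda$ has a unique preimage $G_b \in E$, and the collection $\{G_b\}$ is simultaneously an $A_0$-basis of $L^\lambda$ and a $\mathbb{Q}$-basis of $E$; since every element of $E$ is $\psi$-fixed (the whole point of intersecting with $\psi(L^\lambda)$ after imposing $\psi$-invariance), properties (i) and (ii) follow at once.

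To establish the claim I would argue by the triangularity lemma that underlies all constructions of global/canonical bases. Choose any $A_0$-basis of $L^\lambda$ lifting $B^\lambda$, i.e. elements $u_b \in L^\lambda$ with $\overline{u_b} = b$; one may take the monomials $\tilde f_{i_r}\cdots \tilde f_{i_1} v_\lambda$ supplied by Proposition~\ref{lattice}, and by a highest-weight argument these can be chosen to lie in $V_A^\lambda$ as well. The bar-involution then acts on $V_A^\lambda$ by a matrix that is \emph{unitriangular} with respect to a suitable partial order on $B^\lambda$ refining the weight order, with off-diagonal entries in $q^{-1}\mathbb{Q}[q^{-1}]$ after the appropriate normalization: concretely, $\psi(u_b) \equiv u_b \pmod{qL^\lambda + \sum_{b' \neq b} A\, u_{b'}}$ with controlled coefficients. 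A classical lemma (the ``Lusztig lemma,'' cf. Hong--Kang Section~6.2) says that for such a bar-operator there is a \emph{unique} $\psi$-fixed element $G_b = u_b + \sum_{b' < b} a_{b,b'}\, u_{b'}$ with $a_{b,b'} \in q A_0 \cap A$ (in fact in $q\mathbb{Q}[q]$), and that these $G_b$ then satisfy $\overline{G_b} = b$. Solving the fixed-point equation order by order in the partial order produces the coefficients recursively and forces them to lie in $qA_0$, which simultaneously pins down membership in $L^\lambda \cap \psi(L^\lambda)$ and the reduction $\overline{G_b} = b$.

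The main obstacle I anticipate is \emph{not} the fixed-point recursion itself, which is formal once triangularity is in hand, but rather verifying the two lattice-compatibility inputs that make the recursion run: that the bar-involution $\psi$ preserves the integral lattice $V_A^\lambda = U_A(\mathfrak{sl}_k)v_\lambda$, and that it is unitriangular modulo $qL^\lambda$ with the correct off-diagonal bound. The first reduces to checking $\psi$-stability on the divided-power generators of $U_A(\mathfrak{sl}_k)$, which is a direct computation with the defining relations, and the second is where the coincidence of Lusztig's and Kashiwara's constructions enters; I would invoke the comparison (cf. Grojnowski--Lusztig \cite{Grojnowski}) only if a self-contained triangularity estimate proved cumbersome. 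Once both inputs are secured, uniqueness is immediate: any two $\psi$-fixed lifts of $b$ that lie in $E$ differ by a $\psi$-fixed element of $qL^\lambda \cap V_A^\lambda$, and the unitriangularity forces such a difference to vanish, so $G_b$ is determined and $G^\lambda = \{G_b\}_{b \in B^\lambda}$ is the asserted basis.
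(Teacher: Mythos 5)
The paper itself offers no proof of this statement: it is quoted verbatim from Hong--Kang, Section 6.2, where it is Kashiwara's existence-and-uniqueness theorem for global bases, proved by the ``grand loop'' induction. So your sketch must be judged against the known proofs, and there it has a genuine gap. Everything in your argument is routed through the triangularity input: a basis $\lbrace u_b \rbrace$ of $U_A(\mathfrak{sl}_k) v_{\lambda} \cap L^{\lambda}$ lifting $B^{\lambda}$ on which $\psi$ acts unitriangularly with controlled off-diagonal coefficients. For the Hecke algebra this input is free (the standard basis $T_w$ provides it); for $V^{\lambda}$ it is not, and exhibiting such a basis is essentially the entire content of the theorem. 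The monomials $\tilde{f}_{i_r} \cdots \tilde{f}_{i_1} v_{\lambda}$ you propose carry no a priori $\psi$-triangularity: Kashiwara's operators do not interact well with $\psi$, there are many monomials lying over each crystal element, and no partial order on $B^{\lambda}$ with the required property is available without further work. This is precisely why Kashiwara's proof is a long simultaneous induction over all dominant weights, and why Lusztig's construction instead passes through PBW bases of $U_q^-$, where triangularity of the bar involution is itself a nontrivial theorem. Your proposed fallback --- citing the Grojnowski--Lusztig comparison --- is circular for this purpose, since that comparison presupposes that both Lusztig's canonical basis and Kashiwara's global basis already exist; one could instead invoke Lusztig's construction outright, but that replaces your proof rather than completes it.

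A second, related flaw: in your first paragraph you derive property (ii) from the assertion that \emph{every} element of $E := U_A(\mathfrak{sl}_k) v_{\lambda} \cap L^{\lambda} \cap \psi(L^{\lambda})$ is $\psi$-fixed, calling this ``the whole point'' of intersecting with $\psi(L^{\lambda})$. That is not a formal consequence of the definition of $E$: membership in $L^{\lambda} \cap \psi(L^{\lambda})$ constrains poles at $q=0$ and $q=\infty$ but does not by itself fix an element under $\psi$. The identity $E = E^{\psi}$ is true, but only \emph{a posteriori}, because $E$ turns out to admit a $\psi$-fixed $\mathbb{Q}$-basis --- which is the theorem being proved; using it as an input is circular. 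The parts of your argument that are genuinely formal --- the recursive fixed-point lemma once triangularity is granted, the uniqueness argument from injectivity of $E \rightarrow L^{\lambda}/qL^{\lambda}$, and the Nakayama-type step showing a lift of $B^{\lambda}$ inside $E$ is an $A_0$-basis of $L^{\lambda}$ --- are all fine; the balanced-triple property and the triangularity it rests on are where the substance lives, and those are exactly the points your proposal assumes.
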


Let $\varphi \colon U_q(\mathfrak{sl}_k) \rightarrow U_q(\mathfrak{sl}_k)$ be the $\mathbb{Q}$-algebra antiautomorphism given by 
\[e_i \mapsto f_i, \quad f_i \mapsto e_i, \quad q \mapsto q, \quad \text{and} \quad q^h \mapsto q^h.\]  The \textit{Shapovalov form} on $V^{\lambda}$ is the unique symmetric bilinear form $(\cdot, \cdot)$ such that $(v_{\lambda}, v_{\lambda}) = 1$ and $(pu, v) = (u, \varphi(p)v)$ for all $u, v \in V^{\lambda}$ and $p \in U_q(\mathfrak{sl}_k)$.  

\begin{prop} \label{perpen}
	If $\mu, \nu \in P$ are distinct weights of $V^{\lambda}$, then $(V^{\lambda}_{\mu}, V^{\lambda}_{\nu}) = 0$.  
\end{prop}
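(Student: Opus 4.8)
The plan is to exploit the defining adjointness property of the Shapovalov form together with the fact that the weight grading is governed by the commuting operators $q^h$ for $h \in P^\vee$. The essential idea is that distinct weights are separated by some coroot, and the adjoint relation for the corresponding $q^h$ forces the pairing between the two weight spaces to vanish.

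Concretely, let $\mu \neq \nu$ be distinct weights of $V^\lambda$, and pick $u \in V^\lambda_\mu$ and $v \in V^\lambda_\nu$. Since $\mu \neq \nu$ as elements of $P$ and the simple coroots $h_1, \ldots, h_{k-1}$ generate $P^\vee$, the pairing between $P$ and $P^\vee$ detects the difference: there exists some $h \in P^\vee$ with $\mu(h) \neq \nu(h)$. Now I compute $(q^h u, v)$ in two ways. On one hand, $q^h u = q^{\mu(h)} u$ by the definition of the weight space, so $(q^h u, v) = q^{\mu(h)}(u,v)$. On the other hand, using the adjointness property $(pu, w) = (u, \varphi(p)w)$ with $p = q^h$, and noting that $\varphi(q^h) = q^h$, I get $(q^h u, v) = (u, q^h v) = q^{\nu(h)}(u, v)$.

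Setting the two expressions equal yields $\left(q^{\mu(h)} - q^{\nu(h)}\right)(u,v) = 0$. The hard part, such as it is, is confirming that the scalar factor is nonzero in $\mathbb{Q}(q)$: since $\mu(h)$ and $\nu(h)$ are distinct integers, the Laurent monomials $q^{\mu(h)}$ and $q^{\nu(h)}$ are distinct nonzero elements of $\mathbb{Q}(q)$, so their difference is a unit. Hence $(u,v) = 0$. Because $u \in V^\lambda_\mu$ and $v \in V^\lambda_\nu$ were arbitrary, this shows $(V^\lambda_\mu, V^\lambda_\nu) = 0$, as desired.

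The only subtlety worth double-checking is the claim that distinct weights are separated by the coroot pairing, i.e.\ that the natural map $P \to \operatorname{Hom}(P^\vee, \mathbb{Z})$ is injective. This is immediate from the nondegeneracy of the pairing between the weight lattice and the coroot lattice: if $\mu(h) = \nu(h)$ for every $h \in P^\vee$, then $(\mu - \nu)(h_i) = 0$ for each simple coroot $h_i$, and since the $h_i$ generate $P^\vee$ while the fundamental weights pair dually with them, $\mu - \nu = 0$. I expect no genuine obstacle here; the proof is a short two-line computation once the adjoint relation and the $\varphi$-invariance of $q^h$ are invoked.
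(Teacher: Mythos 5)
Your proof is correct and follows essentially the same route as the paper's: both compute $(q^h u, v)$ in two ways via the adjointness property of the Shapovalov form and the fact that $\varphi(q^h) = q^h$, concluding that $q^{\mu(h)}(u,v) = q^{\nu(h)}(u,v)$ forces $(u,v) = 0$ once some $h \in P^{\vee}$ separates $\mu$ and $\nu$. The paper's version is just terser, leaving implicit the points you spell out (the existence of a separating $h$ and the invertibility of $q^{\mu(h)} - q^{\nu(h)}$ in $\mathbb{Q}(q)$).
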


\begin{proof}
	Let $u \in V^{\lambda}_{\mu}$ and $v \in V^{\lambda}_{\nu}$.  For all $h \in P^{\vee}$, \[q^{\mu(h)} (u,v) = (q^h u, v) = (u, q^h v) = q^{\nu(h)} (u, v).\]
\end{proof}

\begin{df}
	The \textit{lower global basis} of $V^{\lambda}$ is $G^{\lambda}$, and the \textit{upper global basis} $F^{\lambda}$ of $V^{\lambda}$ is the dual basis to $G^{\lambda}$ with respect to the Shapovalov form.  
\end{df}

\begin{prop} \label{wvec}
	$G^{\lambda}_{\mu} := \lbrace G_b \rbrace_{b \in B^{\lambda}_{\mu}}$ and $F^{\lambda}_{\mu} := \lbrace F_b \rbrace_{b \in B^{\lambda}_{\mu}}$ are $\mathbb{Q}(q)$-bases of $V^{\lambda}_{\mu}$ for all $\mu \in P$.  
\end{prop}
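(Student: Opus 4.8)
The plan is to prove the claim for the lower global basis $G^{\lambda}$ and then to deduce the claim for the upper global basis $F^{\lambda}$ by duality. Everything hinges on a single point: that each basis element is a weight vector, i.e.\ that $G_b \in V^{\lambda}_{\mu}$ whenever $b \in B^{\lambda}_{\mu}$. Once this is known the rest is formal. Since $G^{\lambda}$ is an $A_0$-basis of $L^{\lambda}$ and $\mathbb{Q}(q) \otimes_{A_0} L^{\lambda} \cong V^{\lambda}$, the set $G^{\lambda}$ is a $\mathbb{Q}(q)$-basis of $V^{\lambda} = \bigoplus_{\nu} V^{\lambda}_{\nu}$. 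If every $G_b$ is homogeneous of the weight of $b$, then expanding an arbitrary $v \in V^{\lambda}_{\mu}$ in $G^{\lambda}$ and projecting onto the $\mu$-weight space shows that $\lbrace G_b \rbrace_{b \in B^{\lambda}_{\mu}}$ spans $V^{\lambda}_{\mu}$; linear independence is inherited from $G^{\lambda}$, so $G^{\lambda}_{\mu}$ is a basis.

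To prove homogeneity I would exploit that all of the structures feeding into Theorem \ref{global} are graded by the weight lattice. The generators $e_i^{(n)}, f_i^{(n)}, q^h$ of $U_A(\mathfrak{sl}_k)$ are weight-homogeneous operators, so $U_A(\mathfrak{sl}_k) v_{\lambda}$ is a weight-graded submodule of $V^{\lambda}$; the lattice $L^{\lambda} = \bigoplus_{\nu} L^{\lambda}_{\nu}$ is graded by definition; and, since $\psi$ fixes each $e_i$ and $f_i$ and sends $q^h \mapsto q^{-h}$, it preserves the root-lattice grading of $U_q(\mathfrak{sl}_k)$ and hence preserves every weight space $V^{\lambda}_{\nu}$, so that $\psi(L^{\lambda}) = \bigoplus_{\nu} \psi(L^{\lambda}_{\nu})$ with $\psi(L^{\lambda}_{\nu}) \subseteq V^{\lambda}_{\nu}$. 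Writing $G_b = \sum_{\nu} (G_b)_{\nu}$ in weight components, I would argue that each homogeneous piece $(G_b)_{\nu}$ again lies in $U_A(\mathfrak{sl}_k) v_{\lambda} \cap L^{\lambda} \cap \psi(L^{\lambda})$ and is $\psi$-invariant (using $\psi(G_b) = G_b$ together with the fact that $\psi$ respects the grading). Reducing modulo $qL^{\lambda}$ and using $b \in B^{\lambda}_{\mu}$, one finds $\overline{(G_b)_{\mu}} = b$ and $(G_b)_{\nu} \in qL^{\lambda}$ for $\nu \neq \mu$. Thus the single weight component $(G_b)_{\mu}$ satisfies every condition that characterizes $G_b$ in Theorem \ref{global}, whence $G_b = (G_b)_{\mu} \in V^{\lambda}_{\mu}$.

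For the upper global basis I would combine this with Proposition \ref{perpen}. The Shapovalov form is nondegenerate on $V^{\lambda}$ --- its radical is a proper submodule of the irreducible $V^{\lambda}$, hence zero, because $(v_{\lambda}, v_{\lambda}) = 1$ --- and by Proposition \ref{perpen} distinct weight spaces are orthogonal, so the form restricts to a nondegenerate pairing on each $V^{\lambda}_{\mu}$. Since $G^{\lambda}_{\mu}$ is a basis of $V^{\lambda}_{\mu}$ by the first part, it admits a unique dual basis $\lbrace F'_b \rbrace_{b \in B^{\lambda}_{\mu}}$ inside $V^{\lambda}_{\mu}$, characterized by $(G_{b'}, F'_b) = \delta_{b, b'}$ for $b, b' \in B^{\lambda}_{\mu}$. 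I would then verify that $F'_b = F_b$: for $b'$ of weight $\mu$ the pairing is $\delta_{b,b'}$ by construction, while for $b'$ of any other weight it is $0 = \delta_{b,b'}$ by orthogonality; hence $F'_b$ pairs as a Kronecker delta against all of $G^{\lambda}$ and so must equal the dual basis element $F_b$. Therefore $F_b = F'_b \in V^{\lambda}_{\mu}$, and $F^{\lambda}_{\mu}$ is the dual basis of $G^{\lambda}_{\mu}$, hence a basis of $V^{\lambda}_{\mu}$.

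The step I expect to be the main obstacle is the application of uniqueness to the single component $(G_b)_{\mu}$ in the second paragraph: one must confirm that $(G_b)_{\mu}$ genuinely inherits membership in $U_A(\mathfrak{sl}_k) v_{\lambda}$, $L^{\lambda}$, and $\psi(L^{\lambda})$ simultaneously, and that the uniqueness clause of Theorem \ref{global} may be invoked element by element rather than only for the basis as a whole. The cleanest way to secure the latter is to use that $G^{\lambda}$ is at once a $\mathbb{Q}$-basis of $U_A(\mathfrak{sl}_k) v_{\lambda} \cap L^{\lambda} \cap \psi(L^{\lambda})$ and an $A_0$-basis of $L^{\lambda}$, so that the coefficients expressing $(G_b)_{\mu}$ in $G^{\lambda}$ lie in $A_0 \cap \mathbb{Q} = \mathbb{Q}$ while the off-diagonal ones lie in $qA_0 \cap \mathbb{Q} = \lbrace 0 \rbrace$, forcing $(G_b)_{\mu} = G_b$. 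Once homogeneity is in hand, the remaining arguments are routine linear algebra over the weight-graded decomposition.
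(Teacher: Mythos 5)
Your proposal is correct, and on the harder half of the statement --- weight-homogeneity of the lower global basis --- it takes a genuinely different route from the paper. The paper settles that point by citation: from Lusztig's construction of the canonical basis \cite{Lusztig} (which, under the identification of the lower global basis with the canonical basis \cite{Grojnowski}, is built one weight space at a time), $G^{\lambda} \cap V^{\lambda}_{\mu}$ is already known to be a $\mathbb{Q}(q)$-basis of $V^{\lambda}_{\mu}$, and homogeneity of the basis elements follows. You instead derive homogeneity internally from Kashiwara's characterization (Theorem~\ref{global}): the three lattices $U_A(\mathfrak{sl}_k)v_{\lambda}$, $L^{\lambda}$, and $\psi(L^{\lambda})$ are each weight-graded (the last because $\psi$ fixes $e_i$ and $f_i$ and only inverts $q^h$, hence preserves weight spaces), so every weight component $(G_b)_{\nu}$ remains in the triple intersection; expanding $(G_b)_{\mu}$ in the $\mathbb{Q}$-basis $G^{\lambda}$ of that intersection and reducing modulo $qL^{\lambda}$ forces the off-diagonal coefficients into $qA_0 \cap \mathbb{Q} = \lbrace 0 \rbrace$ and the diagonal one to equal $1$, giving $(G_b)_{\mu} = G_b$. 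This coefficient argument is exactly the right repair of the element-by-element-uniqueness worry you yourself flagged, and it is sound. What your route buys is self-containedness: the proposition becomes a formal consequence of the defining properties of $G^{\lambda}$, with no appeal to Lusztig's construction or to the comparison between global and canonical bases. What the paper's route buys is brevity, at the cost of an external dependency. Your treatment of the upper global basis --- orthogonality of distinct weight spaces (Proposition~\ref{perpen}) plus nondegeneracy of the Shapovalov form, the latter of which you additionally justify via irreducibility --- is essentially the paper's own argument, carried out in slightly more detail.
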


\begin{proof}
	From Lusztig's construction of the canonical basis (cf. Lusztig \cite{Lusztig}), we see that $G^{\lambda} \cap V^{\lambda}_{\mu}$ is a $\mathbb{Q}(q)$-basis of $V^{\lambda}_{\mu}$ for all $\mu \in P$.  Hence $G^{\lambda}$ consists of weight vectors, so $G^{\lambda}_{\mu} \subset V^{\lambda}_{\mu}$ for all $\mu \in P$, which implies $G^{\lambda}_{\mu} = G^{\lambda} \cap V^{\lambda}_{\mu}$ for all $\mu \in P$.  
	
	It follows that $(F^{\lambda}_{\mu}, V^{\lambda}_{\nu}) = 0$ for all distinct $\mu, \nu \in P$ (cf. Proposition~\ref{perpen}).  The Shapovalov form is nondegenerate, so $F^{\lambda}_{\mu} \subset V^{\lambda}_{\mu}$ for all $\mu \in P$.  
\end{proof}

\subsection{Classical limit}

Let $\mathsf{V}^{\lambda}$ be the $\mathbb{C}$-vector space generated by the symbols $\mathsf{F}_b$ for $b \in B_{\lambda}$.  For all $i$, the entries of the matrices of $e_i$ and $f_i$ on the $\mathbb{Q}(q)$-vector space $V^{\lambda} = \bigoplus_{b \in B^{\lambda}} \mathbb{Q}(q) F_b$ are regular at $q=1$ (cf. Berenstein--Zelevinsky \cite{Berenstein}).  

Thus, specializing the matrices of $e_i$ and $f_i$ at $q = 1$ for all $i$, and letting $h$ act as multiplication by $\mu(h)$ on $\mathsf{F}^{\lambda}_{\mu} := \lbrace \mathsf{F}_b \rbrace_{b \in B^{\lambda}_{\mu}}$ for all $\mu \in P$ and $h \in P^{\vee}$ equips the $\mathbb{C}$-vector space $\mathsf{V}^{\lambda} = \bigoplus_{b \in B^{\lambda}} \mathbb{C} \mathsf{F}_b$ with the structure of an $\mathfrak{sl}_k$-module.  

As an $\mathfrak{sl}_k$-representation, $\mathsf{V}^{\lambda}$ is irreducible with highest weight $\lambda$ (cf. \cite{Berenstein}).  We refer to $\mathsf{V}^{\lambda}$ as the $\mathbb{C}$-form of the $q=1$ specialization of $V^{\lambda}$.  

\section{Crystal structure on tableaux}

\subsection{Review}

Let $\lambda \in P^+$, and let $l_1, \ldots, l_{k-1}$ be nonnegative integers such that $\lambda = l_1 \omega_1 + \cdots + l_{k-1} \omega_{k-1}$.  For all $1 \leq i \leq k-1$, set $\lambda_i := l_i + \cdots + l_{k-1}$, and let $\lambda$ also denote the partition $(\lambda_1, \ldots, \lambda_{k-1})$.  

The crystal basis $(L^{\lambda}, B^{\lambda})$ is an effective combinatorial model for the $U_q(\mathfrak{sl}_k)$-representation $V^{\lambda}$ in part because there exists an identification of $B^{\lambda}$ with $SSYT(\lambda, k)$ under which the action of Kashiwara's operators admits a simple description.  

\begin{df}
	Label the boxes in the Young diagram of shape $\lambda$ with the integers in $\lbrace 1, \ldots, |\lambda| \rbrace$ so that, for all $1 \leq i \leq k-1$, the boxes in the $i^{\text{th}}$ row are assigned the labels $\lambda_1 + \cdots + \lambda_{i-1} + 1, \ldots, \lambda_1 + \cdots + \lambda_i$, and the labels increase \textit{from right to left} within each row.  
	  
	Let $T \in SSYT(\lambda, k)$, and let $w_T \colon \lbrace 1, \ldots, |\lambda| \rbrace \rightarrow \lbrace 1, \ldots, k \rbrace$ be the map associating to each label the entry in the underlying box of $T$.  From the set $\lbrace 1, \ldots, |\lambda| \rbrace$, remove all $j$ for which $w_T(j) \notin \lbrace i, i+1 \rbrace$.  Then, iteratively remove all consecutive $j < j'$ for which $w_T(j) = i$ and $w_T(j') = i+1$.\footnote{We understand $j < j'$ to be consecutive if either $j' = j+1$ or all integers between $j$ and $j'$ have already been removed from the set.  }    
	
	Let $S_i$ denote the remaining set, and set $w_{T,i} := w_T|_{S_i}$.  
	\begin{itemize}
		\item If $w_{T,i}^{-1}(i+1)$ is nonempty, let $\tilde{e}_i(T)$ be the tableau obtained from $T$ by changing the entry in the box labeled \enspace $\max w_{T,i}^{-1}(i+1)$ \enspace from $i+1$ to $i$; otherwise, set $\tilde{e}_i(T) :=0$.  
		\item If $w_{T,i}^{-1}(i)$ is nonempty, let $\tilde{f}_i(T)$ be the tableau obtained from $T$ by changing the entry in the box labeled \enspace $\min w_{T,i}^{-1}(i)$ \enspace from $i$ to $i+1$; otherwise, set $\tilde{f}_i(T) :=0$.  
	\end{itemize}
\end{df}

\begin{rem}
	We depart from Hong--Kang \cite{Hong} and follow Kashiwara \cite{Kashiwara} in describing the action of $\tilde{e}_i$ and $\tilde{f}_i$ on tableaux because the approach in \cite{Hong} relies on the tensor product rule for crystals, which is outside the scope of this article.  The labeling in \cite{Kashiwara} of the boxes of $\lambda$ differs from ours, but any \textit{admissible} labeling yields the same crystal structure (cf. Theorem 7.3.6 in \cite{Hong}).  The crystal structure on tableaux is originally due to Kashiwara and Nakashima \cite{KashiwaraN}.  
\end{rem}

\begin{thm}[Kashiwara \cite{Kashiwara}, Theorem 5.1] \label{crystal}
	The operators $\tilde{e}_i$ and $\tilde{f}_i$ map $SSYT(\lambda, k)$ into $SSYT(\lambda, k) \cup \lbrace 0 \rbrace$ for all $1 \leq i \leq k-1$.  
	
	Furthermore, let $T_{\lambda} \in SSYT(\lambda, k)$ be the tableau satisfying $w_{T_{\lambda}}^{-1}(i) = \lbrace \lambda_1 + \cdots + \lambda_{i-1} + 1, \ldots, \lambda_1 + \cdots + \lambda_i \rbrace$ for all $1 \leq i \leq k-1$.  Then there exists a bijection $\pi \colon B^{\lambda} \rightarrow SSYT(\lambda, k)$ such that $\pi(\overline{v_{\lambda}}) = T_{\lambda}$ and $\pi$ commutes with $\tilde{e}_i$ and $\tilde{f}_i$.     
\end{thm}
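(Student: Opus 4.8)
The plan is to realize the combinatorial crystal on $SSYT(\lambda, k)$ inside a tensor power of the crystal of the vector representation and to identify the pertinent connected component with $B^{\lambda}$ by invoking the uniqueness of crystal bases. I begin with the vector representation $V^{\omega_1}$, whose crystal basis is $B^{\omega_1} = SSYT(\omega_1, k) = \{ \boxed{1}, \ldots, \boxed{k} \}$ with $\tilde{f}_i \boxed{i} = \boxed{i+1}$, $\tilde{e}_i \boxed{i+1} = \boxed{i}$, and all other operators vanishing; the single-box case of the definition reproduces this structure verbatim. I then invoke the tensor product theorem for crystal bases (cf. Hong--Kang \cite{Hong}), according to which $(V^{\omega_1})^{\otimes n}$ admits the crystal basis $(B^{\omega_1})^{\otimes n}$, whose elements I identify with words $w_1 \cdots w_n \in \{1, \ldots, k\}^n$. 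On these words $\tilde{e}_i$ and $\tilde{f}_i$ act by the \emph{signature rule}: after deleting every letter outside $\{i, i+1\}$ and then cancelling each consecutive pair consisting of an $i$ followed by an $i+1$, the surviving letters read $i+1, \ldots, i+1, i, \ldots, i$, and $\tilde{e}_i$ converts the last surviving $i+1$ to an $i$ while $\tilde{f}_i$ converts the first surviving $i$ to an $i+1$ (returning $0$ when the relevant letter is absent).

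The link between the two pictures is the observation that the combinatorially defined operators $\tilde{e}_i, \tilde{f}_i$ on tableaux are exactly this signature rule applied to the reading word $\mathrm{rd}(T) := w_T(1) w_T(2) \cdots w_T(|\lambda|)$ determined by the labeling: the prescribed cancellation of consecutive $j < j'$ with $w_T(j) = i$ and $w_T(j') = i+1$ is the bracket cancellation above, and the labels $\max w_{T,i}^{-1}(i+1)$ and $\min w_{T,i}^{-1}(i)$ isolate the same two letters. Hence $\mathrm{rd} \colon SSYT(\lambda, k) \to \{1, \ldots, k\}^{|\lambda|}$ intertwines the tableau operators with the word operators. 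I would next establish the first assertion of the theorem as a purely local check: the box singled out by $\tilde{e}_i$ or $\tilde{f}_i$ is positioned precisely so that changing its entry preserves weak increase along rows and strict increase down columns, so $\tilde{e}_i$ and $\tilde{f}_i$ carry $SSYT(\lambda, k)$ into $SSYT(\lambda, k) \cup \{0\}$. In particular $\mathrm{rd}(SSYT(\lambda, k))$ is stable under the word operators and is therefore a union of connected components of $(B^{\omega_1})^{\otimes |\lambda|}$.

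To determine which components arise, I would show that $\mathrm{rd}(SSYT(\lambda, k))$ is a single component with highest-weight element $\mathrm{rd}(T_{\lambda})$. A short weight computation gives $\mathrm{wt}(T_{\lambda}) = \lambda$, and $T_{\lambda}$ is the unique tableau annihilated by every $\tilde{e}_i$, since such a tableau must have a lattice reading word of content $\lambda$. As $SSYT(\lambda, k)$ is finite, iterating raising operators from any tableau terminates at this unique $\tilde{e}_i$-highest tableau $T_{\lambda}$, so the set is connected; being also a union of components, it is the single component $C$ through $\mathrm{rd}(T_{\lambda})$. Finally, since $\mathrm{rd}(T_{\lambda})$ is a highest-weight crystal element of weight $\lambda$, the corresponding highest-weight vector of $(V^{\omega_1})^{\otimes |\lambda|}$ generates an irreducible submodule isomorphic to $V^{\lambda}$ whose crystal basis is $C$; the uniqueness in Proposition~\ref{lattice} then provides a crystal isomorphism $C \cong B^{\lambda}$ carrying $\overline{v_{\lambda}}$ to $\mathrm{rd}(T_{\lambda})$. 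Composing with $\mathrm{rd}^{-1}$ yields the sought bijection $\pi$, which satisfies $\pi(\overline{v_{\lambda}}) = T_{\lambda}$ and commutes with every $\tilde{e}_i$ and $\tilde{f}_i$.

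I expect the main obstacle to be the combinatorial closure step --- verifying that the signature rule always alters an entry in a box where semistandardness survives, which is exactly where the right-to-left, top-to-bottom reading convention of the definition is needed --- together with the uniqueness of the $\tilde{e}_i$-highest tableau. Once these combinatorial facts are in hand, the passage to $B^{\lambda}$ is formal, resting only on the tensor product theorem and the uniqueness of crystal bases.
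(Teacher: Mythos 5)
The paper itself offers no proof of Theorem~\ref{crystal}: it is imported verbatim from Kashiwara \cite{Kashiwara} (Theorem 5.1), with the footnote deferring to Theorem 7.3.6 of Hong--Kang \cite{Hong} for admissible labelings and to Kashiwara--Nakashima \cite{KashiwaraN} for the tableau crystal. So your proposal can only be compared with the standard proof in those sources --- and your outline is essentially a reconstruction of it: embed tableaux into $(B^{\omega_1})^{\otimes |\lambda|}$ via the reading word attached to the paper's labeling, observe that the operators in the paper's definition \emph{are} the signature rule on that word (your bracketing convention --- cancel an $i$ followed by an $i+1$, with $\tilde{e}_i$ acting on the last surviving $i+1$ and $\tilde{f}_i$ on the first surviving $i$ --- is indeed the one matching Kashiwara's tensor product rule, as a check on two-letter words confirms), verify closure and connectedness, and identify the component through the reading word of $T_{\lambda}$ with $B^{\lambda}$. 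The two combinatorial facts you defer (that the signature rule preserves semistandardness, and that $T_{\lambda}$ is the unique tableau killed by every $\tilde{e}_i$) are genuinely routine, and you have located correctly where the admissible reading order enters.

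The one step whose justification fails as written is the last one. Proposition~\ref{lattice} is a uniqueness statement about crystal bases \emph{of $V^{\lambda}$ itself}; it cannot, by itself, identify a connected component $C$ of the crystal basis of the highly reducible module $(V^{\omega_1})^{\otimes |\lambda|}$ with $B^{\lambda}$. Two things are being assumed: (i) that the highest-weight \emph{crystal} element $\mathrm{rd}(T_{\lambda})$, killed by all $\tilde{e}_i$ in $L/qL$, lifts to an actual vector of $(V^{\omega_1})^{\otimes |\lambda|}$ killed by all $e_i$; and (ii) that, given such a lift $u$, the intersection of the tensor-product crystal lattice with the submodule $U_q(\mathfrak{sl}_k)u$ is a crystal lattice of that submodule having $C$ as the induced basis of its quotient --- only then is Proposition~\ref{lattice} even applicable. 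Both assertions are exactly the content of the decomposition theorem for crystal bases (Theorem 4.2.10 in Hong--Kang \cite{Hong}): any crystal basis of a finite-dimensional module decomposes, compatibly with a decomposition of the module into irreducibles, as a disjoint union of copies of the $(L^{\mu}, B^{\mu})$. Replacing your appeal to Proposition~\ref{lattice} by an appeal to that theorem closes the argument: the component containing $\mathrm{rd}(T_{\lambda})$ must be isomorphic as a crystal to some $B^{\mu}$, and since $\mathrm{rd}(T_{\lambda})$ is killed by all $\tilde{e}_i$ and has weight $\lambda$, that component is $B^{\lambda}$ with $\mathrm{rd}(T_{\lambda}) \mapsto \overline{v_{\lambda}}$. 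With that single repair, your proof is correct and is the standard one.
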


\subsection{Rectangular tableaux}

Let $a \in \lbrace 1, \ldots, k-1 \rbrace$, and let $m$ be a positive integer.  Set $\lambda := m \omega_a$.  We present two lemmas, the latter of which we appeal to in the following section.  

\begin{lem} \label{expl}
	For all $a \leq a' \leq k-1$, and all sequences of nonnegative integers $(c_a, \ldots, c_{a'})$, set \[T_{(c_a, \ldots, c_{a'})} := \tilde{f}_{a'}^{c_{a'}} \cdots \tilde{f}_a^{c_a} (T_{\lambda}).\]  
	
	Suppose $m \geq c_a \geq \cdots \geq c_{a'} \geq 0$.  Set $c_{a-1} := m$ and $c_{a'+1} := 0$.  Then $T_{(c_a, \ldots, c_{a'})} \in SSYT(\lambda, k)$, and $w_{T_{(c_a, \ldots, c_{a'})}}$ is described by the following conditions: 
	\begin{itemize}
		\item $w_{T_{(c_a, \ldots, c_{a'})}}^{-1}(i) = \lbrace (i-1)m + 1, \ldots, im \rbrace$ for all $1 \leq i \leq a-1$;
		\item $w_{T_{(c_a, \ldots, c_{a'})}}^{-1}(i) = \lbrace (a-1)m + c_i + 1, \ldots, (a-1)m + c_{i-1} \rbrace$ for all $a \leq i \leq a' + 1$;	
	\end{itemize}
\end{lem}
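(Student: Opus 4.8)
The plan is to reduce the whole computation to the single row $a$ of the diagram and then to follow each operator $\tilde f_b$ through the signature rule that defines it. Since $\lambda = m\omega_a$, the associated partition $(\lambda_1, \ldots, \lambda_{k-1})$ is $(m^a)$, an $a \times m$ rectangle, and the highest weight tableau $T_\lambda$ carries the entry $i$ throughout its $i$th row for $1 \leq i \leq a$. We apply only the operators $\tilde f_a, \ldots, \tilde f_{a'}$, all of index $\geq a$, and $\tilde f_b$ involves only boxes with entry $b$ or $b+1$. An entry smaller than $a$ can be raised only by the operator $\tilde f_r$ with $r$ equal to that entry, and no such $\tilde f_r$ with $r < a$ is ever applied; hence the boxes of rows $1, \ldots, a-1$ stay frozen at their values $1, \ldots, a-1$. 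This gives the first displayed formula, and it shows that at every intermediate stage the entries that are $\geq a$ occupy precisely the $m$ boxes of row $a$. In particular the bracketing in the definition of $\tilde f_b$ (for $b \geq a$) sees only row $a$, so it suffices to understand the action on that row.

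Next I would pass to offset coordinates: label the box of row $a$ with original label $\ell$ by its offset $t := \ell - (a-1)m \in \{1, \ldots, m\}$. Because labels increase from right to left, the offset increases to the left, and reading boxes in order of increasing label is reading row $a$ from right to left. I would then isolate the single-row claim: if the boxes of row $a$ with entry $b$ are exactly those of offsets $1, \ldots, N$, with no box of entry $b+1$, then for $0 \leq s < N$ the operator $\tilde f_b$ carries the configuration with offsets $1, \ldots, s$ equal to $b+1$ and offsets $s+1, \ldots, N$ equal to $b$ to the one with offsets $1, \ldots, s+1$ equal to $b+1$. Indeed, keeping only the entries in $\{b, b+1\}$ and reading by increasing offset produces the word $(b+1)^s\, b^{\,N-s}$; since every $b+1$ precedes every $b$, the cancellation step of the definition removes no pair, so $w_{T,b}^{-1}(b)$ is exactly the set of offsets $s+1, \ldots, N$ and $\tilde f_b$ raises its minimum, offset $s+1$. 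Iterating, $\tilde f_b^{\,c_b}$ converts offsets $1, \ldots, c_b$ to $b+1$ whenever $c_b \leq N$.

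Finally I would induct on the top index $b$, with $a \leq b \leq a'$, establishing the asserted description of $T_{(c_a, \ldots, c_b)}$ under the convention $c_{b+1} := 0$. The base case $b = a$ applies the single-row claim to the all-$a$ row, where $N = m = c_{a-1}$ and $c_a \leq m$. For the inductive step, the description of $T_{(c_a, \ldots, c_{b-1})}$ (read with its own convention $c_b = 0$) places entry $b$ precisely on offsets $1, \ldots, c_{b-1}$ and exhibits no $b+1$; so the single-row claim with $N = c_{b-1}$, valid because $c_{b-1} \geq c_b$, shows that $\tilde f_b^{\,c_b}$ turns offsets $1, \ldots, c_b$ into $b+1$ while fixing the lower entries, which is exactly the claimed form for $T_{(c_a, \ldots, c_b)}$. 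Since $c_{b-1} \geq c_b$ guarantees that a box of entry $b$ is available at each of the $c_b$ steps, no application returns $0$, and Theorem~\ref{crystal} then keeps the result in $SSYT(\lambda, k)$, giving the membership assertion. I expect the only real obstacle to be bookkeeping: translating the right-to-left labeling into the offset coordinates and checking that the cancellation in the definition of $\tilde f_b$ never fires, so that each $\tilde f_b$ predictably fills the next box in from the right.
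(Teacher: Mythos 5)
Your proof is correct and takes essentially the same route as the paper's: an outer induction on the top operator index combined with an inner induction on the exponent, where the key point is that the signature-rule cancellation never fires because the newly created $b+1$ entries always occupy smaller labels than the remaining $b$ entries, so each $\tilde f_b$ predictably raises the next box. The paper's proof is simply a terser version of yours, leaving the frozen-rows reduction and the offset bookkeeping implicit.
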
 

\begin{proof}
	We induct on $a'$.  Consider the action of $\tilde{f}_{a'}$ on $T_{(c_a, \ldots, c_{a'-1})}$.  By the inductive hypothesis, $w_{T_{(c_a, \ldots, c_{a'-1})}}^{-1}(a') = \lbrace (a-1)m + 1, \ldots, (a-1)m + c_{a'-1} \rbrace$, and $w_{T_{(c_a, \ldots, c_{a'-1})}}(a'+1)$ is empty.  Therefore, \[S_{a'} = \lbrace (a-1)m + 1, \ldots, (a-1)m + c_{a'-1} \rbrace.\]
	
	Provided $c_{a' - 1} \geq c_{a'} \geq 0$, we see by induction on $c_{a'}$ that $\tilde{f}_{a'}^{c_{a'}}$ changes the entries of $T_{(c_a, \ldots, c_{a'-1})}$ in the boxes labeled $(a-1)m + 1, \ldots, (a-1)m + c_{a'}$ from $a'$ to $a'+1$.  
\end{proof}

\begin{lem} \label{zero}
	The following assertions hold.  
	\begin{enumerate}
		\item For all $a \leq a' \leq k-1$, and all sequences of nonnegative integers $(c_a, \ldots, c_{a'})$, 
		\[\tilde{f}_{a'}^{c_{a'}} \cdots \tilde{f}_a^{c_a} \overline{v_{\lambda}} \] is nonzero if and only if $m \geq c_a \geq \cdots \geq c_{a'} \geq 0$.  
		\item For all $a+1 \leq a' \leq k-2$, and all $0 \leq j \leq m$, 
		\[\tilde{f}_{a'} \tilde{f}_{a'+1}^{j} \cdots \tilde{f}_a^{j} \overline{v_{\lambda}} = 0.\]
		\item For all $0 \leq j \leq m$, 
		\[\tilde{f}_{a-1}^j \tilde{f}_a^j \overline{v_{\lambda}} \neq 0 \quad \text{and} \quad \tilde{f}_{a-1}^{j+1} \tilde{f}_a^j \overline{v_{\lambda}} = 0.\]
		\item For all $0 \leq j \leq m$,
		\[\tilde{f}_a^{m-j} \tilde{f}_{a+1}^{j} \tilde{f}_a^j \overline{v_{\lambda}} \neq 0 \quad \text{and} \quad \tilde{f}_a^{m-j+1} \tilde{f}_{a+1}^{j} \tilde{f}_a^j \overline{v_{\lambda}} = 0.\]
	\end{enumerate}
	
\end{lem}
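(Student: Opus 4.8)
My strategy is to leverage Lemma~\ref{expl}, which gives an explicit combinatorial description of the tableau $T_{(c_a, \ldots, c_{a'})} = \tilde{f}_{a'}^{c_{a'}} \cdots \tilde{f}_a^{c_a}(T_\lambda)$ whenever the exponents form a weakly decreasing sequence bounded by $m$. Since Theorem~\ref{crystal} furnishes a bijection $\pi \colon B^\lambda \to SSYT(\lambda, k)$ commuting with the Kashiwara operators and sending $\overline{v_\lambda}$ to $T_\lambda$, every statement about whether a string $\tilde{f}_{i_r} \cdots \tilde{f}_{i_1} \overline{v_\lambda}$ vanishes translates directly into a question about whether the corresponding tableau operation yields $0$. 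So throughout I would work on the tableau side and repeatedly apply the description of $\tilde{f}_i$: it changes an entry $i \to i+1$ in a box determined by the reading-word bracketing, producing $0$ precisely when $w_{T,i}^{-1}(i)$ is empty.

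\emph{Part (1).} For the forward direction, Lemma~\ref{expl} already asserts $T_{(c_a, \ldots, c_{a'})} \in SSYT(\lambda, k)$ whenever $m \geq c_a \geq \cdots \geq c_{a'} \geq 0$, so that string is nonzero. For the converse I would argue by contradiction: suppose the inequalities fail at some step, i.e. processing left to right we reach a state where either we try to apply $\tilde{f}_i$ more than $m$ times total in row terms or where $c_i > c_{i-1}$. Using the explicit row-content formula $w_{T_{(c_a,\ldots,c_{a'})}}^{-1}(i) = \{(a-1)m + c_i + 1, \ldots, (a-1)m + c_{i-1}\}$, the set $S_{i}$ available for the next $\tilde{f}_{i}$ has size exactly $c_{i-1} - c_i$; once $c_i$ reaches $c_{i-1}$ there are no more $i$-entries to raise, so a further $\tilde{f}_i$ returns $0$. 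This pins down the boundary of nonvanishing at precisely the decreasing condition.

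\emph{Parts (2), (3), (4).} Each of these is a short computation of the same flavor: the inner string $\tilde{f}_{a'+1}^j \cdots \tilde{f}_a^j \overline{v_\lambda}$ (resp. $\tilde{f}_a^j$, resp. $\tilde{f}_{a+1}^j \tilde{f}_a^j$) lands in a known tableau by Lemma~\ref{expl}, and I then read off the content of the relevant rows to decide whether one more application succeeds. For Part (2), after the constant-$j$ string the tableau has $w^{-1}(a'+1) = \{(a-1)m+1, \ldots, (a-1)m+c_{a'}\}$ with $c_{a'}=j=c_{a'-1}$, forcing $S_{a'} = \emptyset$, so $\tilde{f}_{a'}$ kills it. Part (3) uses that $\tilde{f}_{a-1}$ acts in the rows indexed $a-1, a$; after $\tilde{f}_a^j$ the available $(a-1)$-entries number exactly $m$, giving $j$ successful applications up to $j=m$ but no room for a $(j+1)$st when $j=m$ forces the content to be exhausted—here I must track the interaction with the boxes labeled $\{(a-2)m+1,\ldots,(a-1)m\}$ carefully since $\tilde{f}_{a-1}$ sees row $a-1$, which lies just outside the range Lemma~\ref{expl} describes most cleanly. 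Part (4) is the subtlest: after $\tilde{f}_{a+1}^j \tilde{f}_a^j$ I would re-derive the reading word and count the $a$-entries available to $\tilde{f}_a$, showing exactly $m-j$ remain.

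\textbf{The main obstacle.} The genuine difficulty is that Lemma~\ref{expl} is stated only for a single weakly decreasing block of operators $\tilde{f}_{a'}^{c_{a'}} \cdots \tilde{f}_a^{c_a}$, whereas Parts (3) and (4) apply $\tilde{f}_{a-1}$ or a \emph{second} pass of $\tilde{f}_a$ \emph{after} a decreasing block, so the lemma does not apply verbatim. For Part (3) I must analyze the operator $\tilde{f}_{a-1}$, whose active rows are $a-1$ and $a$; since $T_\lambda$ has all of row $a-1$ filled with $(a-1)$'s and row $a$ filled with $a$'s, after $\tilde{f}_a^j$ the bracketing for index $a-1$ must be recomputed, and I expect the $\tilde{f}_{a-1}$'s to cascade into the newly created configuration—this is where I would spend the most care, likely proving an auxiliary content computation. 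For Part (4) the challenge is that $\tilde{f}_a$ is applied both before and after $\tilde{f}_{a+1}^j$, so the two passes interact; I anticipate showing that the first pass and the $\tilde{f}_{a+1}$ pass leave exactly $m-j$ boxes with entry $a$ in the relevant positions, after which the count $m-j+1$ overshoots. Handling these non-monotone strings by a direct reading-word analysis (rather than invoking Lemma~\ref{expl} as a black box) is the crux of the proof.
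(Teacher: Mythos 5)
Your overall strategy coincides with the paper's: transport everything to tableaux via Theorem~\ref{crystal}, use Lemma~\ref{expl} on the weakly decreasing inner blocks, and finish with a direct reading-word (bracketing) analysis. Your treatments of parts (1) and (2) are essentially the paper's arguments (modulo a small imprecision: the set of size $c_{i-1}-c_i$ is $w_{T,i}^{-1}(i)$, the unbracketed $i$-entries, not all of $S_i$). Part (4) is also easier than you anticipate: Lemma~\ref{expl} applies verbatim to the inner block $T_{(j,j)}$, giving $w_{T_{(j,j)}}^{-1}(a) = \lbrace (a-1)m+j+1, \ldots, am \rbrace$ and $w_{T_{(j,j)}}^{-1}(a+1) = \emptyset$; since each $a+1$ created by a successive application of $\tilde{f}_a$ sits at a smaller label than every remaining $a$, no bracketing ever occurs, so exactly $m-j$ applications succeed and the next one annihilates. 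There is no genuine ``interaction'' between the two passes of $\tilde{f}_a$ to worry about.

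The genuine gap is in part (3), at precisely the step you deferred to ``an auxiliary content computation.'' You assert that after $\tilde{f}_a^j$ the \emph{available} $(a-1)$-entries number exactly $m$, and your account of the vanishing (``no room for a $(j+1)$st when $j=m$'') would establish $\tilde{f}_{a-1}^{j+1}\tilde{f}_a^j\overline{v_\lambda}=0$ only at $j=m$, whereas the lemma claims it for \emph{every} $0 \leq j \leq m$ --- and this is the assertion the rest of the paper rests on, since it is what places $v_j$ in $\ker f_{a-1}^{j+1}$ and hence pins down the highest weight $\lambda^j = j\omega_{a-1}+(m-j)\omega_a$ in Proposition~\ref{iso}. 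The correct computation: in $T_{(j)}$ the $(a-1)$-entries occupy labels $(a-2)m+1, \ldots, (a-1)m$ and the $a$-entries occupy labels $(a-1)m+j+1, \ldots, am$ (the labels in between carry $a+1$ and are discarded at the outset); the bracketing rule then cancels each of the $m-j$ entries equal to $a$ against one of the $(a-1)$-entries, so only the $j$ entries at labels $(a-2)m+1, \ldots, (a-2)m+j$ remain in $w_{T,a-1}^{-1}(a-1)$. Thus $\tilde{f}_{a-1}^j$ converts exactly these $j$ boxes, after which every remaining $a-1$ is bracketed by an $a$ below it and the $(j+1)$st application returns $0$ --- for every $j$, not just $j=m$. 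It is this cancellation count, not the raw content count of $m$, that makes part (3) true; as written, your argument for part (3) does not go through.
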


\begin{proof}
	In view of Theorem~\ref{crystal}, it suffices to show each assertion with $T_{\lambda}$ in place of $\overline{v_{\lambda}}$.  We freely apply Lemma~\ref{expl} throughout.  
	\begin{enumerate}
		\item The ``if'' direction is immediate.  
		
		For the ``only if'' direction, assume the contrary.  Set $c_{a-1} := m$, and let $a'$ be minimal for which there exists a sequence $(c_a, \ldots, c_{a'})$ with $c_{a'-1} < c_{a'}$ such that $\tilde{f}_{a'}^{c_{a'}} \cdots \tilde{f}_a^{c_a} \overline{v_{\lambda}} \neq 0$.  
	
		Since $w_{T_{(c_a, \ldots, c_{a'-1}, c_{a'-1})}}^{-1}(a')$ is empty, it follows that $\tilde{f}_{a'}$ vanishes on $T_{(c_a, \ldots, c_{a'-1}, c_{a'-1})}$, so $T_{(c_a, \ldots, c_{a'-1}, c_{a'})} = 0$.  
		
		\item Immediate.  
		
		\item Since $w_{T_{(j)}}^{-1}(a-1) = \lbrace (a-2)m + 1, \ldots, (a-1)m \rbrace$ and $w_{T_{(j)}}^{-1}(a) = \lbrace (a-1)m + j + 1, \ldots, am \rbrace$, we see that $\tilde{f}_{a-1}^j$ changes the entries of $T_{(j)}$ in the boxes labeled $(a-2)m +1, \ldots, (a-2)m + j$ from $a-1$ to $a$, and $\tilde{f}_{a-1}$ annihilates the tableau so obtained.  
		
		\item Since $w_{T_{(j,j)}}^{-1}(a) = \lbrace (a-1)m + j + 1, \ldots, am \rbrace$ and $w_{T_{(j,j)}}^{-1}(a+1)$ is empty, we see that $\tilde{f}_a^{m-j}$ changes the entries of $T_{(j,j)}$ in the boxes labeled $(a-1)m + j + 1, \ldots, am$ from $a$ to $a+1$, and $\tilde{f}_a$ annihilates the tableau so obtained.  
	\end{enumerate}
\end{proof}

\section{Restriction of Global Bases}

\subsection{Restricting $V^{\lambda}$ to $U_q(\mathfrak{sl}_{k-1})$}

We start with two lemmas that assist us in ``lifting'' to $L^{\lambda}$ the assertions in Lemma~\ref{zero} concerning elements of $L^{\lambda}/qL^{\lambda}$.  These lemmas do not depend on $\lambda$ being rectangular.  

\begin{lem} \label{lift}
	Let $N$ be a positive integer, and let $v \in L^{\lambda}$ be a weight vector.  Suppose that there exists a weight vector $v_N \in L^{\lambda}$ such that $e_i v_N = 0$ and $v = f_i^{(N)} v_N$.  If $\overline{v} \neq 0$ and $\tilde{f}_i \overline{v} = 0$, then $\tilde{f}_i v = 0$.  
\end{lem}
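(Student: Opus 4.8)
The plan is to reduce the statement to a purely rank-one assertion about the $\mathfrak{sl}_2$-string generated by $v_N$, and to show that the single number $N$ is forced to equal the $h_i$-weight of $v_N$. Write $\mu := \operatorname{wt}(v_N)$ and $\ell := \mu(h_i) \geq 0$. Since $v = f_i^{(N)} v_N$ with $v_N \in \ker e_i$ and $v_N \in V^{\lambda}_{\operatorname{wt}(v) + N\alpha_i}$, the unique decomposition in the definition of Kashiwara's operators is the one-term decomposition with $u_N = v_N$ and $u_n = 0$ for $n \neq N$. Hence
\[
\tilde{f}_i v = f_i^{(N+1)} v_N, \qquad \tilde{e}_i^{\,N} v = v_N, \qquad \tilde{f}_i^{\,n} v_N = f_i^{(n)} v_N \ \ (n \geq 0).
\]
By the representation theory of $U_q(\mathfrak{sl}_2)$, the $e_i$-highest weight vector $v_N$ of $h_i$-weight $\ell$ satisfies $f_i^{(n)} v_N \neq 0$ for $0 \leq n \leq \ell$ and $f_i^{(\ell+1)} v_N = 0$. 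Therefore $\tilde{f}_i v = f_i^{(N+1)} v_N = 0$ if and only if $N \geq \ell$, and the whole lemma reduces to proving $N = \ell$.

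One inequality is immediate: because $\overline{v} \neq 0$ we have $v \neq 0$, so $f_i^{(N)} v_N \neq 0$, which by the displayed nonvanishing forces $N \leq \ell$. It therefore remains to establish $N \geq \ell$, and this is the only place where the crystal structure (rather than bare $\mathfrak{sl}_2$-theory) is needed; I expect it to be the main obstacle, since a priori $f_i^{(N+1)} v_N$ could be a nonzero vector lying in $qL^{\lambda}$, in which case $\tilde{f}_i \overline{v}$ would vanish without $\tilde{f}_i v$ vanishing. The point is to rule this out.

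To do so, I would argue at the level of $L^{\lambda}/qL^{\lambda}$. Since $\tilde{e}_i v_N = 0$ (the decomposition of $v_N$ has its only term in degree $0$), we get $\tilde{e}_i \overline{v_N} = 0$. Decompose $\overline{v_N} = \sum_j c_j b_j$ with $b_j \in B^{\lambda}_{\mu}$ distinct. Applying $\tilde{e}_i$ and using that $\tilde{e}_i$ maps distinct elements of $B^{\lambda}$ to distinct (hence linearly independent) elements of $B^{\lambda} \cup \{0\}$, the relation $\sum_j c_j \tilde{e}_i b_j = 0$ forces $c_j = 0$ whenever $\tilde{e}_i b_j \neq 0$; thus only $i$-highest basis vectors survive. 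Each surviving $b_j$ has $h_i$-weight $\mu(h_i) = \ell$, so its $i$-string $b_j, \tilde{f}_i b_j, \dots, \tilde{f}_i^{\,\ell} b_j$ consists of $\ell+1$ nonzero elements of $B^{\lambda}$ (by the crystal-basis axioms, cf. Hong--Kang \cite{Hong}). For each fixed $n$ with $0 \leq n \leq \ell$, the vectors $\tilde{f}_i^{\,n} b_j$ are pairwise distinct (since $\tilde{e}_i^{\,n}\tilde{f}_i^{\,n} b_j = b_j$ recovers the index $j$), hence linearly independent, so $\tilde{f}_i^{\,n}\overline{v_N} = \sum_j c_j \tilde{f}_i^{\,n} b_j \neq 0$ as $\overline{v_N} \neq 0$. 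In particular, if $N < \ell$ then $0 \leq N+1 \leq \ell$, whence
\[
\tilde{f}_i \overline{v} = \overline{\tilde{f}_i v} = \overline{f_i^{(N+1)} v_N} = \tilde{f}_i^{\,N+1} \overline{v_N} \neq 0,
\]
contradicting the hypothesis $\tilde{f}_i \overline{v} = 0$. Therefore $N \geq \ell$, so $N = \ell$ and $\tilde{f}_i v = f_i^{(\ell+1)} v_N = 0$, as desired. (Note $\overline{v_N} \neq 0$ is itself forced, since $\overline{v} = \tilde{f}_i^{\,N}\overline{v_N}$ would otherwise vanish.)
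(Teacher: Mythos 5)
Your proof is correct, but it takes a genuinely different route from the paper's, which is a two-line contrapositive: if $\tilde{f}_i v = f_i^{(N+1)} v_N$ were nonzero, then by uniqueness of the Kashiwara decomposition $\tilde{e}_i \tilde{f}_i v = f_i^{(N)} v_N = v$; since $\tilde{e}_i$ and $\tilde{f}_i$ commute with reduction modulo $qL^{\lambda}$ (the crystal-lattice axiom), this gives $\overline{v} = \tilde{e}_i \tilde{f}_i \overline{v} = \tilde{e}_i(0) = 0$, contradicting $\overline{v} \neq 0$. Note that this disposes of exactly the scenario you flag as the main obstacle ($f_i^{(N+1)} v_N$ nonzero but lying in $qL^{\lambda}$): in that case $v = \tilde{e}_i \tilde{f}_i v \in \tilde{e}_i(qL^{\lambda}) \subset qL^{\lambda}$, so $\overline{v} = 0$ anyway. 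The paper's argument thus uses nothing beyond the definition of the operators and the lattice axioms---it never touches the basis $B^{\lambda}$. Your argument instead pins down $N = \mu(h_i)$ exactly, by expanding $\overline{v_N}$ in $B^{\lambda}_{\mu}$ and using the fact that an $i$-highest element of $B^{\lambda}$ of weight $\mu$ heads an $i$-string of exactly $\mu(h_i)+1$ nonzero elements. That fact is true and standard, but one quibble: it is not a consequence of the crystal-basis \emph{axioms} alone, as your parenthetical suggests; it requires the uniqueness/decomposition theorem for crystal bases of integrable modules (restrict to the $i$-th copy of $U_q(\mathfrak{sl}_2)$, under which $(L^{\lambda}, B^{\lambda})$ decomposes as a direct sum of standard rank-one crystals; cf. Hong--Kang \cite{Hong}, Section 4.2). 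So your proof invokes strictly heavier machinery; what it buys is more precise information (the equality $N = \mu(h_i)$, i.e., the hypotheses force $v$ to be the bottom of its $i$-string) and an explicit localization of the potential failure mode, which the paper's identity $v = \tilde{e}_i \tilde{f}_i v$ handles invisibly.
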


\begin{proof}
	Assume the contrary.  Since $\tilde{f}_i v = f_i^{(N+1)} v_N$ is nonzero, it follows that $v = \tilde{e}_i \tilde{f}_i v$, whence $\overline{v} = \tilde{e}_i \tilde{f}_i \overline{v} = 0$.  
\end{proof}

\begin{lem} \label{convert}
	For all sequences of nonnegative integers $(c_1, \ldots, c_{k-1})$, 
	\[\tilde{f}_{k-1}^{c_{k-1}} \cdots \tilde{f}_1^{c_1} v_{\lambda} = f_{k-1}^{(c_{k-1})} \cdots f_1^{(c_1)} v_{\lambda}.\]
\end{lem}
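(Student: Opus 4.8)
The plan is to induct on $j$ and show that $\tilde{f}_j^{c_j} \cdots \tilde{f}_1^{c_1} v_{\lambda} = f_j^{(c_j)} \cdots f_1^{(c_1)} v_{\lambda}$ for every $0 \leq j \leq k-1$; the case $j = k-1$ is the assertion. Write $u_j := f_j^{(c_j)} \cdots f_1^{(c_1)} v_{\lambda}$, with $u_0 := v_{\lambda}$. The engine of the argument is the elementary fact that Kashiwara's operator $\tilde{f}_i$ coincides with the divided power $f_i^{(\,\cdot\,)}$ on vectors annihilated by $e_i$: if $u$ is a weight vector with $e_i u = 0$, then $\tilde{f}_i^{\,c} u = f_i^{(c)} u$ for all $c \geq 0$.

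First I would establish this fact. By the definition of $\tilde{f}_i$, a vector $u$ with $e_i u = 0$ has the trivial string decomposition $u = u_0$ (that is, $N = 0$, with $u_0 = u$), so $\tilde{f}_i u = f_i^{(1)} u_0 = f_i u$. For the inductive step, note that for $u \in \ker e_i$ the vector $f_i^{(c)} u$ is already in the canonical form $\sum_n f_i^{(n)} w_n$ with $w_c = u \in \ker e_i$ and all other $w_n = 0$; by the uniqueness of the string decomposition in the definition of the Kashiwara operators (cf. Lemma 4.1.1 in Hong--Kang \cite{Hong}), this \emph{is} its decomposition, whence $\tilde{f}_i(f_i^{(c)} u) = f_i^{(c+1)} u$. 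Iterating, $\tilde{f}_i^{\,c} u = f_i^{(c)} u$. The degenerate case $f_i^{(c)} u = 0$ is harmless, since then $f_i^{(c+1)} u = \frac{1}{[c+1]_q} f_i \cdot f_i^{(c)} u = 0$ as well.

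Next I would run the induction on $j$. The base case $j = 0$ is immediate, both sides equalling $v_{\lambda}$. For the inductive step I would first verify the hypothesis $e_{j+1} u_j = 0$. Since $e_{j+1}$ commutes with each of $f_1, \ldots, f_j$ --- for $i \leq j$ we have $i \neq j+1$, so relation (3) in the definition of $U_q(\mathfrak{sl}_k)$ gives $e_{j+1} f_i = f_i e_{j+1}$ --- I may slide $e_{j+1}$ past every divided power in $u_j$ to obtain $e_{j+1} u_j = f_j^{(c_j)} \cdots f_1^{(c_1)} e_{j+1} v_{\lambda} = 0$, using that $v_{\lambda}$ is highest weight. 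Applying the fact above with $i = j+1$ and $u = u_j$, together with the inductive hypothesis $\tilde{f}_j^{c_j} \cdots \tilde{f}_1^{c_1} v_{\lambda} = u_j$, yields $\tilde{f}_{j+1}^{c_{j+1}} \cdots \tilde{f}_1^{c_1} v_{\lambda} = \tilde{f}_{j+1}^{c_{j+1}} u_j = f_{j+1}^{(c_{j+1})} u_j = u_{j+1}$, completing the step.

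I do not expect a serious obstacle: the only point requiring care is the appeal to uniqueness of the string decomposition when proving the auxiliary fact, since $\tilde{f}_i$ is defined through that decomposition rather than by a closed formula. I note that nothing in the argument uses the hypothesis $\lambda = m\omega_a$ beyond $e_i v_{\lambda} = 0$, so the identity in fact holds for every $\lambda \in P^+$.
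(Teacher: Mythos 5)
Your proof is correct and is essentially the paper's own argument: the paper's entire proof is the one-line observation that $f_{i-1}^{(c_{i-1})} \cdots f_1^{(c_1)} v_{\lambda} \in \ker e_i$ for all $i$, which is exactly your key step, with the fact that $\tilde{f}_i^{\,c}$ coincides with $f_i^{(c)}$ on $\ker e_i$ left implicit. You have simply supplied the details (the string-decomposition argument and the commutation $e_{j+1} f_i = f_i e_{j+1}$ for $i \leq j$) that the paper takes for granted.
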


\begin{proof}
	Note that $f_{i-1}^{(c_{i-1})} \cdots f_1^{(c_1)} v_{\lambda} \in \ker e_i$ for all $1 \leq i \leq k-1$.  
\end{proof}

For the remainder of the section, set $\lambda := m \omega_a$.  

\begin{prop} \label{tech}
	The following assertions hold.  
	\begin{enumerate}
		\item For all $a \leq a' \leq k-1$, and all sequences of nonnegative integers $(c_a, \ldots, c_{a'})$, \[f_{a'}^{(c_{a'})} \cdots f_a^{(c_a)} v_{\lambda} \] is nonzero if and only if $m \geq c_a \geq \cdots \geq c_{a'} \geq 0$.  
		\item For all $a \leq a' \leq k-2$, and all $0 \leq j \leq m$,
		\[e_{a'} f_{a'+1}^{(j)} \cdots f_a^{(j)} v_{\lambda} = 0.\]
		\item For all $a+1 \leq a' \leq k-2$, and all $0 \leq j \leq m$,
		\[f_{a'}f_{a'+1}^{(j)} \cdots f_a^{(j)} v_{\lambda} = 0.\]  
		\item $f_{a-1}^{(j+1)} f_{a}^{(j)} v_{\lambda} = 0$ for all $0 \leq j \leq m$.  
		\item $f_{a}^{(m-j+1)} f_{a+1}^{(j)} f_{a}^{(j)} v_{\lambda} = 0$ for all $0 \leq j \leq m$.  
	\end{enumerate}
\end{prop}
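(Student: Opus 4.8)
The plan is to reduce every assertion to the representation theory of the copy of $U_q(\mathfrak{sl}_2)$ generated by $e_i, f_i, q^{\pm h_i}$ for a single index $i$. The one input I use repeatedly is standard (it is the content of the decomposition in the definition of $\tilde e_i, \tilde f_i$; cf. Lemma 4.1.1 in Hong--Kang \cite{Hong}): if $w \in V^{\lambda}$ is a nonzero weight vector with $e_i w = 0$ and $\langle \mathrm{wt}(w), h_i\rangle = N$, then $f_i^{(n)} w \neq 0$ for $0 \leq n \leq N$, while $f_i^{(N+1)} w = 0$, and moreover $e_i f_i^{(n)} w = [N-n+1]_q\, f_i^{(n-1)} w$. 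With this in hand the whole proposition becomes bookkeeping: at each step I must commute $e_i$ to the right to exhibit a $\ker e_i$ vector, and then compute its $h_i$-eigenvalue $N$ from the weight. I would prove (1) first and use it as the engine for (2)--(5).

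For (1) I induct on $a'$. Writing $w := f_{a'-1}^{(c_{a'-1})} \cdots f_a^{(c_a)} v_{\lambda}$ (with the conventions $c_{a-1} := m$ and $w := v_{\lambda}$ when $a' = a$), the defining relations show that $e_{a'}$ commutes past every factor $f_a, \ldots, f_{a'-1}$ --- at distance $>1$ by relation (5), and at the adjacent index $a'-1$ by relation (3), since $\delta_{a', a'-1} = 0$ --- so $e_{a'} w = 0$. A short Cartan-matrix computation then gives $\langle \mathrm{wt}(w), h_{a'}\rangle = c_{a'-1}$: one has $\langle \omega_a, h_{a'}\rangle = 0$, and among the factor indices $i \leq a'-1$ only $i = a'-1$ contributes to $\langle \mathrm{wt}(w), h_{a'}\rangle$, via $\langle \alpha_{a'-1}, h_{a'}\rangle = -1$ (the base case records $\langle \lambda, h_a\rangle = m = c_{a-1}$). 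By the $\mathfrak{sl}_2$-string fact, if $w \neq 0$ then $f_{a'}^{(c_{a'})} w \neq 0$ exactly when $c_{a'} \leq c_{a'-1}$, and if $w = 0$ the product vanishes; chaining these equivalences over $a, a+1, \ldots, a'$ yields the staircase condition $m \geq c_a \geq \cdots \geq c_{a'} \geq 0$.

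For (2) I commute $e_{a'}$ past $f_{a'+1}^{(j)}$ (again $\delta_{a', a'+1} = 0$) and apply the commutation formula to $e_{a'} f_{a'}^{(j)} w'$, where $w' := f_{a'-1}^{(j)} \cdots f_a^{(j)} v_{\lambda} \in \ker e_{a'}$ is nonzero by (1); this exhibits $e_{a'} f_{a'+1}^{(j)} \cdots f_a^{(j)} v_{\lambda}$ as a scalar multiple of $f_{a'+1}^{(j)} f_{a'}^{(j-1)} f_{a'-1}^{(j)} \cdots f_a^{(j)} v_{\lambda}$, which vanishes by (1) because the exponent sequence $(j, \ldots, j, j-1, j)$ violates the staircase at position $a'$. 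Parts (3), (4), (5) then reduce to reading off one $h_i$-eigenvalue. For (3), the vector $u := f_{a'+1}^{(j)} \cdots f_a^{(j)} v_{\lambda}$ lies in $\ker e_{a'}$ by (2), and $\langle \mathrm{wt}(u), h_{a'}\rangle = -j\langle \alpha_{a'-1} + \alpha_{a'} + \alpha_{a'+1}, h_{a'}\rangle = 0$, forcing $f_{a'} u = 0$. For (4), $f_a^{(j)} v_{\lambda} \in \ker e_{a-1}$ has $h_{a-1}$-eigenvalue $j$, so $f_{a-1}^{(j+1)} f_a^{(j)} v_{\lambda} = 0$. For (5), the crux is that $u := f_{a+1}^{(j)} f_a^{(j)} v_{\lambda}$ lies in $\ker e_a$ --- which is precisely (2) with $a' = a$ --- and has $h_a$-eigenvalue $m-j$, whence $f_a^{(m-j+1)} u = 0$.

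I expect the main obstacle to be (2), together with the $\ker e_a$ claim underlying (5): a naive use of the commutation formula on $e_a f_{a+1}^{(j)} f_a^{(j)} v_{\lambda}$ produces the term $[m-j+1]_q\, f_{a+1}^{(j)} f_a^{(j-1)} v_{\lambda}$, which looks nonzero but is in fact annihilated by (1), so the nonvanishing criterion of (1) must already be available before (2) and (5) can be closed. I note that (4) and (5) can alternatively be obtained exactly along the route the scaffolding suggests: Lemma~\ref{convert} rewrites the relevant divided-power products as products of Kashiwara operators on $v_{\lambda}$, Lemma~\ref{zero}(3),(4) supplies the vanishing modulo $qL^{\lambda}$, and Lemma~\ref{lift} lifts it to genuine vanishing in $L^{\lambda}$; the $\mathfrak{sl}_2$-string computation above is simply the weight-theoretic shortcut that bypasses the passage through $L^{\lambda}/qL^{\lambda}$.
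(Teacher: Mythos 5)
Your proof is correct, but for assertions (1), (3), (4), and (5) it takes a genuinely different route from the paper's. The paper stays inside the crystal machinery: it proves the corresponding vanishing/nonvanishing statements for Kashiwara operators acting on $L^{\lambda}/qL^{\lambda}$ via the tableau model (Lemma~\ref{zero}), converts divided-power products into Kashiwara-operator products (Lemma~\ref{convert}), and then lifts vanishing from the quotient back up to $V^{\lambda}$ (Lemma~\ref{lift}). You replace all of that with classical $U_q(\mathfrak{sl}_2)$-string theory: exhibit each relevant vector as one annihilated by the appropriate $e_i$, compute its $h_i$-eigenvalue $N$ from the weight, and invoke the fact that $f_i^{(n)}$ kills a primitive weight vector exactly when $n > N$. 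Your induction for (1) is sound (the prefix is either a non-staircase, hence zero by induction, or a nonzero $e_{a'}$-primitive vector of $h_{a'}$-weight $c_{a'-1}$, so the string criterion chains correctly), and your proof of (2) coincides with the paper's --- the same identity $e_{a'} f_{a'}^{(j)} = f_{a'}^{(j)} e_{a'} + f_{a'}^{(j-1)} [q^{h_{a'}};1-j]_q$ followed by an appeal to assertion (1). What your route buys is economy and self-containment: Lemmas~\ref{zero}, \ref{lift}, and \ref{convert} become unnecessary for this proposition, and the somewhat delicate lifting step is bypassed entirely. What the paper's route buys is uniformity with the rest of the argument: the tableau computations (Lemma~\ref{expl}) must be built anyway for Lemma~\ref{in}, so the paper extracts Proposition~\ref{tech} from machinery it already needs. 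Two trivial slips you should fix: commutation of $e_{a'}$ with $f_i$ for \emph{every} $i \neq a'$ follows from relation (3) alone --- relation (5) concerns $e$-$e$ and $f$-$f$ pairs, so citing it for the distance-$>1$ case is a misattribution; and the string fact you invoke is not quite Lemma 4.1.1 of Hong--Kang (which gives the primitive-vector decomposition) but rather the standard $U_q(\mathfrak{sl}_2)$ facts $e_i^{(n)} f_i^{(n)} w = \begin{bsmallmatrix} N \\ n \end{bsmallmatrix}_q w$ and $f_i^{(N+1)} w = 0$ for a primitive weight vector $w$ of weight $N$ in an integrable module --- both routine, and neither affects correctness.
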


\begin{proof}
	We freely apply Lemmas~\ref{zero}, \ref{lift}, and \ref{convert} throughout.  
	\begin{enumerate}
		\item The ``if'' direction is immediate.  
		
		For the ``only if'' direction, assume the contrary.  Set $c_{a-1} := m$, and let $a'$ be minimal for which there exists a sequence $(c_a, \ldots, c_{a'})$ with $c_{a'-1} < c_{a'}$ such that $f_{a'}^{(c_{a'})} \cdots f_a^{(c_a)} v_{\lambda} \neq 0$.  
		
		Note that \[\tilde{f}_{a'}^{c_{a'-1}} \tilde{f}_{a'-1}^{c_{a'-1}} \cdots \tilde{f}_a^{c_a} \overline{v_{\lambda}} \neq 0 \quad \text{and} \quad \tilde{f}_{a'}^{c_{a'-1}+1} \tilde{f}_{a'-1}^{c_{a'-1}} \cdots \tilde{f}_a^{c_a} \overline{v_{\lambda}} = 0.\]
		
		\item For all $a \leq a' \leq k-1$, set $v_{a', j} := f_{a'}^{(j)} \cdots f_{a}^{(j)} v_{\lambda}$, and set $v_{a-1,j} := v_{\lambda}$.  From the identity $e_{a'} f_{a'}^{(j)} = f_{a'}^{(j)} e_{a'} + f_{a'}^{(j-1)} [q^{h_{a'}};1-j]_q$ (cf. Lemma 3.2.5 in Hong--Kang \cite{Hong}), we see that 
		\begin{align*}
		e_{a'} v_{a'+1,j} & = f_{a'+1}^{(j)} e_{a'} f_{a'}^{(j)} v_{a'-1,j} \\
		& = f_{a'+1}^{(j)} f_{a'}^{(j)} e_{a'} v_{a'-1,j}  + f_{a'+1}^{(j)} f_{a'}^{(j-1)} [q^{h_{a'}};1-j]_q v_{a'-1,j}.  
		\end{align*}

		The first summand is zero.  Since $[q^{h_{a'}}; 1-j]_q$ acts as a scalar in $\mathbb{Q}(q)$ on the weight vector $v_{a'-1,j}$, and $f_{a'+1}^{(j)} f_{a'}^{(j-1)} v_{a'-1,j} = 0$ by assertion (1), it follows that the second summand is also zero.  
		
		\item Since $e_{a'} v_{a'+1,j} = 0$ by assertion (2), it follows that $\tilde{f}_{a'} v_{a'+1,j}= f_{a'} v_{a'+1,j}$.  Note that $\overline{v_{a'+1,j}} \neq 0$ and $\tilde{f}_{a'} \overline{v_{a'+1,j}} = 0$.  
		
		\item Since $e_{a-1} v_{a,j} = 0$, it follows that $\tilde{f}_{a-1}^{j+1} v_{a,j} = f_{a-1}^{(j+1)} v_{a,j}$.  Note that $\tilde{f}_{a-1}^{j} \overline{v_{a,j}} \neq 0$ and $\tilde{f}_{a-1}^{j+1} \overline{v_{a,j}} = 0$.  
		
		\item Since $e_a v_{a+1,j} = 0$ by assertion (2), it follows that $\tilde{f}_a^{m-j+1} v_{a+1,j} = f_a^{(m-j+1)} v_{a+1,j}$.  Note that $\tilde{f}_a^{m-j} \overline{v_{a+1,j}} \neq 0$ and $\tilde{f}_a^{m-j+1} \overline{v_{a+1,j}} = 0$.  

	\end{enumerate}
\end{proof}

The following proposition follows immediately from Proposition~\ref{tech}.  
\begin{prop} \label{vanish}
	For all $0 \leq j \leq m$, set $v_{j} := f_{k-1}^{(j)} \cdots f_{a}^{(j)} v_{\lambda}$.  Then, for all $0 \leq j \leq m$, the following conditions hold:
	\begin{enumerate}
		\item $v_{j} \neq 0$;
		\item $v_{j} \in \ker e_1 \cap \cdots \cap \ker e_{k-2}$;
		\item $v_{j} \in \ker f_1 \cap \cdots \cap \ker f_{a-2} \cap \ker f_{a+1} \cap \cdots \cap \ker f_{k-2}$;
		\item $v_{j} \in \ker f_{a-1}^{j+1} \cap \ker f_{a}^{m-j+1}$.  
	\end{enumerate}
\end{prop}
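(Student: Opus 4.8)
The plan is to obtain each of the four conditions by pushing the relevant generator through the ascending block $f_a^{(j)}, \ldots, f_{k-1}^{(j)}$ that defines $v_j$ until it lands on an expression annihilated by one of the five assertions of Proposition~\ref{tech}. The two commutation facts I would invoke throughout are that $e_i$ commutes with $f_{i'}$ whenever $i \neq i'$ (relation (3) of the definition of $U_q(\mathfrak{sl}_k)$) and that $f_i$ commutes with $f_{i'}$ whenever $|i - i'| > 1$ (relation (5)); both pass to arbitrary powers, ordinary or divided, since $f_i^{(n)}$ is a scalar multiple of $f_i^n$.

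For condition (1), I would note that $v_j$ is exactly the vector of Proposition~\ref{tech}(1) with $a' = k-1$ and $c_a = \cdots = c_{k-1} = j$, so the hypothesis $0 \leq j \leq m$ yields the chain $m \geq j \geq \cdots \geq j \geq 0$ and hence nonvanishing. For condition (2), given $1 \leq a' \leq k-2$, I would split on whether $a' < a$, in which case $e_{a'}$ sweeps past the entire block (each factor has index $\geq a > a'$) to reach $e_{a'} v_{\lambda} = 0$, or $a \leq a' \leq k-2$, in which case $e_{a'}$ clears the factors of index $\geq a'+2$ and leaves $f_{k-1}^{(j)} \cdots f_{a'+2}^{(j)} \cdot e_{a'} f_{a'+1}^{(j)} \cdots f_a^{(j)} v_{\lambda}$, which vanishes by Proposition~\ref{tech}(2). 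Condition (3) is handled identically: for $a' \leq a-2$ the generator $f_{a'}$ commutes past the whole block (each index $i \geq a$ satisfies $|a'-i| \geq 2$) to reach $f_{a'} v_{\lambda} = 0$, valid since $\langle \lambda, h_{a'}\rangle = m\delta_{a,a'} = 0$, while for $a+1 \leq a' \leq k-2$ it clears the factors of index $\geq a'+2$ and leaves the expression $f_{a'} f_{a'+1}^{(j)} \cdots f_a^{(j)} v_{\lambda}$ killed by Proposition~\ref{tech}(3).

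For condition (4), I would commute $f_{a-1}^{j+1}$ past the factors of index $\geq a+1$ to isolate $f_{a-1}^{j+1} f_a^{(j)} v_{\lambda}$, and commute $f_a^{m-j+1}$ past the factors of index $\geq a+2$ to isolate $f_a^{m-j+1} f_{a+1}^{(j)} f_a^{(j)} v_{\lambda}$; rewriting the ordinary powers as scalar multiples of the divided powers $f_{a-1}^{(j+1)}$ and $f_a^{(m-j+1)}$ then invokes Proposition~\ref{tech}(4) and (5), respectively. I do not anticipate any genuine obstacle: because $v_j$ is built from a single block of adjacent indices, each generator clears all but the one or two factors nearest its own index, landing precisely on a form treated in Proposition~\ref{tech}. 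The entire content resides in that proposition, and the only care needed is the elementary index bookkeeping in the commutations together with the harmless rescaling by $[n]_q! \in \mathbb{Q}(q)^{\times}$ between ordinary and divided powers.
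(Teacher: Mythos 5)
Your proposal is correct and is essentially the paper's own argument: the paper simply states that Proposition~\ref{vanish} ``follows immediately from Proposition~\ref{tech},'' and your commutation bookkeeping (passing $e_i$ or $f_i$ through the block of $f$-factors using relations (3) and (5), then landing on one of the five assertions of Proposition~\ref{tech}, with the harmless rescaling between $f_i^n$ and $f_i^{(n)}$) is exactly what that ``immediately'' consists of. The one detail not literally contained in Proposition~\ref{tech} --- that $f_{a'} v_{\lambda} = 0$ for $a' \leq a-2$ because $\lambda(h_{a'}) = 0$ --- you handle correctly via the standard fact $f_i^{\lambda(h_i)+1} v_{\lambda} = 0$ for integrable highest weight modules, which the paper also implicitly uses.
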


We are finally ready to describe explicitly the decomposition of $V^{\lambda}$ as a direct sum of irreducible $U_q(\mathfrak{sl}_{k-1})$-representations.  

\begin{prop} \label{iso}
	For all $0 \leq j \leq m$, the $U_q(\mathfrak{sl}_{k-1})$-subrepresentation $U_q(\mathfrak{sl}_{k-1}) v_{j}$ is isomorphic to the irreducible $U_q(\mathfrak{sl}_{k-1})$-representation with highest weight $\lambda^j := j \omega_{a-1} + (m-j) \omega_a$.  
\end{prop}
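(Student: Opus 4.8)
The plan is to exhibit $v_j$ as a highest weight vector for $U_q(\mathfrak{sl}_{k-1})$ of weight $\lambda^j$ and then to argue that the cyclic module it generates is irreducible. Throughout I take $U_q(\mathfrak{sl}_{k-1}) \subset U_q(\mathfrak{sl}_k)$ to be the subalgebra generated by $e_i, f_i \ (1 \leq i \leq k-2)$ and $q^h \ (h \in P^{\vee})$, so that its raising operators are precisely $e_1, \ldots, e_{k-2}$. First I would record that $v_j$ is a highest weight vector: by Proposition~\ref{vanish}, $v_j \neq 0$ and $e_i v_j = 0$ for all $1 \leq i \leq k-2$. Invoking the triangular decomposition $U_q(\mathfrak{sl}_{k-1}) = U^- U^0 U^+$, where $U^{\pm}$ is generated by the $f_i$ (resp.\ $e_i$) for $1 \leq i \leq k-2$, the fact that $U^+$ and $U^0$ fix $v_j$ up to scalars yields $U_q(\mathfrak{sl}_{k-1}) v_j = U^- v_j$; in particular $M := U_q(\mathfrak{sl}_{k-1}) v_j$ is a highest weight module whose $\lambda^j$-weight space is one-dimensional, spanned by $v_j$, since $U^-$ strictly lowers weights.

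Next I would compute the weight. As $v_j = f_{k-1}^{(j)} \cdots f_a^{(j)} v_{\lambda}$ and each $f_i^{(j)}$ lowers the weight by $j\alpha_i$, the weight of $v_j$ is $\mu = m\omega_a - j(\alpha_a + \alpha_{a+1} + \cdots + \alpha_{k-1})$. Pairing $\mu$ against the coroots $h_1, \ldots, h_{k-2}$ by means of $\alpha_l(h_i) = 2\delta_{li} - \delta_{l,i+1} - \delta_{l,i-1}$ is a short finite computation that gives $\mu(h_{a-1}) = j$, $\mu(h_a) = m-j$, and $\mu(h_i) = 0$ for all remaining $1 \leq i \leq k-2$. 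Hence, as a weight of $\mathfrak{sl}_{k-1}$, the weight of $v_j$ is exactly $\lambda^j = j\omega_{a-1} + (m-j)\omega_a$ (under the conventions $\omega_0 = 0$ when $a=1$ and $\omega_{k-1}$ restricting to $0$ when $a = k-1$), which is dominant because $j \geq 0$ and $m - j \geq 0$.

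The crux is then to upgrade ``highest weight module'' to ``irreducible.'' Since $V^{\lambda}$ is finite-dimensional, its restriction to $U_q(\mathfrak{sl}_{k-1})$ is a finite-dimensional type $1$ representation and hence completely reducible (cf.\ Hong--Kang \cite{Hong}), and therefore so is its submodule $M$. Writing $M = \bigoplus_i M_i$ as a direct sum of irreducibles with highest weights $\mu_i$ and decomposing $v_j = \sum_i w_i$ with $w_i \in M_i$, each nonzero $w_i$ is annihilated by $e_1, \ldots, e_{k-2}$ and has weight $\lambda^j$; as the only such vectors in the irreducible $M_i$ are the scalar multiples of its highest weight vector, this forces $\mu_i = \lambda^j$ and $M_i = U_q(\mathfrak{sl}_{k-1}) w_i$. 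Thus $M = \bigoplus_{i : w_i \neq 0} M_i$ with every $\mu_i = \lambda^j$, so $\dim M_{\lambda^j}$ equals the number of summands; since $\dim M_{\lambda^j} = 1$ by the first paragraph, there is a single summand, and $M \cong V^{\lambda^j}$ by the classification of irreducible $U_q(\mathfrak{sl}_{k-1})$-representations (cf.\ Hong--Kang \cite{Hong}, Corollary 3.4.8).

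I expect this final irreducibility step to be the main obstacle: the highest-weight property and the weight computation are direct consequences of Proposition~\ref{vanish} and the Cartan matrix, whereas ruling out the a priori possibility that $v_j$ spreads across several isomorphic summands requires both complete reducibility of the restriction and the one-dimensionality of the top weight space of a cyclic highest weight module.
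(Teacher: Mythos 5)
Your proof is correct, but the decisive step --- irreducibility of $U_q(\mathfrak{sl}_{k-1})v_j$ --- is obtained by a genuinely different mechanism than in the paper. The paper computes the weight just as you do (though via the identity $m\omega_a - j\alpha_a - \cdots - j\alpha_{k-1} = j\omega_{a-1} + (m-j)\omega_a + jE_k$ together with the vanishing of $E_k$ in the weight lattice of $\mathfrak{sl}_{k-1}$, rather than by pairing against coroots), but it then uses \emph{all} of Proposition~\ref{vanish}: parts (3) and (4) say precisely that $f_i^{\lambda^j(h_i)+1}v_j = 0$ for every $1 \leq i \leq k-2$, and Corollary 3.4.7 of Hong--Kang \cite{Hong} is the criterion that a highest weight module whose highest weight vector satisfies these relations is isomorphic to the irreducible module of that highest weight. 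So the paper's irreducibility comes from this integrability criterion for highest weight modules, with no appeal to semisimplicity; indeed this is the reason parts (3)--(5) of Proposition~\ref{tech} were proved at all. You use only parts (1) and (2) of Proposition~\ref{vanish} --- that $v_j$ is a nonzero singular vector --- and instead derive irreducibility from complete reducibility of the restriction (a finite-dimensional $U_q(\mathfrak{sl}_{k-1})$-representation admitting a weight space decomposition) combined with the one-dimensionality of the top weight space of a cyclic highest weight module. Both routes are sound: yours leans on a larger general theorem (semisimplicity, which the paper does invoke anyway in proving Theorem~\ref{decomp}) but needs less of the paper's technical preparation; the paper's route requires the extra annihilation identities but pins down the isomorphism type directly from the cyclic generator. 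One cosmetic remark: the subalgebra you fix, with torus $q^h$ for all $h \in P^{\vee}$, is strictly larger than the usual embedded copy of $U_q(\mathfrak{sl}_{k-1})$, which is generated by $e_i, f_i, q^{\pm h_i}$ for $1 \leq i \leq k-2$; since $v_j$ is a weight vector, both subalgebras generate the same subspace, so this convention does not affect your argument.
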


\begin{proof}
	Note that $q^h v_j = q^{m \omega_a(h)-j\alpha_{a}(h) - \cdots - j\alpha_{k-1}(h)} v_j$ for all $h \in P^{\vee}$.  Since \[m \omega_a - j \alpha_{a} - \cdots - j \alpha_{k-1} = j \omega_{a-1} + (m-j) \omega_a + j E_k,\] 
	and the image of $E_k$ in the weight lattice of $\mathfrak{sl}_{k-1}$ is zero, $v_j$ belongs to the $\lambda^j$-weight space of the restriction of $V^{\lambda}$ to $U_q(\mathfrak{sl}_{k-1})$.  In view of Proposition~\ref{vanish}, the conclusion follows from Corollary 3.4.7 in Hong--Kang \cite{Hong}.  
\end{proof}

Given a dominant weight $\widehat{\lambda}$ of $\mathfrak{sl}_{k-1}$, we write $\widehat{V}^{\widehat{\lambda}}$ for the irreducible $U_q(\mathfrak{sl}_{k-1})$-representation with highest weight $\widehat{\lambda}$.  

\begin{thm} \label{decomp}
	$V^{\lambda} = \bigoplus_{j=0}^m U_q(\mathfrak{sl}_{k-1}) v_{j}$.  
\end{thm}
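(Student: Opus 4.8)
The plan is to combine Proposition~\ref{iso} with a dimension count. Write $W_j := U_q(\mathfrak{sl}_{k-1}) v_j$ for $0 \leq j \leq m$. By Proposition~\ref{iso}, each $W_j$ is an irreducible $U_q(\mathfrak{sl}_{k-1})$-representation of highest weight $\lambda^j = j \omega_{a-1} + (m-j)\omega_a$, so in particular $W_j \cong \widehat{V}^{\lambda^j}$. Since the coefficient of $\omega_{a-1}$ in $\lambda^j$ equals $j$, the weights $\lambda^0, \ldots, \lambda^m$ are pairwise distinct, and hence the $W_j$ are pairwise non-isomorphic. First I would record that pairwise non-isomorphic irreducible submodules of any module are independent: if $W_0 \cap \sum_{j \geq 1} W_j \neq 0$, then simplicity of $W_0$ forces $W_0 \subseteq \sum_{j \geq 1} W_j$, and (inductively assuming that sum direct) projecting $W_0$ onto a nonzero factor would yield, by Schur's lemma, an isomorphism $W_0 \cong W_j$ for some $j \geq 1$, a contradiction. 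Hence $\sum_{j=0}^m W_j = \bigoplus_{j=0}^m W_j$ is a $U_q(\mathfrak{sl}_{k-1})$-subrepresentation of $V^{\lambda}$ of dimension $\sum_{j=0}^m \dim W_j$.

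It then remains to show $\dim V^{\lambda} = \sum_{j=0}^m \dim W_j$, for then the inclusion $\bigoplus_{j} W_j \subseteq V^{\lambda}$ is forced to be an equality. Here I would pass to the crystal model. By Proposition~\ref{wvec} the weight-space bases $G^{\lambda}_{\mu}$ are indexed by $B^{\lambda}$, so $\dim V^{\lambda} = |B^{\lambda}|$, and Theorem~\ref{crystal} identifies $|B^{\lambda}|$ with $|SSYT(\lambda, k)|$; applying the same reasoning over $\mathfrak{sl}_{k-1}$ gives $\dim W_j = |SSYT((m^{a-1}, m-j), k-1)|$, since, unwinding the conventions of Section~3.1, the weight $\lambda = m\omega_a$ is the rectangle $(m^a)$ and $\lambda^j$ corresponds to the partition $(m^{a-1}, m-j)$ (after clearing a column of full height $k-1$ in the boundary case $a = k-1$). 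Thus the theorem reduces to the combinatorial identity
\[ |SSYT((m^a), k)| = \sum_{j=0}^m |SSYT((m^{a-1}, m-j), k-1)|. \]
I would prove this by a bijection: in a semistandard tableau of shape $(m^a)$ with entries in $\lbrace 1, \ldots, k \rbrace$, the maximal entry $k$, being strictly larger than everything above it, occurs only in the bottom row, and, the rows being weakly increasing, the $k$'s fill a right-justified segment of that row. Recording the number $j$ of such $k$'s and deleting them produces a tableau of shape $(m^{a-1}, m-j)$ with entries in $\lbrace 1, \ldots, k-1 \rbrace$, and appending $j$ copies of $k$ inverts the operation.

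Most of this is routine bookkeeping, so the genuinely delicate points are two. The first is the independence of the $W_j$, which I expect the Schur-lemma argument above to dispatch cleanly, the distinctness of the $\lambda^j$ being precisely what is needed. The second, and the one I regard as the crux, is verifying that the combinatorial decomposition matches the representation-theoretic one: namely that deleting the $k$'s yields \emph{exactly} the shapes corresponding to the highest weights $\lambda^j$ furnished by Proposition~\ref{iso}, with each occurring once. Once the displayed identity is established, $\bigoplus_{j=0}^m W_j$ and $V^{\lambda}$ have equal dimension and the former is contained in the latter, completing the proof of Theorem~\ref{decomp}.
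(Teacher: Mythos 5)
Your proposal is correct, but it routes around the one external input the paper uses. The paper's proof is two sentences: it quotes the Pieri rule to get the abstract isomorphism $V^{\lambda} \cong \bigoplus_{j=0}^m \widehat{V}^{\lambda^j}$ of $U_q(\mathfrak{sl}_{k-1})$-representations, and then, since each $\lambda^j$ occurs there with multiplicity one, Proposition~\ref{iso} together with Schur's lemma forces each submodule $U_q(\mathfrak{sl}_{k-1})v_j$ to be the corresponding isotypic component, so the sum is direct and exhausts $V^{\lambda}$. You instead use Schur's lemma only for the weaker, purely internal statement that pairwise non-isomorphic irreducible submodules sum directly, and you replace the citation of the Pieri rule by a dimension count proved by hand: $\dim V^{\lambda} = |SSYT((m^a),k)|$ and $\dim W_j = |SSYT((m^{a-1},m-j),k-1)|$ via the crystal dictionary of Theorem~\ref{crystal}, and then the delete-the-$k$'s bijection. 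In effect you re-prove the rectangular case of the $GL_k \downarrow GL_{k-1}$ branching rule at the level of dimensions, which is all that is needed once directness is known. What this buys: a self-contained argument that stays inside the paper's crystal-theoretic toolkit, and your bijection (the $k$'s in a tableau of shape $(m^a)$ form a right-justified segment of the bottom row, and deleting them is invertible) is exactly the combinatorial mechanism the paper exploits later in the proof of Lemma~\ref{in}, so nothing is wasted. What it costs: length, plus bookkeeping in the boundary cases --- when $a=1$ the pairwise distinctness of the $\lambda^j$ must be read off the coefficient of $\omega_a$ rather than of the nonexistent $\omega_{a-1}$, and when $a=k-1$ one clears $m-j$ (not one) full columns of height $k-1$; neither point affects correctness.
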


\begin{proof}
	By the Pieri rule, $V^{\lambda}$ is isomorphic as a $U_q(\mathfrak{sl}_{k-1})$-representation to $\bigoplus_{j=0}^m \widehat{V}^{\lambda^j}$.  In view of Proposition~\ref{iso}, the conclusion follows from Schur's lemma.  
\end{proof}

\begin{rem}
	What makes the statement of Theorem~\ref{decomp} so simple is that $v_0, \ldots, v_m$ constitute the highest weight vectors in the restriction of $V^{\lambda}$ to $U_q(\mathfrak{sl}_{k-1})$.  The decomposition of $V^{\lambda}$ cannot be so succinctly described if $\lambda$ is not rectangular.  
	
	Indeed, suppose that there exist nonnegative integers $m_{a-1}, \ldots, m_1$, not all zero, such that $\lambda = m \omega_a + m_{a-1} \omega_{a-1} + \cdots + m_1 \omega_1$.  Let $a' < a$ be maximal for which $m_{a'}$ is nonzero.  Then $\tilde{e}_1, \ldots, \tilde{e}_{k-2}$ all annihilate $\tilde{f}_{k-1} \cdots \tilde{f}_{a'} \overline{v_{\lambda}}$, but $f_{k-1} \cdots f_{a'} v_{\lambda}$ is \textit{not} a highest weight vector with respect to $U_q(\mathfrak{sl}_{k-1})$, for 
	\begin{align*}
	& e_{a-1} f_{k-1} \cdots f_{a'} v_{\lambda} = f_{k-1} \cdots f_{a} [q^{h_{a-1}};0]_q f_{a-2} \cdots f_{a'} v_{\lambda}
	\\ & = f_{k-1} \cdots f_{a} [q^{\lambda(h_{a-1}) - \alpha_{a'}(h_{a-1}) - \cdots - \alpha_{a-2}(h_{a-1})};0]_q f_{a-2} \cdots f_{a'} v_{\lambda}
	\\ & = f_{k-1} \cdots f_{a} [q^{m_{a-1} \omega_{a-1}(h_{a-1}) - \delta_{a' \leq a-2} \alpha_{a-2}(h_{a-1})};0]_q f_{a-2} \cdots f_{a'} v_{\lambda}
	\\ & = f_{k-1} \cdots f_{a} [q^{m_{a-1} + 1 - \delta_{a',a-1}};0]_q f_{a-2} \cdots f_{a'} v_{\lambda} \neq 0.
	\end{align*}

\end{rem}

\begin{exam} \label{square}
	Set $k := 3$ and $\lambda := 2 \omega_2$.  Then \[B_{\lambda} = \left \lbrace 
		\begin{smallmatrix} 1 & 1 \\ 2 & 2 \end{smallmatrix},
		\begin{smallmatrix} 1 & 1 \\ 2 & 3 \end{smallmatrix},
		\begin{smallmatrix} 1 & 2 \\ 2 & 3 \end{smallmatrix},
		\begin{smallmatrix} 1 & 1 \\ 3 & 3 \end{smallmatrix},
		\begin{smallmatrix} 1 & 2 \\ 3 & 3 \end{smallmatrix},
		\begin{smallmatrix} 2 & 2 \\ 3 & 3 \end{smallmatrix} \right \rbrace,\]
	and $\overline{v_{\lambda}} = \begin{smallmatrix} 1 & 1 \\ 2 & 2 \end{smallmatrix}$.  
		
	Furthermore, $(\overline{v_0}, \overline{v_1}, \overline{v_2}) = (\begin{smallmatrix} 1 & 1 \\ 2 & 2 \end{smallmatrix},  \begin{smallmatrix} 1 & 1 \\ 2 & 3 \end{smallmatrix},  \begin{smallmatrix} 1 & 1 \\ 3 & 3 \end{smallmatrix})$, and the partition of sets \[B_{\lambda} = \left \lbrace  \begin{smallmatrix} 1 & 1 \\ 2 & 2 \end{smallmatrix} \right \rbrace \sqcup \left \lbrace  \begin{smallmatrix} 1 & 1 \\ 2 & 3 \end{smallmatrix},  \begin{smallmatrix} 1 & 2 \\ 2 & 3 \end{smallmatrix} \right \rbrace \sqcup \left \lbrace  \begin{smallmatrix} 1 & 1 \\ 3 & 3 \end{smallmatrix},  \begin{smallmatrix} 1 & 2 \\ 3 & 3 \end{smallmatrix},  \begin{smallmatrix} 2 & 2 \\ 3 & 3 \end{smallmatrix}\right \rbrace\] underlies the decomposition of $U_q(\mathfrak{sl}_2)$-representations \[V^{\lambda} = U_q(\mathfrak{sl}_2) v_0 \oplus U_q(\mathfrak{sl}_2) v_1 \oplus U_q(\mathfrak{sl}_2) v_2.\]  
\end{exam}

\begin{exam} \label{triang}
	Set $k:=3$ and $\lambda := \omega_2 + \omega_1$.  Then \[B_{\lambda} = \left \lbrace 
	\begin{smallmatrix} 1 & 1 \\ 2 \end{smallmatrix},
	\begin{smallmatrix} 1 & 2 \\ 2 \end{smallmatrix},
	\begin{smallmatrix} 1 & 3 \\ 2 \end{smallmatrix},
	\begin{smallmatrix} 1 & 1 \\ 3 \end{smallmatrix},
	\begin{smallmatrix} 1 & 2 \\ 3 \end{smallmatrix},
	\begin{smallmatrix} 2 & 2 \\ 3 \end{smallmatrix},
	\begin{smallmatrix} 1 & 3 \\ 3 \end{smallmatrix},
	\begin{smallmatrix} 2 & 3 \\ 3 \end{smallmatrix} \right \rbrace,\] and $\overline{v_{\lambda}} = \begin{smallmatrix} 1 & 1 \\ 2 \end{smallmatrix}$.  
	
	The partition of sets \[B_{\lambda} = \left \lbrace 
	\begin{smallmatrix} 1 & 1 \\ 2 \end{smallmatrix},
	\begin{smallmatrix} 1 & 2 \\ 2 \end{smallmatrix} \right \rbrace \sqcup
	\left \lbrace \begin{smallmatrix} 1 & 3 \\ 2 \end{smallmatrix} \right \rbrace \sqcup
	\left \lbrace \begin{smallmatrix} 1 & 1 \\ 3 \end{smallmatrix},
	\begin{smallmatrix} 1 & 2 \\ 3 \end{smallmatrix},
	\begin{smallmatrix} 2 & 2 \\ 3 \end{smallmatrix} \right \rbrace \sqcup
	\left \lbrace \begin{smallmatrix} 1 & 3 \\ 3 \end{smallmatrix},
	\begin{smallmatrix} 2 & 3 \\ 3 \end{smallmatrix} \right \rbrace\] underlies the decomposition of $U_q(\mathfrak{sl}_2)$-representations \[V^{\lambda} \cong \widehat{V}^{\omega_1} \oplus \widehat{V}^{0} \oplus \widehat{V}^{2 \omega_1} \oplus \widehat{V}^{\omega_1}.\]
	
	Whereas $\tilde{e}_1$ vanishes on $\tilde{f}_2 \tilde{f}_1 \overline{v_{\lambda}} = \begin{smallmatrix} 1 & 3 \\ 2 \end{smallmatrix}$, however, $e_1$ does not vanish on $f_2 f_1 v_{\lambda}$, so $f_2 f_1 v_{\lambda}$ is \textnormal{not} a highest weight vector with respect to $U_q(\mathfrak{sl}_2)$.  
\end{exam}

\subsection{Restricting the lower global basis}
For all $0 \leq j \leq m$: 
\begin{itemize}
\item Fix a weight vector $u_j \in \widehat{V}^{\lambda_j}_{\lambda_j}$; 
\item Let $\phi_j \colon \widehat{V}^{\lambda_j} \rightarrow U_q(\mathfrak{sl}_{k-1})v_j$ be the isomorphism with $\phi_j(u_j) = v_j$; 
\item Let $(\widehat{L}^{\lambda^j}, \widehat{B}^{\lambda^j})$ be the crystal basis of $\widehat{V}^{\lambda^j}$ such that $\widehat{L}^{\lambda^j}_{\lambda^j} = A_0 u_j$ and $\widehat{B}^{\lambda^j}_{\lambda^j} = \lbrace \overline{u_j} \rbrace$;   
\item Let $\widehat{G}^{\lambda^j} = \lbrace \widehat{G}^j_b \rbrace_{b \in \widehat{B}^{\lambda^j}}$ be the lower global basis of $\widehat{V}^{\lambda^j}$.  
\end{itemize}

\begin{lem} \label{in}
	The isomorphism $\phi_j$ sends $\widehat{L}^{\lambda^j}$ into $L^{\lambda}$, and the induced map $\phi_j \colon \widehat{L}^{\lambda^j}/q\widehat{L}^{\lambda^j} \rightarrow L^{\lambda}/qL^{\lambda}$ sends $\widehat{B}^{\lambda^j}$ into $B^{\lambda}$.    
\end{lem}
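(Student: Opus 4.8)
The plan is to show that $\phi_j(\widehat{L}^{\lambda^j}) \subset L^\lambda$ by exploiting the crystal-lattice characterizations from Proposition~\ref{lattice}: both lattices are spanned by strings of Kashiwara operators applied to their respective highest-weight vectors, so it suffices to relate the two crystal structures via $\phi_j$. The key observation is that $\phi_j$ is a map of $U_q(\mathfrak{sl}_{k-1})$-representations carrying the highest-weight vector $u_j$ of $\widehat{V}^{\lambda^j}$ to $v_j \in L^\lambda$.

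First I would establish that $\phi_j$ intertwines Kashiwara's operators $\tilde{e}_i, \tilde{f}_i$ for all $1 \le i \le k-2$. These operators are defined purely in terms of the $\mathfrak{sl}_2$-representation theory of the triple $(e_i, f_i, h_i)$ acting on weight vectors (cf. the definition preceding $A_0$), and $\phi_j$, being a homomorphism of $U_q(\mathfrak{sl}_{k-1})$-representations, commutes with $e_i, f_i, q^h$ for $i \le k-2$ and $h$ in the coweight lattice of $\mathfrak{sl}_{k-1}$. Hence $\phi_j(\tilde{f}_i w) = \tilde{f}_i \phi_j(w)$ for every weight vector $w \in \widehat{V}^{\lambda^j}$, and likewise for $\tilde{e}_i$.

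Next, by Proposition~\ref{lattice} applied to $\widehat{V}^{\lambda^j}$,
\[
\widehat{L}^{\lambda^j} = \operatorname{span}\lbrace \tilde{f}_{i_r} \cdots \tilde{f}_{i_1} u_j : r \ge 0;\ 1 \le i_1, \ldots, i_r \le k-2 \rbrace.
\]
Applying $\phi_j$ and using the intertwining property, $\phi_j$ sends each spanning vector $\tilde{f}_{i_r} \cdots \tilde{f}_{i_1} u_j$ to $\tilde{f}_{i_r} \cdots \tilde{f}_{i_1} v_j$. Since $v_j \in L^\lambda$ and $L^\lambda$ is stable under the $\tilde{f}_i$ (condition (2) in the definition of a crystal lattice), each such image lies in $L^\lambda$; therefore $\phi_j(\widehat{L}^{\lambda^j}) \subset L^\lambda$. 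This inclusion makes the reduction mod $q$ well-defined, so $\phi_j$ descends to $\overline{\phi_j} \colon \widehat{L}^{\lambda^j}/q\widehat{L}^{\lambda^j} \to L^\lambda/qL^\lambda$. Because $\phi_j$ is $A_0$-linear and commutes with the $\tilde{f}_i$, the induced map sends $\overline{u_j} = \overline{v_j} \in B^\lambda$ (note $\overline{v_j} \ne 0$ by Proposition~\ref{vanish}(1)), and more generally sends $\tilde{f}_{i_r} \cdots \tilde{f}_{i_1} \overline{u_j}$ to $\tilde{f}_{i_r} \cdots \tilde{f}_{i_1} \overline{v_j}$. Every element of $\widehat{B}^{\lambda^j}$ has the former shape, and its image is either $0$ or an element of $B^\lambda$ by the characterization of $B^\lambda$ in Proposition~\ref{lattice}; one checks the image is nonzero precisely because $\overline{\phi_j}$ commutes with $\tilde{e}_i$ and so is injective on crystal elements (if $\tilde{f}_{i_r}\cdots\tilde{f}_{i_1}\overline{u_j} \ne 0$ then applying the reverse string of $\tilde{e}_i$'s recovers $\overline{v_j} \ne 0$, forcing the image to survive). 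Hence $\overline{\phi_j}(\widehat{B}^{\lambda^j}) \subset B^\lambda$.

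The main obstacle is the nonvanishing claim in the second assertion: one must verify that $\overline{\phi_j}$ does not kill any basis element of $\widehat{B}^{\lambda^j}$. The cleanest route is to argue that $\overline{\phi_j}$ is injective as a map of crystals — since it commutes with both $\tilde{e}_i$ and $\tilde{f}_i$ and sends the highest-weight element to the nonzero element $\overline{v_j}$, the connectedness of the crystal $\widehat{B}^{\lambda^j}$ (it is generated from $\overline{u_j}$ by the $\tilde{f}_i$) together with the crystal axiom (condition (3)) relating $\tilde{e}_i$ and $\tilde{f}_i$ forces the image of every element to be nonzero. Care is needed to confirm that $v_j$ genuinely plays the role of a crystal highest-weight vector inside $L^\lambda/qL^\lambda$ for the $\mathfrak{sl}_{k-1}$-action, which is exactly what Proposition~\ref{vanish}(1)--(2) guarantees.
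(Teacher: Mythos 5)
Your proof is correct, but it handles the heart of the lemma --- the nonvanishing of images of crystal elements --- by a genuinely different route than the paper. On the first claim the two arguments coincide: $\phi_j$ intertwines $\tilde{e}_i, \tilde{f}_i$ for $1 \leq i \leq k-2$ because these operators depend only on the $U_q(\mathfrak{sl}_{k-1})$-module structure, and Proposition~\ref{lattice} then places every spanning vector $\tilde{f}_{i_r} \cdots \tilde{f}_{i_1} v_j = \tilde{f}_{i_r} \cdots \tilde{f}_{i_1} \tilde{f}_{k-1}^j \cdots \tilde{f}_a^j v_{\lambda}$ inside $L^{\lambda}$. For the second claim, the paper descends to the tableau model: by Theorem~\ref{crystal} it replaces $\overline{u_j}$ by $T_{\lambda^j}$ and $\overline{v_j}$ by $T_j = \tilde{f}_{k-1}^j \cdots \tilde{f}_a^j(T_{\lambda})$, and shows by induction on $r$ (using Lemma~\ref{expl}) that a nonvanishing string $\tilde{f}_{i_r} \cdots \tilde{f}_{i_1}(T_{\lambda^j})$ forces $\tilde{f}_{i_r} \cdots \tilde{f}_{i_1}(T_j)$ to be the explicit tableau obtained by appending $j$ boxes with entry $k$ to the bottom row, in particular nonzero. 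You instead argue abstractly: writing $b = \tilde{f}_{i_r} \cdots \tilde{f}_{i_1} \overline{u_j}$ with all intermediate elements nonzero, axiom (3) of a crystal basis gives $\tilde{e}_{i_1} \cdots \tilde{e}_{i_r} b = \overline{u_j}$; pushing this identity through the induced map (which commutes with the $\tilde{e}_i$) yields $\tilde{e}_{i_1} \cdots \tilde{e}_{i_r} \bigl(\tilde{f}_{i_r} \cdots \tilde{f}_{i_1} \overline{v_j}\bigr) = \overline{v_j} \neq 0$, so the image cannot vanish, and it lies in $B^{\lambda} \cup \lbrace 0 \rbrace$ by Proposition~\ref{lattice}. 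This is valid and arguably cleaner: it isolates all combinatorial input to the single fact $\overline{v_j} \neq 0$. Two caveats. First, your citation for that fact is wrong: Proposition~\ref{vanish}(1) only gives $v_j \neq 0$ in $V^{\lambda}$, which does not rule out $v_j \in qL^{\lambda}$ --- exactly the subtlety that makes this lemma nontrivial; what you need is Lemma~\ref{zero}(1) with $c_a = \cdots = c_{k-1} = j$, together with Lemma~\ref{convert} to identify $\overline{v_j}$ with $\tilde{f}_{k-1}^j \cdots \tilde{f}_a^j \overline{v_{\lambda}}$. Second, a trade-off to be aware of: the paper's tableau computation produces, as a byproduct, the explicit description of $\phi_j$ on crystal elements (append $j$ boxes containing $k$ to the bottom row), and Section 5 cites precisely this description from the proof of Lemma~\ref{in} to see that $\phi_j$ commutes with $\xi_{k-1}$; under your argument that description would still have to be established separately.
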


\begin{proof}
	Since $v_j = \tilde{f}_{k-1}^{j} \cdots \tilde{f}_{a}^j v_{\lambda}$, the former claim follows from Proposition~\ref{lattice}.  For the latter claim, it suffices to show that $\tilde{f}_{i_r} \cdots \tilde{f}_{i_1} \overline{u_j} \neq 0$ implies $\tilde{f}_{i_r} \cdots \tilde{f}_{i_1} \overline{v_j} \neq 0$ for all $r \geq 0$ and sequences $i_1, \ldots, i_r \in \lbrace 1, \ldots, k-2 \rbrace$.  
	
	By Theorem~\ref{crystal}, we may substitute $T_{\lambda^j}$ for $\overline{u_j}$ and $T_j := \tilde{f}_{k-1}^j \cdots \tilde{f}_a^j (T_{\lambda})$ for $\overline{v_j}$.  By Lemma~\ref{expl}, the entries of $T_j$ in the rightmost $j$ boxes in the bottom row are all equal to $k$, and the tableau obtained by removing these boxes is $T_{\lambda^j}$.\footnote{As a partition, $\lambda^j$ \textit{always} refers to $(m^{a-1}, m-j)$.  If $a < k-1$, there is no ambiguity, but, if $a = k-1$, then $\omega_a$ vanishes in the weight lattice of $\mathfrak{sl}_{k-1}$, so $\lambda^j$ could mean $(j^{k-2})$.  It does not.  The crystal structures on $SSYT((j^{k-2}), k-1)$ and $SSYT((m^{k-2}, m-j), k-1)$ are identical, and we favor the latter model for its pictorial compatibility with the crystal structure on $SSYT((m^{k-1}), k)$.  }  Inducting on $r$, we see that if $\tilde{f}_{i_r} \cdots \tilde{f}_{i_1} (T_{\lambda^j}) \neq 0$, then $\tilde{f}_{i_r} \cdots \tilde{f}_{i_1}(T_j)$ is the unique tableau such that (i) the entries in the rightmost $j$ boxes in the bottom row are all equal to $k$, and (ii) the tableau obtained by removing these boxes is $\tilde{f}_{i_r} \cdots \tilde{f}_{i_1}(T_{\lambda^j})$.  
\end{proof}

\begin{thm} \label{res}
	The following assertions hold.  
	\begin{enumerate}
		\item $L^{\lambda} = \bigoplus_{j=0}^m \phi_j(\widehat{L}^{\lambda^j})$.
		\item $B^{\lambda} = \bigsqcup_{j=0}^m \phi_j(\widehat{B}^{\lambda^j})$.
		\item $G^{\lambda} = \bigsqcup_{j=0}^m \phi_j(\widehat{G}^{\lambda^j})$.  In particular, $\phi_j(\widehat{G}^j_b) = G^{\lambda}_{\phi_j(b)}$ for all $b \in \widehat{B}^{\lambda^j}$.  
	\end{enumerate} 
\end{thm}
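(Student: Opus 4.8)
The plan is to establish the three assertions in the order (2), (1), (3): the crystal decomposition (2) underpins the lattice decomposition (1) via Nakayama's lemma, and the global-basis statement (3) rests on (1) together with the characterization of the lower global basis in Theorem~\ref{global}.

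For (2), I would first upgrade Lemma~\ref{in} to an explicit identification of the induced crystal map $\phi_j \colon \widehat{B}^{\lambda^j} \to B^{\lambda}$ with a combinatorial ``append'' map. Identifying $\widehat{B}^{\lambda^j}$ with $SSYT(\lambda^j, k-1)$ and $B^{\lambda}$ with $SSYT(\lambda, k)$ via Theorem~\ref{crystal}, one has $\phi_j(\overline{u_j}) = \overline{v_j}$, i.e. $T_{\lambda^j} \mapsto T_j$; since $\phi_j$ commutes with $\tilde{f}_i$ for $1 \leq i \leq k-2$, the induction in the proof of Lemma~\ref{in} shows that $\phi_j$ carries $S \in SSYT(\lambda^j, k-1)$ to the tableau obtained by appending $j$ boxes with entry $k$ to the bottom row of $S$. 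Because $\lambda$ is the $a \times m$ rectangle, every column has length $a$ with strictly increasing entries, so all copies of $k$ in any $T \in SSYT(\lambda, k)$ lie in the bottom row; stripping them inverts the append map. Hence $\bigsqcup_j \phi_j$ is the bijection $\bigsqcup_{j=0}^m SSYT(\lambda^j, k-1) \to SSYT(\lambda, k)$ recording and removing the number of $k$'s, and (2) follows.

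For (1), the inclusion $\bigoplus_{j=0}^m \phi_j(\widehat{L}^{\lambda^j}) \subseteq L^{\lambda}$ is immediate from Lemma~\ref{in}, with the sum direct by Theorem~\ref{decomp}. For the reverse inclusion I would invoke Nakayama's lemma over the local ring $A_0$: the image of $L' := \bigoplus_j \phi_j(\widehat{L}^{\lambda^j})$ in $L^{\lambda}/qL^{\lambda}$ is spanned by $\bigcup_j \phi_j(\widehat{B}^{\lambda^j})$, which equals $B^{\lambda}$ by (2) and hence is all of $L^{\lambda}/qL^{\lambda}$. Since $L^{\lambda}$ is a finitely generated $A_0$-module with $L' + qL^{\lambda} = L^{\lambda}$, we conclude $L' = L^{\lambda}$.

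Part (3) is the crux, and here I would show that each $\phi_j(\widehat{G}^j_b)$ satisfies the three conditions characterizing $G^{\lambda}_{\phi_j(b)}$ in Theorem~\ref{global}, so that uniqueness forces $\phi_j(\widehat{G}^j_b) = G^{\lambda}_{\phi_j(b)}$. That $\overline{\phi_j(\widehat{G}^j_b)} = \phi_j(b)$ is Lemma~\ref{in} applied to $\overline{\widehat{G}^j_b} = b$. The key step is the intertwining $\psi \circ \phi_j = \phi_j \circ \widehat{\psi}$ of bar involutions (where $\widehat{\psi}$ is the involution of $\widehat{V}^{\lambda^j}$): for $\widehat{p} \in U_q(\mathfrak{sl}_{k-1})$ one computes $\phi_j(\widehat{\psi}(\widehat{p}\, u_j)) = \psi(\widehat{p}) v_j$ and, by the semilinearity of $\psi$, $\psi(\phi_j(\widehat{p}\, u_j)) = \psi(\widehat{p}) \psi(v_j)$, so the two agree once $\psi(v_j) = v_j$; and this holds because $v_j = f_{k-1}^{(j)} \cdots f_a^{(j)} v_{\lambda}$ is a product of divided powers, each fixed by $\psi$ since $[n]_q!$ is invariant under $q \mapsto q^{-1}$. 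The intertwining yields $\psi(\phi_j(\widehat{G}^j_b)) = \phi_j(\widehat{\psi}(\widehat{G}^j_b)) = \phi_j(\widehat{G}^j_b)$, whence $\phi_j(\widehat{G}^j_b) \in L^{\lambda} \cap \psi(L^{\lambda})$ using (1). Finally, $\widehat{G}^j_b \in U_A(\mathfrak{sl}_{k-1}) u_j$ gives $\phi_j(\widehat{G}^j_b) \in U_A(\mathfrak{sl}_{k-1}) v_j \subseteq U_A(\mathfrak{sl}_k) v_{\lambda}$, since $v_j \in U_A(\mathfrak{sl}_k) v_{\lambda}$ and $U_A(\mathfrak{sl}_{k-1}) \subseteq U_A(\mathfrak{sl}_k)$. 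I expect the main obstacle to be assembling precisely these compatibilities; the essential point is that the rectangular hypothesis renders each $v_j$ bar-invariant and keeps it inside the integral form, so that $\phi_j$ transports the defining properties of the lower global basis from $\widehat{V}^{\lambda^j}$ to $V^{\lambda}$.
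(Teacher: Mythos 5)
Your overall route matches the paper's quite closely: the paper also proves (1) and (2) from Lemma~\ref{in}, Theorem~\ref{decomp}, and Nakayama's lemma (it gets (1) first by a rank count --- $\phi_q$ is an isomorphism, so the two lattices have equal rank and the injective map $\phi_{\text{zero}}$ is an isomorphism --- with (2) as a corollary, whereas you prove (2) first by the explicit strip-the-$k$'s bijection and then deduce (1); both are sound), and its proof of (3) verifies exactly the memberships you verify, by the same computations: $\psi(v_j)=v_j$ because divided powers are fixed under $q \mapsto q^{-1}$, and $\phi_j(\widehat{G}^j_b) = p v_j \in U_A(\mathfrak{sl}_k)v_{\lambda}$ for $p \in U_A(\mathfrak{sl}_{k-1})$ with $\widehat{G}^j_b = p u_j$.

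However, the final step of your part (3) has a gap. Theorem~\ref{global} does not characterize individual elements $G_b$; its uniqueness clause applies to a \emph{family} that is simultaneously an $A_0$-basis of $L^{\lambda}$, a $\mathbb{Q}$-basis of $E := U_A(\mathfrak{sl}_k)v_{\lambda} \cap L^{\lambda} \cap \psi(L^{\lambda})$, and satisfies $\overline{G_b}=b$ and $\psi(G_b)=G_b$. You verify that each $\phi_j(\widehat{G}^j_b)$ \emph{lies in} $E$, is $\psi$-fixed, and reduces to $\phi_j(b)$, but you never show that the collection \emph{spans} $E$ over $\mathbb{Q}$, so condition (i) of Theorem~\ref{global} is unverified and ``uniqueness'' cannot yet be invoked. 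The paper closes exactly this hole with a dimension count: by (1) the collection $\bigsqcup_j \phi_j(\widehat{G}^{\lambda^j})$ is linearly independent of cardinality $|B^{\lambda}|$, and $\dim_{\mathbb{Q}} E = |B^{\lambda}|$ because $G^{\lambda}$ is a $\mathbb{Q}$-basis of $E$; hence the collection is a $\mathbb{Q}$-basis of $E$ and the uniqueness clause applies. Alternatively, you can legitimize your element-wise argument directly: since $G^{\lambda}$ is a $\mathbb{Q}$-basis of $E$ mapped bijectively by reduction onto the basis $B^{\lambda}$ of $L^{\lambda}/qL^{\lambda}$, reduction mod $q$ is injective on $E$, so any element of $E$ reducing to $\phi_j(b)$ equals $G^{\lambda}_{\phi_j(b)}$. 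Either observation is a one-line repair, but as written your conclusion does not follow from what you proved.
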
  

\begin{proof}		
	We freely apply Lemma~\ref{in}.
	\begin{enumerate}
		\item 	Set $L := \bigoplus_{j=0}^m \widehat{L}^{\lambda^j}$, and define $\phi \colon L \rightarrow L^{\lambda}$ by $\phi := \sum_{j=0}^m \phi_j$.  Note that $\phi$ induces maps $\phi_q \colon \mathbb{Q}(q) \otimes_{A_0} L \rightarrow \mathbb{Q}(q) \otimes_{A_0} L^{\lambda}$ and $\phi_{\text{zero}} \colon L/qL \rightarrow L^{\lambda}/qL^{\lambda}$.  It follows from Theorem~\ref{decomp} that $\phi_q$ is an isomorphism of $\mathbb{Q}(q)$-vector spaces.  Thus, $\operatorname{rank}_{A_0} L = \operatorname{rank}_{A_0} L^{\lambda}$, which implies $\dim_{\mathbb{Q}} L/qL = \dim_{\mathbb{Q}} L^{\lambda}/qL^{\lambda}$.  Since $\phi_{\text{zero}}$ is injective, we see that $\phi_{\text{zero}}$ is an isomorphism of $\mathbb{Q}$-vector spaces, and the conclusion follows from Nakayama's Lemma.  
		
		\item Immediate from the observation that $\phi_{\text{zero}}$ is an isomorphism.  
		
		\item By assertion (1), we see that $\bigsqcup_{j=0}^m \phi_j(\widehat{G}^{\lambda^j})$ is an $A_0$-basis of $L^{\lambda}$.  
		
		Let $b \in \widehat{B}^{\lambda^j}$.  We claim that $\phi_j(\widehat{G}^{j}_b) \in U_A(\mathfrak{sl}_k) v_{\lambda} \cap L^{\lambda} \cap \psi(L^{\lambda})$.  Since $\widehat{G}^j_b \in \widehat{L}^{\lambda^j}$, it follows that $\phi_j(\widehat{G}^j_b) \in L^{\lambda}$.  
		
		Fix $p \in U_A(\mathfrak{sl}_{k-1})$ such that $\widehat{G}^j_b = p u_j$.  Note that $\phi_j(p u_j) = p v_j = p f_{k-1}^{(j)} \cdots f_{a}^{(j)} v_{\lambda} \in U_A(\mathfrak{sl}_k) v_{\lambda}$.  
		
		Furthermore, since $p u_j = \psi(p) u_j$, it follows that $p v_j =  \psi(p) v_j = \psi(p) f_{k-1}^{(j)} \cdots f_{a}^{(j)} v_{\lambda} = \psi(p) \psi(f_{k-1}^{(j)} \cdots f_{a}^{(j)}) v_{\lambda} = \psi(p v_j) \in \psi(L^{\lambda})$.  
		
		Hence $\bigsqcup_{j=0}^m \phi_j(\widehat{G}^{\lambda^j})$ is a $\mathbb{Q}$-basis of $U_A(\mathfrak{sl}_k) v_{\lambda} \cap L^{\lambda} \cap \psi(L^{\lambda})$, for it is a linearly independent subset of cardinality equal to that of $B^{\lambda}$.  
	\end{enumerate}

\end{proof}

\begin{exam} \label{resquare}
	Returning to Example~\ref{square}, we set $k:=3$ and $\lambda:=2 \omega_2$.  Then \[G^{\lambda} = \left \lbrace v_0, v_1, f_1 v_1, v_2, f_1 v_2, f_1^{(2)} v_2 \right \rbrace.\]
	
	Furthermore, $(\lambda^0, \lambda^1, \lambda^2) = (0, \omega_1, 2 \omega_1)$, and \[\left(\widehat{G}^{\lambda^0}, \widehat{G}^{\lambda^1}, \widehat{G}^{\lambda^2}\right) = \left(\lbrace u_0 \rbrace, \lbrace u_1, f_1 u_1 \rbrace, \lbrace u_2, f_1 u_2, f_1^{(2)} u_2 \rbrace \right).\]
\end{exam}

\begin{exam} \label{retriang}
	Returning to Example~\ref{triang}, we set $k:=3$ and $\lambda:=\omega_2 + \omega_1$.  Then \[G^{\lambda} = \left \lbrace v_{\lambda}, f_1 v_{\lambda}, f_2 f_1 v_{\lambda}, f_2 v_{\lambda}, f_1 f_2 v_{\lambda}, f_1^{(2)} f_2 v_{\lambda}, f_2^{(2)} f_1 v_{\lambda}, f_1 f_2^{(2)} f_1 v_{\lambda} \right \rbrace.\]  Since $f_2 f_1 v_{\lambda}$ is not a highest weight vector with respect to $U_q(\mathfrak{sl}_2)$, the global basis is not compatible with the decomposition \[V^{\lambda} \cong \widehat{V}^{\omega_1} \oplus \widehat{V}^{0} \oplus \widehat{V}^{2 \omega_1} \oplus \widehat{V}^{\omega_1}.\]     
\end{exam}

\subsection{Restricting the upper global basis}
For all $0 \leq j \leq m$, let $(\cdot, \cdot)_j$ denote the Shapovalov form on $\widehat{V}^{\lambda^j}$.  Note that $(\cdot, \cdot)_j = \frac{(\phi_j(\cdot), \phi_j(\cdot))}{(v_j, v_j)}$.  

\begin{thm} \label{upper}
	Let $\widehat{F}^{\lambda^j} = \lbrace \widehat{F}^j_b \rbrace_{b \in \widehat{B}^{\lambda^j}}$ be the upper global basis of $\widehat{V}^{\lambda^j}$.  Then $F^{\lambda} = \bigsqcup_{j=0}^m \frac{\phi_j(\widehat{F}^{\lambda^j})}{(v_j, v_j)}$.  In particular, $\frac{\phi_j(\widehat{F}^{j}_b)}{(v_j, v_j)} = F^{\lambda}_{\phi_j(b)}$ for all $b \in \widehat{B}^{\lambda^j}$.  
\end{thm}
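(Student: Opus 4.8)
The plan is to exhibit the dual basis explicitly and invoke its uniqueness. By definition $F^{\lambda}$ is the basis dual to $G^{\lambda}$ with respect to the Shapovalov form $(\cdot, \cdot)$, so it suffices to check that each vector $\frac{\phi_j(\widehat{F}^j_b)}{(v_j, v_j)}$ (for $0 \leq j \leq m$ and $b \in \widehat{B}^{\lambda^j}$) satisfies the defining duality relation $\left(G^{\lambda}_{\phi_{j'}(b')}, \frac{\phi_j(\widehat{F}^j_b)}{(v_j,v_j)}\right) = \delta_{\phi_j(b), \phi_{j'}(b')}$ against the lower global basis elements $G^{\lambda}_{\phi_{j'}(b')} = \phi_{j'}(\widehat{G}^{j'}_{b'})$ furnished by Theorem~\ref{res}(3). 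Since these vectors are indexed by $\bigsqcup_{j=0}^m \widehat{B}^{\lambda^j} = B^{\lambda}$ (cf. Theorem~\ref{res}(2)) and the Shapovalov form is nondegenerate, verifying the relation identifies them as the $F^{\lambda}_{\phi_j(b)}$.

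The crucial preliminary step is that the decomposition $V^{\lambda} = \bigoplus_{j=0}^m U_q(\mathfrak{sl}_{k-1}) v_j$ of Theorem~\ref{decomp} is orthogonal with respect to $(\cdot, \cdot)$. I would establish this by a weight argument. Writing a weight $\mu$ of $V^{\lambda}$ as $\mu = \lambda - \sum_{i=1}^{k-1} c_i \alpha_i$, the highest weight vector $v_j = f_{k-1}^{(j)} \cdots f_a^{(j)} v_{\lambda}$ has $\alpha_{k-1}$-coefficient $c_{k-1} = j$, while the generators of $U_q(\mathfrak{sl}_{k-1})$ act through $e_i, f_i$ with $1 \leq i \leq k-2$, altering only $c_1, \ldots, c_{k-2}$. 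Hence every weight occurring in $U_q(\mathfrak{sl}_{k-1}) v_j$ has $\alpha_{k-1}$-coefficient equal to $j$, so distinct summands occupy disjoint sets of weights. Proposition~\ref{perpen} then forces $(U_q(\mathfrak{sl}_{k-1}) v_j, U_q(\mathfrak{sl}_{k-1}) v_{j'}) = 0$ whenever $j \neq j'$.

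With orthogonality in hand the verification is routine. For the cross terms $j \neq j'$, the pairing $(\phi_{j'}(\widehat{G}^{j'}_{b'}), \phi_j(\widehat{F}^j_b))$ vanishes, matching $\delta_{\phi_j(b), \phi_{j'}(b')} = 0$; for $j = j'$, the identity $(\cdot, \cdot)_j = \frac{(\phi_j(\cdot), \phi_j(\cdot))}{(v_j, v_j)}$ recorded before the theorem gives
\[\left(\phi_j(\widehat{G}^j_{b'}), \frac{\phi_j(\widehat{F}^j_b)}{(v_j, v_j)}\right) = (\widehat{G}^j_{b'}, \widehat{F}^j_b)_j = \delta_{b, b'},\]
since $\widehat{F}^{\lambda^j}$ is dual to $\widehat{G}^{\lambda^j}$. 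The main obstacle is thus the orthogonality of the decomposition; once this is reduced to the observation that the $\alpha_{k-1}$-coefficient is constant on each summand, the conclusion follows from Theorem~\ref{res}(3), the uniqueness of the Shapovalov form (already invoked in the displayed identity), and the uniqueness of dual bases.
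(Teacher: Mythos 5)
Your proposal is correct and takes essentially the same route as the paper's proof: verify the Shapovalov duality pairings against the lower global basis elements $\phi_{j'}(\widehat{G}^{j'}_{b'})$ supplied by Theorem~\ref{res}, using the rescaled form identity $(\cdot,\cdot)_j = \frac{(\phi_j(\cdot),\phi_j(\cdot))}{(v_j,v_j)}$ when $j = j'$ and weight-space orthogonality (Proposition~\ref{perpen}) when $j \neq j'$. The only cosmetic difference is that you spell out the $\alpha_{k-1}$-coefficient bookkeeping explicitly, where the paper simply cites Proposition~\ref{wvec} to place the vectors from distinct summands in distinct weight spaces.
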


\begin{proof}
	Let $b \in \widehat{B}^{\lambda^j}$.  For all $b' \in \widehat{B}^{\lambda^j}$, we see that  \[\frac{(\phi_j(\widehat{F}^j_b),\phi_j(\widehat{G}^j_{b'}))}{(v_j, v_j)} = (\widehat{F}^j_b, \widehat{G}^j_{b'})_j = \delta_{b,b'}.\]
	
	Furthermore, for all $j' \neq j$ and $b' \in \widehat{B}^{\lambda^{j'}}$, it follows from Propositions~\ref{perpen} and \ref{wvec} that $(\phi_j(\widehat{F}^j_b),\phi_{j'}(\widehat{G}^{j'}_{b'})) = 0$, and the conclusion follows from Theorem~\ref{res}.  
\end{proof}

\begin{lem} \label{gauss}
	For all $a \leq a' \leq k-1$, set $v_{a',j} := f_{a'}^{(j)} \cdots f_{a}^{(j)} v_{\lambda}$.  Then $(v_{a',j}, v_{a',j}) = \begin{bsmallmatrix} m \\ j \end{bsmallmatrix}_q$ for all $0 \leq j \leq m$.  
\end{lem}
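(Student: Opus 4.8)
The plan is to induct on $a'$, peeling off one divided power $f_{a'}^{(j)}$ at a time and reducing each stage to a single $\mathfrak{sl}_2$-computation via the adjointness of the Shapovalov form. Writing $v_{a-1,j} := v_{\lambda}$, we have $v_{a',j} = f_{a'}^{(j)} v_{a'-1,j}$ for all $a \leq a' \leq k-1$. Since $\varphi(f_{a'}^{(j)}) = e_{a'}^{(j)}$, the defining property $(pu,w) = (u, \varphi(p)w)$ of the form gives
\[(v_{a',j}, v_{a',j}) = \left(v_{a'-1,j},\, e_{a'}^{(j)} f_{a'}^{(j)} v_{a'-1,j}\right),\]
so the entire problem collapses to evaluating $e_{a'}^{(j)} f_{a'}^{(j)}$ on $v_{a'-1,j}$.

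The key ingredient I would establish first is the standard $\mathfrak{sl}_2$ identity: if $w$ is a weight vector with $e_s w = 0$ and $q^{h_s} w = q^n w$, then $e_s^{(j)} f_s^{(j)} w = \begin{bsmallmatrix} n \\ j \end{bsmallmatrix}_q w$. I would prove this by iterating the relation $e_s f_s^{(i)} = f_s^{(i)} e_s + f_s^{(i-1)}[q^{h_s}; 1-i]_q$ (Lemma 3.2.5 in Hong--Kang, already invoked in the proof of Proposition~\ref{tech}): evaluating $[q^{h_s}; 1-i]_q$ on the weight vector $w$ yields $e_s f_s^{(i)} w = [n+1-i]_q f_s^{(i-1)} w$, so applying $e_s^{\,j}$ to $f_s^{(j)} w$ telescopes to $\tfrac{[n]_q!}{[n-j]_q!}\, w$, and dividing by $[j]_q!$ produces the Gaussian binomial.

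To invoke this at each stage I must verify its two hypotheses for $w = v_{a'-1,j}$ and $s = a'$. First, $e_{a'} v_{a'-1,j} = 0$: for $a' = a$ this is just $e_a v_{\lambda} = 0$, and for $a' > a$ the operator $e_{a'}$ commutes with each of $f_a, \ldots, f_{a'-1}$ by relation (3), so it passes through the product to annihilate $v_{\lambda}$. Second, the relevant $\mathfrak{sl}_2$-weight is $n = (\operatorname{wt} v_{a'-1,j})(h_{a'})$: for $a' = a$ this equals $(m\omega_a)(h_a) = m$, whereas for $a' > a$ the weight is $m\omega_a - j(\alpha_a + \cdots + \alpha_{a'-1})$, and since $\omega_a(h_{a'}) = 0$ while among the $\alpha_i(h_{a'})$ only the $\alpha_{a'-1}$ term is nonzero (equal to $-1$), we obtain $n = j$. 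Thus the base case $a' = a$ gives $\begin{bsmallmatrix} m \\ j \end{bsmallmatrix}_q (v_{\lambda}, v_{\lambda}) = \begin{bsmallmatrix} m \\ j \end{bsmallmatrix}_q$, and each subsequent step multiplies the norm by $\begin{bsmallmatrix} j \\ j \end{bsmallmatrix}_q = 1$, so the value is unchanged throughout the induction.

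The arithmetic is short, so the one point I would be most careful about is precisely this weight bookkeeping: the fact that for $a' > a$ the $\mathfrak{sl}_2$-weight collapses from $m$ to $j$ is what makes the norm \emph{stabilize} at $\begin{bsmallmatrix} m \\ j \end{bsmallmatrix}_q$ rather than grow with $a'$. Getting the pairings $\omega_a(h_{a'})$ and $\alpha_i(h_{a'})$ right is therefore the crux, and I would double-check them against the Cartan matrix before concluding.
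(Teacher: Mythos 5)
Your proposal is correct and takes essentially the same route as the paper's proof: induction on $a'$, Shapovalov adjointness to reduce to evaluating $e_{a'}^{(j)}f_{a'}^{(j)}$ on $v_{a'-1,j}$, telescoping the relation $e_s f_s^{(i)} = f_s^{(i)}e_s + f_s^{(i-1)}[q^{h_s};1-i]_q$, and the same weight bookkeeping ($n=m$ at the base case, $n=j$ at each later stage, so each inductive step multiplies the norm by $1$). The only difference is presentational: you extract the telescoping into a standalone $\mathfrak{sl}_2$ identity for highest weight vectors, whereas the paper carries out the identical computation inline inside the bilinear form.
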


\begin{proof}
	We induct on $a'$.  For the base case, note that
	\begin{align*}
	(f_a^{(j)}v_{\lambda}, f_a^{(j)} v_{\lambda}) & = (v_{\lambda}, e_a^{(j)} f_a^{(j)} v_{\lambda}) 
	\\ &= \frac{(v_{\lambda},e_a^{(j-1)}f_a^{(j)}e_a v_{\lambda}) + (v_{\lambda}, e_a^{(j-1)} f_a^{(j-1)} [q^{h_a}; 1-j]_q v_{\lambda})}{[j]_q}
	\\ &= \frac{[m+1-j]_q(v_{\lambda}, e_a^{(j-1)} f_a^{(j-1)} v_{\lambda})}{[j]_q},
	\end{align*}
	so we see by induction on $j$ that 
	\[(f_a^{(j)} v_{\lambda}, f_a^{(j)} v_{\lambda}) = \frac{[m+1-j]_q \cdots [m]_q}{[j]_q \cdots [1]_q} (v_{\lambda}, v_{\lambda}) = \begin{bmatrix} m \\ j \end{bmatrix}_q.\]
	
	For the inductive step, note that 
	\begin{align*}
	& (f_{a'}^{(j')} v_{a'-1,j}, f_{a'}^{(j')} v_{a'-1,j}) 
	= (v_{a'-1,j}, e_{a'}^{(j')} f_{a'}^{(j')} v_{a'-1,j})
	\\ & = \frac{(v_{a'-1,j}, e_{a'}^{(j'-1)} f_{a'}^{(j')} e_{a'} v_{a'-1,j}) + (v_{a'-1,j}, e_{a'}^{(j'-1)} f_{a'}^{(j'-1)} [q^{h_{a'}}; 1-j']_q v_{a'-1,j})}{[j']_q}
	\\ & = \frac{[j+1-j']_q (v_{a'-1,j},e_{a'}^{(j'-1)} f_{a'}^{(j'-1)} v_{a'-1,j})}{[j']_q},
	\end{align*}
	so we see by induction on $j'$ that 
	\begin{align*}
	& (f_{a'}^{(j)} v_{a'-1,j}, f_{a'}^{(j)} v_{a'-1,j}) = \frac{[1]_q \cdots [j]_q}{[j]_q \cdots [1]_q} (v_{a'-1,j}, v_{a'-1,j}) = \begin{bmatrix} m \\ j \end{bmatrix}_q.
	\end{align*}
\end{proof}

\begin{prop} \label{homo}
	For all $0 \leq j \leq m$, let $\widehat{\mathsf{V}}^{\lambda^j}$ be the $\mathbb{C}$-form of the $q=1$ specialization of $\widehat{V}^{\lambda^j}$.  Let $\tau_j \colon \widehat{\mathsf{V}}^{\lambda^j} \rightarrow \mathsf{V}^{\lambda}$ be given by $\widehat{\mathsf{F}}^{j}_b \mapsto \mathsf{F}^{\lambda}_{\phi_j(b)}$.  Then $\tau_j$ is an $\mathfrak{sl}_{k-1}$-module homomorphism.  
\end{prop}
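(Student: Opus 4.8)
The plan is to verify that $\tau_j$ commutes with the action of $e_i$ and $f_i$ for all $1 \leq i \leq k-2$.  Since these Chevalley generators generate $\mathfrak{sl}_{k-1}$ as a Lie algebra, this suffices to conclude that $\tau_j$ is an $\mathfrak{sl}_{k-1}$-module homomorphism; the Cartan elements $h_i = [e_i, f_i]$ are then handled automatically.

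The crucial input is Theorem~\ref{upper}, which gives $\phi_j(\widehat{F}^j_b) = (v_j, v_j) F^{\lambda}_{\phi_j(b)}$, together with Lemma~\ref{gauss}, which evaluates $(v_j, v_j) = \begin{bsmallmatrix} m \\ j \end{bsmallmatrix}_q$, a \emph{nonzero} element of $\mathbb{Q}(q)$.  First I would fix $1 \leq i \leq k-2$ and record the matrix of $e_i$ on $\widehat{V}^{\lambda^j}$ relative to $\widehat{F}^{\lambda^j}$ as $e_i \widehat{F}^j_b = \sum_{b'} c_{b,b'}(q) \widehat{F}^j_{b'}$ with $c_{b,b'}(q) \in \mathbb{Q}(q)$.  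Because $e_i \in U_q(\mathfrak{sl}_{k-1})$ and $\phi_j$ is a $U_q(\mathfrak{sl}_{k-1})$-module isomorphism, applying $\phi_j$ to both sides and invoking Theorem~\ref{upper} yields $(v_j, v_j)\, e_i F^{\lambda}_{\phi_j(b)} = \sum_{b'} c_{b,b'}(q)(v_j, v_j) F^{\lambda}_{\phi_j(b')}$; cancelling the nonzero scalar $(v_j, v_j)$ produces $e_i F^{\lambda}_{\phi_j(b)} = \sum_{b'} c_{b,b'}(q) F^{\lambda}_{\phi_j(b')}$.  Thus the matrix of $e_i$ acting on the subrepresentation $\phi_j(\widehat{V}^{\lambda^j}) = U_q(\mathfrak{sl}_{k-1}) v_j$, expressed in the basis $\lbrace F^{\lambda}_{\phi_j(b)} \rbrace_b$, is literally the same rational matrix $(c_{b,b'}(q))$ that governs the intrinsic action on $\widehat{V}^{\lambda^j}$ --- and, since this subspace is $U_q(\mathfrak{sl}_{k-1})$-stable with upper global basis exactly $\lbrace F^{\lambda}_{\phi_j(b)} \rbrace_b$ (Theorems~\ref{res} and~\ref{upper}), no other upper global basis vectors of $V^{\lambda}$ enter the expansion.

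It remains to pass to the $q=1$ specialization.  The matrix entries of $e_i$ (and $f_i$) relative to $F^{\lambda}$ are regular at $q=1$ by construction of the $\mathbb{C}$-form $V^{\lambda}_{\mathbb{C}}$, and likewise for $\widehat{V}^{\lambda^j}_{\mathbb{C}}$; since the same rational functions $c_{b,b'}(q)$ control both actions, their specializations $c_{b,b'}(1)$ coincide.  Hence in $\widehat{V}^{\lambda^j}_{\mathbb{C}}$ we have $e_i \widehat{F}^j_b = \sum_{b'} c_{b,b'}(1) \widehat{F}^j_{b'}$, while in $V^{\lambda}_{\mathbb{C}}$ we have $e_i F^{\lambda}_{\phi_j(b)} = \sum_{b'} c_{b,b'}(1) F^{\lambda}_{\phi_j(b')}$, so applying $\tau_j$ to the former recovers the latter; the identical argument with $f_i$ in place of $e_i$ finishes the verification.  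I expect the only genuine subtlety to be the bookkeeping that ensures the action of $e_i$ and $f_i$ on the restricted basis $\lbrace F^{\lambda}_{\phi_j(b)} \rbrace_b$ is governed by precisely the same rational functions as the action on $\widehat{V}^{\lambda^j}$ --- but this is exactly what the $U_q(\mathfrak{sl}_{k-1})$-equivariance of $\phi_j$ delivers once the scalar $(v_j, v_j)$ from Theorem~\ref{upper} is cancelled, so no real obstacle arises.
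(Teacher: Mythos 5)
Your proof is correct and follows essentially the same route as the paper: both rest on Theorem~\ref{upper}, which exhibits $\phi_j$ in upper-global-basis coordinates as the scalar matrix $(v_j, v_j)\, I$, and then specialize at $q=1$ using the regularity of the matrix entries of $e_i$ and $f_i$ relative to the upper global bases. The only refinement in your version is that, by cancelling the scalar $(v_j, v_j)$ at the level of $\mathbb{Q}(q)$-identities \emph{before} specializing, you need only its nonvanishing, whereas the paper's proof invokes Lemma~\ref{gauss} to know that this scalar is regular at $q=1$ and specializes to the nonzero constant $\binom{m}{j}$.
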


\begin{proof}
	It follows from Theorem~\ref{upper} and Lemma~\ref{gauss} that the $U_q(\mathfrak{sl}_{k-1})$-module homomorphism $\phi_j \colon \widehat{V}^{\lambda^j} \rightarrow V^{\lambda}$ is given by $\widehat{F}^j_b \mapsto \begin{bsmallmatrix} m \\ j \end{bsmallmatrix}_q F^{\lambda}_{\phi_j(b)}$.  Hence the $\mathbb{C}$-linear map $\widehat{\mathsf{V}}^{\lambda^j} \rightarrow \mathsf{V}^{\lambda}$ given by $\widehat{\mathsf{F}}^j_b \mapsto \binom{m}{j} \mathsf{F}^{\lambda}_{\phi_j(b)}$ is an $\mathfrak{sl}_{k-1}$-module homomorphism.  
\end{proof}

\section{Proof of Rhoades's Theorem}

Let $\Lambda = (\Lambda_1, \ldots, \Lambda_k)$ be a partition, and let $\mathsf{V}^{\Lambda}$ be the irreducible $GL_k(\mathbb{C})$-representation with highest weight $\Lambda$.  Set $\lambda := \Lambda_1 E_1 + \cdots + \Lambda_k E_k$.  Then $\lambda$ is the image of $\Lambda$ in the weight lattice of $\mathfrak{sl}_k$, and $\mathsf{V}^{\Lambda}$ is isomorphic as an $\mathfrak{sl}_k$-representation to $\mathsf{V}^{\lambda}$.  Thus, we may consider $\mathsf{F}^{\lambda}$ a basis of $\mathsf{V}^{\Lambda}$ (such that the $\mathfrak{sl}_k$-action on $\mathsf{V}^{\Lambda}$ induced from the $GL_k(\mathbb{C})$-module structure agrees with that induced from the $\mathfrak{sl}_k$-action on $\mathsf{V}^{\lambda}$).  

Identify $B^{\lambda}$ with $SSYT(\Lambda, k)$.  

\begin{thm}[Berenstein--Zelevinsky \cite{Berenstein}, Proposition 8.8; Stembridge \cite{Stembridge}, Theorem 3.1] \label{bz}
	Set $\epsilon_{\Lambda} := (-1)^{\sum_{i=1}^k (i-1) \Lambda_i}$.  For all $b \in SSYT(\Lambda, k)$, 
	\[w_{0,k} \cdot \mathsf{F}^{\lambda}_b = \epsilon_{\Lambda} \mathsf{F}^{\lambda}_{\xi_k(b)}.\] 
\end{thm}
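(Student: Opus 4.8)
The plan is to recognize the statement as the amalgamation of two results in the literature and to reconcile their conventions. \emph{Berenstein--Zelevinsky} supply the representation-theoretic half: the long element $w_{0,k} \in \mathfrak{S}_k$ acts on $V^{\lambda}_{\mathbb{C}}$ (equivalently on $V^{\Lambda}$) through a fixed representative in the Tits lift of the Weyl group---concretely, on the $GL_k(\mathbb{C})$-module $V^{\Lambda}$ it acts by the permutation matrix of $w_{0,k}$---and this operator permutes the dual canonical basis $\lbrace F^{\lambda}_b \rbrace$ up to sign, with the underlying involution of the index set given by the Schützenberger (Lusztig) involution on the crystal $B^{\lambda}$. \emph{Stembridge} supplies the combinatorial half: under the identification of $B^{\lambda}$ with $SSYT(\Lambda, k)$ from Theorem~\ref{crystal}, this involution is precisely evacuation $\xi_k$, and the attendant global sign is $\epsilon_{\Lambda} = (-1)^{\sum_{i=1}^k (i-1)\Lambda_i}$. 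Thus the first task is to set up the action of $w_{0,k}$ on the $q=1$ specialization, and the second is to match the two descriptions.

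First I would verify the skeleton of the signed permutation. Since $w_{0,k}$ carries the weight space $(V^{\lambda}_{\mathbb{C}})_{\mu}$ to $(V^{\lambda}_{\mathbb{C}})_{w_{0,k}\mu}$, and since $F^{\lambda}_{\mu} := \lbrace F^{\lambda}_b \rbrace_{b \in B^{\lambda}_{\mu}}$ is a basis of the $\mu$-weight space (Proposition~\ref{wvec}), the operator $w_{0,k}$ must send each $F^{\lambda}_b$ into the span of the basis vectors of weight $w_{0,k}\mu$. That it sends $F^{\lambda}_b$ to a \emph{single} signed basis vector, and that the resulting involution of $SSYT(\Lambda, k)$ reverses weights while intertwining $\tilde{e}_i$ with $\tilde{f}_{k-i}$, pins it down as evacuation: $\xi_k$ is the unique involution of $SSYT(\Lambda, k)$ enjoying these crystal-theoretic properties. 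This is the content one extracts from \cite{Stembridge}, and it identifies the abstract Lusztig involution of \cite{Berenstein} with $\xi_k$.

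The sign is then computed by tracking the action on the extreme weight vectors. Applying $w_{0,k}$ to the highest-weight element $F^{\lambda}_{\overline{v_{\lambda}}}$ produces a scalar multiple of the lowest-weight element $F^{\lambda}_{\xi_k(\overline{v_{\lambda}})}$, and the scalar is determined by how the chosen lift of $w_{0,k}$ acts on the highest-weight line; writing $w_{0,k}$ as a product of simple reflections and accumulating the contribution indexed by each $\Lambda_i$ yields $\epsilon_{\Lambda} = (-1)^{\sum_{i=1}^k (i-1)\Lambda_i}$. The hard part will be exactly this sign bookkeeping: one must reconcile the normalizations of the dual canonical basis and of the braid symmetries used in \cite{Berenstein} with Stembridge's combinatorial ($q=-1$) normalization in \cite{Stembridge}, and confirm that no additional diagram-automorphism twist intervenes in the identification of the crystal involution with evacuation. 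Once the conventions are aligned on the highest-weight line, the formula propagates to all of $SSYT(\Lambda, k)$ by the compatibility of $w_{0,k}$ with the crystal operators.
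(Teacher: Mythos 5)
The paper does not prove this statement at all: it is imported from the literature exactly as the theorem header indicates (Berenstein--Zelevinsky \cite{Berenstein}, Proposition 8.8, and Stembridge \cite{Stembridge}, Theorem 3.1), with no internal argument given. Your proposal reconstructs it from precisely those two sources --- the signed permutation of the dual canonical basis under $w_{0,k}$ from \cite{Berenstein}, and the identification of the indexing involution with evacuation $\xi_k$ together with the sign $\epsilon_{\Lambda}$ from \cite{Stembridge} --- so it matches the paper's treatment, which likewise defers all of the substantive work (the signed-permutation property and the sign bookkeeping) to those references.
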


\begin{exam} \label{sq}
	Set $\Lambda := (2,2,0)$.  Then $\lambda = 2 \omega_2$ and $\epsilon_{\Lambda} = 1$.  Furthermore, $w_{0,3}$ acts on $\mathsf{V}^{\Lambda}$ by 
	\[
	\mathsf{F}_{\begin{smallmatrix} 1  & 1 \\ 2 & 2 \end{smallmatrix}} \leftrightarrow \mathsf{F}_{\begin{smallmatrix} 2 & 2 \\ 3 & 3 \end{smallmatrix}}; \quad
	\mathsf{F}_{\begin{smallmatrix} 1  & 1 \\ 2 & 3 \end{smallmatrix}} \leftrightarrow \mathsf{F}_{\begin{smallmatrix} 1 & 2 \\ 3 & 3 \end{smallmatrix}}; \quad \mathsf{F}_{\begin{smallmatrix} 1  & 2 \\ 2 & 3 \end{smallmatrix}} \circlearrowleft, \quad \text{and} \quad \mathsf{F}_{\begin{smallmatrix} 1  & 1 \\ 3 & 3 \end{smallmatrix}} \circlearrowleft.\]	
\end{exam}

\begin{exam}
	Set $\Lambda := (2,1,0)$.  Then $\lambda = \omega_2 + \omega_1$ and $\epsilon_{\Lambda} = -1$.  Furthermore, $w_{0,3}$ acts on $\mathsf{V}^{\Lambda}$ by 
	\[	
	\mathsf{F}_{\begin{smallmatrix} 1 & 1 \\ 2 \end{smallmatrix}} \leftrightarrow - \mathsf{F}_{\begin{smallmatrix} 2 & 3 \\ 3 \end{smallmatrix}}; \quad 
	\mathsf{F}_{\begin{smallmatrix} 1 & 2 \\ 2 \end{smallmatrix}} \leftrightarrow - \mathsf{F}_{\begin{smallmatrix} 2 & 2 \\ 3 \end{smallmatrix}}; \quad \mathsf{F}_{\begin{smallmatrix} 1 & 3 \\ 2 \end{smallmatrix}} \leftrightarrow - \mathsf{F}_{\begin{smallmatrix} 1 & 2 \\ 3 \end{smallmatrix}}, \quad \text{and} \quad 
	\mathsf{F}_{\begin{smallmatrix} 1 & 1 \\ 3 \end{smallmatrix}} \leftrightarrow - \mathsf{F}_{\begin{smallmatrix} 1 & 3 \\ 3 \end{smallmatrix}}.  
	\]
\end{exam}

Suppose $\Lambda$ is rectangular.  Let $a$ and $m$ be positive integers for which $\Lambda = (m^a)$.  If $a = k$, then $SSYT(\Lambda, k)$ consists of exactly one tableau, so we may assume $a \leq k-1$.  For all $0 \leq j \leq m$, set $\Lambda^j := (m^{a-1},m-j)$, and let $\widehat{\mathsf{V}}^{\Lambda^j}$ be the irreducible $GL_{k-1}(\mathbb{C})$-representation with highest weight $\Lambda^j$.  

\begin{prop} \label{ghom}
	The map $\tau_j \colon \widehat{\mathsf{V}}^{\Lambda^j} \rightarrow \mathsf{V}^{\Lambda}$ given by $\widehat{\mathsf{F}}^{j}_b \mapsto \mathsf{F}^{\lambda}_{\phi_j(b)}$ is a $GL_{k-1}(\mathbb{C})$-module homomorphism.  
\end{prop}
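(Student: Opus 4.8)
The plan is to promote the $\mathfrak{sl}_{k-1}(\mathbb{C})$-module homomorphism supplied by Proposition~\ref{homo} to a $GL_{k-1}(\mathbb{C})$-module homomorphism, using that $GL_{k-1}(\mathbb{C})$ is connected and is generated by the (connected) subgroup $SL_{k-1}(\mathbb{C})$ together with the subgroup $Z \cong \mathbb{C}^\times$ of scalar matrices. Under the identifications $\widehat{V}^{\lambda^j}_{\mathbb{C}} \cong \widehat{V}^{\Lambda^j}$ and $V^{\lambda}_{\mathbb{C}} \cong V^{\Lambda}$ as $\mathfrak{sl}_{k-1}$-representations, the map $\tau_j$ here coincides with that of Proposition~\ref{homo}, so it is already $\mathfrak{sl}_{k-1}(\mathbb{C})$-linear; since $SL_{k-1}(\mathbb{C})$ is connected with Lie algebra $\mathfrak{sl}_{k-1}(\mathbb{C})$, it is thus $SL_{k-1}(\mathbb{C})$-equivariant. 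It remains only to check that $\tau_j$ intertwines the action of $Z$, which amounts to verifying that the central characters of source and image agree.

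First I would record the relevant central characters. On the source, the scalar matrix $t I_{k-1} \in Z$ acts on the irreducible representation $\widehat{V}^{\Lambda^j}$ of highest weight $\Lambda^j = (m^{a-1}, m-j)$ by $t^{|\Lambda^j|} = t^{ma-j}$. On the target, I would embed $GL_{k-1}(\mathbb{C})$ as the upper-left block of $GL_k(\mathbb{C})$, so that $t I_{k-1}$ maps to $\mathrm{diag}(t, \ldots, t, 1)$ and acts on a $GL_k$-weight vector of weight $\mu = \mu_1 E_1 + \cdots + \mu_k E_k$ by $t^{\mu_1 + \cdots + \mu_{k-1}}$.

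The crux is then the weight computation for the image $\tau_j(\widehat{V}^{\Lambda^j})$. Its highest weight vector is $v_j$, whose $GL_k$-weight is $\lambda - j(E_a - E_k)$, so the sum of its first $k-1$ coordinates is $ma - j$. Moreover, every weight occurring in $\tau_j(\widehat{V}^{\Lambda^j})$ differs from this one by a $\mathbb{Z}$-linear combination of the simple roots $\alpha_1, \ldots, \alpha_{k-2}$ of $\mathfrak{sl}_{k-1}$, each of the form $E_i - E_{i+1}$ with $i \leq k-2$ and hence with vanishing $E_1 + \cdots + E_{k-1}$-component; so the quantity $\mu_1 + \cdots + \mu_{k-1}$ is constant and equal to $ma - j$ throughout the image. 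Consequently $Z$ acts on $\tau_j(\widehat{V}^{\Lambda^j})$ by $t^{ma - j}$ as well.

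Since the two central characters coincide, $\tau_j$ is $Z$-equivariant, and together with its $SL_{k-1}(\mathbb{C})$-equivariance this gives the $GL_{k-1}(\mathbb{C})$-equivariance. The main (and only nonformal) obstacle is the weight bookkeeping that pins down $\mu_1 + \cdots + \mu_{k-1} = ma - j = |\Lambda^j|$ on the image; once that is in hand, the passage from Lie algebra to group equivariance is standard.
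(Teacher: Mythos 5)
Your proposal is correct, but it takes a genuinely different route from the paper's proof. The paper argues abstractly: by the Pieri rule there exist $GL_{k-1}(\mathbb{C})$-module homomorphisms $\tau'_j \colon \widehat{V}^{\Lambda^j} \rightarrow V^{\Lambda}$ realizing the decomposition of $V^{\Lambda}$, and since both $\tau'_j$ and $\tau_j$ are $\mathfrak{sl}_{k-1}$-module homomorphisms out of an irreducible module whose isotypic component in $V^{\Lambda}$ has multiplicity one, Schur's lemma forces $\tau_j$ to be a scalar multiple of $\tau'_j$, hence $GL_{k-1}(\mathbb{C})$-equivariant. You instead integrate directly: connectedness of $SL_{k-1}(\mathbb{C})$ upgrades the $\mathfrak{sl}_{k-1}$-equivariance of Proposition~\ref{homo} to $SL_{k-1}(\mathbb{C})$-equivariance; the center is handled by matching central characters, using that every weight $\mu$ occurring in the image satisfies $\mu_1 + \cdots + \mu_{k-1} = ma - j = |\Lambda^j|$ (the highest weight of the image is $\Lambda - j(E_a - E_k)$, and the roots $\alpha_1, \ldots, \alpha_{k-2}$ have vanishing coordinate sum over the first $k-1$ entries); and the factorization $GL_{k-1}(\mathbb{C}) = \mathbb{C}^{\times} I_{k-1} \cdot SL_{k-1}(\mathbb{C})$ over $\mathbb{C}$ concludes. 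Both arguments are sound, and each buys something: the paper's is shorter given machinery already in place (the Pieri rule was already invoked for Theorem~\ref{decomp}) but leans, without saying so, on multiplicity-freeness of the $GL_k \downarrow GL_{k-1}$ branching; yours needs neither Schur's lemma nor multiplicity-one, and it isolates exactly what separates $\mathfrak{sl}_{k-1}$-equivariance from $GL_{k-1}(\mathbb{C})$-equivariance, namely the central (determinant) twist, at the cost of the standard Lie-algebra-to-group integration step. One small streamlining of your weight bookkeeping: each basis vector $F^{\lambda}_{\phi_j(b)}$ of the image has weight equal to the content of the tableau $\phi_j(b)$, which by construction has exactly $j$ entries equal to $k$, so the identity $\mu_1 + \cdots + \mu_{k-1} = ma - j$ can be read off directly without passing through the highest weight and the root lattice.
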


\begin{proof}
By the Pieri rule, $\mathsf{V}^{\Lambda} \cong \bigoplus_{j=0}^m \widehat{\mathsf{V}}^{\Lambda^j}$, so there exist $GL_{k-1}(\mathbb{C})$-module homomorphisms $\tau'_j \colon \widehat{\mathsf{V}}^{\Lambda^j} \rightarrow \mathsf{V}^{\Lambda}$ such that $\mathsf{V}^{\Lambda} = \bigoplus_{j=0}^m \tau'_j(\widehat{\mathsf{V}}^{\Lambda^j})$.  Since $\tau'_j$ and $\tau_j$ are $\mathfrak{sl}_{k-1}$-module homomorphisms (cf. Proposition~\ref{homo}), it follows that $\tau'_j$ agrees with $\tau_j$ up to scaling by a constant in $\mathbb{C}$.  
\end{proof}

Identify $\widehat{B}^{\lambda^j}$ with $SSYT(\Lambda^j, k-1)$ for all $j$.  Given $b \in SSYT(\Lambda^j, k-1)$, note that $\phi_j(b) \in SSYT(\Lambda, k)$ is the unique tableau such that (i) the entries in the rightmost $j$ boxes in the bottom row are all equal to $k$, and (ii) the tableau obtained by removing these boxes is $b$ (cf. Lemma~\ref{in}).  It follows that $\phi_j$ commutes with $\xi_{k-1}$.  

Recall from Theorem~\ref{res} that $SSYT(\Lambda, k) = \bigsqcup_{j=0}^m \phi_j(SSYT(\Lambda^j, k-1))$.  Thus, it suffices to describe the action of $c_k$ on the basis elements in $\mathsf{V}^{\Lambda}$ corresponding to tableaux in $\phi_j(SSYT(\Lambda^j, k-1))$, whence Rhoades's cyclic sieving result follows.  

\begin{thm}[Rhoades \cite{Rhoades}, Proposition 5.5] \label{main}
	For all $b \in SSYT(\Lambda^j, k-1)$,
	\[c_k \cdot \mathsf{F}^{\lambda}_{\phi_j(b)} = (-1)^{(a-1)j} \mathsf{F}^{\lambda}_{J(\phi_j(b))}.\]
\end{thm}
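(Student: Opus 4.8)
The plan is to exploit the factorization $c_k = w_{0,k} \circ w_{0,k-1}$ of the long cycle --- exactly as $c_{|\Lambda|} = w_{0,|\Lambda|} \circ w_{0,|\Lambda|-1}$ was used for the Kazhdan--Lusztig basis in the introduction --- and to evaluate each of the two long elements by invoking Theorem~\ref{bz}. The ingredient that replaces the isomorphism $\tilde\phi$ of the Kazhdan--Lusztig story is the $GL_{k-1}(\mathbb{C})$-module homomorphism $\tau_j$ of Proposition~\ref{ghom}: it is precisely this map that lets us transport the action of the smaller long element $w_{0,k-1}$ between $\widehat{V}^{\Lambda^j}$ and $V^{\Lambda}$.

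First I would compute $w_{0,k-1} \cdot F^{\lambda}_{\phi_j(b)}$. Since $w_{0,k-1}$ lies in $GL_{k-1}(\mathbb{C})$ (embedded in $GL_k(\mathbb{C})$ as the stabilizer of $E_k$), it acts through the $GL_{k-1}(\mathbb{C})$-module structure, so $\tau_j$ commutes with it. As $\tau_j(\widehat{F}^j_b) = F^{\lambda}_{\phi_j(b)}$, applying Theorem~\ref{bz} to the $GL_{k-1}(\mathbb{C})$-representation $\widehat{V}^{\Lambda^j}$ gives
\[w_{0,k-1} \cdot F^{\lambda}_{\phi_j(b)} = \tau_j\big(w_{0,k-1} \cdot \widehat{F}^j_b\big) = \tau_j\big(\epsilon_{\Lambda^j}\, \widehat{F}^j_{\xi_{k-1}(b)}\big) = \epsilon_{\Lambda^j}\, F^{\lambda}_{\phi_j(\xi_{k-1}(b))},\]
where $\epsilon_{\Lambda^j} := (-1)^{\sum_{i=1}^{k-1}(i-1)\Lambda^j_i}$. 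Since $\phi_j$ commutes with $\xi_{k-1}$, the right-hand side equals $\epsilon_{\Lambda^j}\, F^{\lambda}_{\xi_{k-1}(\phi_j(b))}$.

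Next I would apply $w_{0,k}$, using Theorem~\ref{bz} for $V^{\Lambda}$ itself together with the identity $\xi_k \circ \xi_{k-1} = J$ quoted in the introduction:
\[c_k \cdot F^{\lambda}_{\phi_j(b)} = w_{0,k} \cdot \epsilon_{\Lambda^j}\, F^{\lambda}_{\xi_{k-1}(\phi_j(b))} = \epsilon_{\Lambda}\epsilon_{\Lambda^j}\, F^{\lambda}_{\xi_k(\xi_{k-1}(\phi_j(b)))} = \epsilon_{\Lambda}\epsilon_{\Lambda^j}\, F^{\lambda}_{J(\phi_j(b))}.\]
It then remains to verify the sign identity $\epsilon_{\Lambda}\epsilon_{\Lambda^j} = (-1)^{(a-1)j}$. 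With $\Lambda = (m^a)$ and $\Lambda^j = (m^{a-1}, m-j)$ one computes $\sum_{i=1}^k (i-1)\Lambda_i = m\binom{a}{2}$ and $\sum_{i=1}^{k-1}(i-1)\Lambda^j_i = m\binom{a-1}{2} + (a-1)(m-j)$; their sum equals $m\,a(a-1) - (a-1)j$, which is congruent to $(a-1)j$ modulo $2$ because $a(a-1)$ is even, so the product of signs is $(-1)^{(a-1)j}$, as required.

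I expect the only real subtlety to be the first step: one must be certain that $w_{0,k-1}$ genuinely acts \emph{within} the $GL_{k-1}(\mathbb{C})$-structure on $V^{\Lambda}$, so that Proposition~\ref{ghom} applies and the action of the smaller long element can be pushed through $\tau_j$. This is exactly where the rectangularity of $\Lambda$ --- and hence the compatibility of the (upper) global bases under restriction established in Theorems~\ref{res} and~\ref{upper} and in Proposition~\ref{homo} --- is doing the essential work, in contrast to the Kazhdan--Lusztig setting, where the restriction of $S^{\Lambda}$ was already irreducible. The remaining steps, the two applications of Theorem~\ref{bz} and the parity computation, are routine.
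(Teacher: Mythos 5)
Your proposal is correct and follows essentially the same route as the paper: the factorization $c_k = w_{0,k}\,w_{0,k-1}$, pushing $w_{0,k-1}$ through the homomorphism $\tau_j$ of Proposition~\ref{ghom}, two applications of Theorem~\ref{bz}, the commutation of $\phi_j$ with $\xi_{k-1}$, and the identity $\xi_k \circ \xi_{k-1} = J$. Your explicit verification of the sign identity $\epsilon_{\Lambda}\epsilon_{\Lambda^j} = (-1)^{(a-1)j}$, which the paper leaves implicit, is also correct.
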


\begin{proof}
	Note that $\xi_{k} \circ \xi_{k-1} = J$.  Invoking Theorem~\ref{bz} and Proposition~\ref{ghom}, we find
\begin{align*}
c_k \cdot \mathsf{F}^{\lambda}_{\phi_j(b)} & = w_{0,k} w_{0, k-1} \cdot \mathsf{F}^{\lambda}_{\phi_j(b)} = w_{0,k} \cdot \tau_j \left(w_{0,k-1} \cdot \widehat{\mathsf{F}}^{j}_b\right)
\\ & = w_{0,k} \cdot \tau_j\left(\epsilon_{\Lambda^j} \widehat{\mathsf{F}}^{j}_{\xi_{k-1}(b)}\right) = w_{0,k} \cdot \epsilon_{\Lambda^j} \mathsf{F}^{\lambda}_{\phi_j(\xi_{k-1}(b))}
\\ & = w_{0,k} \cdot \epsilon_{\Lambda^j} \mathsf{F}^{\lambda}_{\xi_{k-1}(\phi_j(b))} = \epsilon_{\Lambda} \epsilon_{\Lambda^j} \mathsf{F}^{\lambda}_{\xi_k(\xi_{k-1}(\phi_j(b)))}
\\ & = (-1)^{(a-1)j} \mathsf{F}^{\lambda}_{J(\phi_j(b))}.
\end{align*}
\end{proof}

\begin{exam}
	Set $\Lambda := (2,2,0)$.  Then \[\widehat{B}^{\lambda^0} = \left \lbrace \begin{smallmatrix} 1 & 1 \\ 2 & 2 \end{smallmatrix} \right \rbrace; \quad \widehat{B}^{\lambda^1} = \left \lbrace  \begin{smallmatrix} 1 & 1 \\ 2  \end{smallmatrix},  \begin{smallmatrix} 1 & 2 \\ 2  \end{smallmatrix} \right \rbrace, \quad \text{and} \quad \widehat{B}^{\lambda^2} = \left \lbrace  \begin{smallmatrix} 1 & 1 \\ &  \end{smallmatrix},  \begin{smallmatrix} 1 & 2 \\ & \end{smallmatrix},  \begin{smallmatrix} 2 & 2 \\ & \end{smallmatrix}\right \rbrace.\]  
	
	Furthermore, 
	\begin{align*}
	& \phi_0(\begin{smallmatrix} 1 & 1 \\ 2 & 2 \end{smallmatrix}) = (\begin{smallmatrix} 1 & 1 \\ 2 & 2 \end{smallmatrix}); \\ & \phi_1(\begin{smallmatrix} 1 & 1 \\ 2  \end{smallmatrix},  \begin{smallmatrix} 1 & 2 \\ 2  \end{smallmatrix}) = (\begin{smallmatrix} 1 & 1 \\ 2 & 3 \end{smallmatrix}, \begin{smallmatrix} 1 & 2 \\ 2 & 3 \end{smallmatrix}), \quad \text{and} \\ & \phi_2(\begin{smallmatrix} 1 & 1 \\ &  \end{smallmatrix},  \begin{smallmatrix} 1 & 2 \\ & \end{smallmatrix},  \begin{smallmatrix} 2 & 2 \\ & \end{smallmatrix}) = (\begin{smallmatrix} 1 & 1 \\ 3 & 3  \end{smallmatrix},  \begin{smallmatrix} 1 & 2 \\ 3 & 3 \end{smallmatrix},  \begin{smallmatrix} 2 & 2 \\ 3 & 3 \end{smallmatrix}).
	\end{align*}
	
	From Theorem~\ref{bz}, we see that $w_{0,2}$ acts on $\widehat{\mathsf{V}}^{\Lambda^0} \oplus \widehat{\mathsf{V}}^{\Lambda^1} \oplus \widehat{\mathsf{V}}^{\Lambda^2}$ by \[\widehat{\mathsf{F}}^0_{\begin{smallmatrix} 1 & 1 \\ 2 & 2 \end{smallmatrix}} \circlearrowleft; \quad  \widehat{\mathsf{F}}^1_{\begin{smallmatrix} 1 & 1 \\ 2  \end{smallmatrix}} \leftrightarrow - \widehat{\mathsf{F}}^1_{\begin{smallmatrix} 1 & 2 \\ 2  \end{smallmatrix}}; \quad \widehat{\mathsf{F}}^2_{\begin{smallmatrix} 1 & 1 \\ &  \end{smallmatrix}} \leftrightarrow \widehat{\mathsf{F}}^2_{\begin{smallmatrix} 2 & 2 \\ & \end{smallmatrix}}, \quad \text{and} \quad \widehat{\mathsf{F}}^2_{\begin{smallmatrix} 1 & 2 \\ & \end{smallmatrix}} \circlearrowleft.\]
	
	Thus, $w_{0,2}$ acts on $\mathsf{V}^{\Lambda}$ by \[\mathsf{F}_{\begin{smallmatrix} 1 & 1 \\ 2 & 2 \end{smallmatrix}} \circlearrowleft; \quad  \mathsf{F}_{\begin{smallmatrix} 1 & 1 \\ 2 & 3 \end{smallmatrix}} \leftrightarrow - \mathsf{F}_{\begin{smallmatrix} 1 & 2 \\ 2 & 3 \end{smallmatrix}}; \quad \mathsf{F}_{\begin{smallmatrix} 1 & 1 \\ 3 & 3 \end{smallmatrix}} \leftrightarrow \mathsf{F}_{\begin{smallmatrix} 2 & 2 \\ 3 & 3 \end{smallmatrix}}, \quad \text{and} \quad \mathsf{F}_{\begin{smallmatrix} 1 & 2 \\ 3 & 3 \end{smallmatrix}} \circlearrowleft.\]
	
	Combining this result with that in Example~\ref{sq}, we conclude that $c_3$ acts on $\mathsf{V}^{\Lambda}$ by 
	\[
	\mathsf{F}_{\begin{smallmatrix} 1 & 1 \\ 2 & 2 \end{smallmatrix}} \mapsto \mathsf{F}_{\begin{smallmatrix} 2 & 2 \\ 3 & 3 \end{smallmatrix}} \mapsto
	\mathsf{F}_{\begin{smallmatrix} 1 & 1 \\ 3 & 3 \end{smallmatrix}} \mapsto 
	\mathsf{F}_{\begin{smallmatrix} 1 & 1 \\ 2 & 2 \end{smallmatrix}} \quad \text{and} \quad \mathsf{F}_{\begin{smallmatrix} 1 & 1 \\ 2 & 3 \end{smallmatrix}} \mapsto - \mathsf{F}_{\begin{smallmatrix} 1 & 2 \\ 2 & 3 \end{smallmatrix}} \mapsto \mathsf{F}_{\begin{smallmatrix} 1 & 2 \\ 3 & 3 \end{smallmatrix}} \mapsto \mathsf{F}_{\begin{smallmatrix} 1 & 1 \\ 2 & 3 \end{smallmatrix}},\] which is in agreement with Theorem~\ref{main}.  
	
\end{exam}

\begin{exam}
	Set $\Lambda := (2,1,0)$.  Then \[\mathsf{V}^{\Lambda} \cong \widehat{\mathsf{V}}^{(2,1)} \oplus \widehat{\mathsf{V}}^{(1,1)} \oplus \widehat{\mathsf{V}}^{(2,0)} \oplus \widehat{\mathsf{V}}^{(1,0)}.\]  
	
	Although Theorem~\ref{bz} describes the action of $w_{0,2}$ on the right-hand side, we do not thereby obtain a description of the action of $w_{0,2}$ on the upper global basis of $\mathsf{V}^{\Lambda}$, for Proposition~\ref{ghom} does not apply.  
	
	Furthermore, the action of $c_3$ on $\mathsf{F}^{\lambda}$ does not lift the action of $J$ on $B^{\lambda}$.  Indeed, since $J$ exchanges $\begin{smallmatrix} 1 & 3 \\ 2 \end{smallmatrix}$ and  $\begin{smallmatrix} 1 & 2 \\ 3 \end{smallmatrix}$, the order of $J$ is $6$, not $3$.  
\end{exam}

\section{Acknowledgments}
The author thanks Brendon Rhoades for giving us all something interesting to think about these past years.  He also thanks the anonymous referee for numerous editorial suggestions.


\begin{thebibliography}{21}
	
	\bibitem{Berenstein} A. Berenstein and A. Zelevinsky, Canonical bases for the quantum group of type $A_r$ and piecewise-linear combinatorics, \textit{Duke Math. J.} \textbf{82} (1996), 473--502.  
	
	\bibitem{Du} J. Du, Canonical bases for irreducible representations of quantum $GL_n$, \textit{Bull. London Math. Soc.} \textbf{24} (1992), 325--334.  
	
	\bibitem{Fontaine} B. Fontaine and J. Kamnitzer, Cyclic sieving, rotation, and geometric representation theory, \textit{Selecta Math.} \textbf{20} (2014), 609--625.  
	
	\bibitem{Garsia} A. Garsia and T. McLarnan, Relations between Young's natural and the Kazhdan--Lusztig representations of $S_n$, \textit{Adv. Math.} \textbf{69} (1988), 32--92.  
	
	\bibitem{Grojnowski} I. Grojnowski and G. Lusztig, A comparison of bases of quantized enveloping algebras, \textit{Linear Algebraic Groups and Their Representations}, Providence, RI: The American Mathematical Society, 1993, pp. 11--20.  
	
	\bibitem{Hong} J. Hong and S.-J. Kang, \textit{Introduction to Quantum Groups and Crystal Bases}, Providence, RI: The American Mathematical Society, 2002.  
	
	\bibitem{Hopkins} S. Hopkins, Cyclic sieving for plane partitions and symmetry, arXiv:1907.09337.
	
	\bibitem{Kashiwara2} M. Kashiwara, Global crystal bases of quantum groups, \textit{Duke Math. J.} \textbf{69} (1993), 455--485.  
	
	\bibitem{Kashiwara} M. Kashiwara, On crystal bases, \textit{Representations of Groups}, Providence, RI: The American Mathematical Society, 1995, pp. 155--197.  
	
	\bibitem{KashiwaraN} M. Kashiwara and T. Nakashima, Crystal graphs for representations of the $q$-analogue of classical Lie algebras, \textit{J. Algebra} \textbf{165} (1994), 295--345.  
	
	\bibitem{Lusztig} G. Lusztig, Canonical bases arising from quantized enveloping algebras, \textit{J. Amer. Math. Soc.} \textbf{3} (1990), 447--498.  
	
	\bibitem{Purbhoo} K. Purbhoo, Wronskians, cyclic group actions, and ribbon tableaux, \textit{Trans. Amer. Math. Soc.} \textbf{365} (2013), 1977--2030.  

	\bibitem{Reiner} V. Reiner, D. Stanton, and D. White, The cyclic sieving phenomenon, \textit{J. Combin. Theory Ser. A} \textbf{108} (2004), 17--50.  
	
	\bibitem{Reiner2} V. Reiner, D. Stanton, and D. White, What is... cyclic sieving?, \textit{Notices Amer. Math. Soc.} \textbf{61} (2014), 169--171.  
	
	\bibitem{Rhoades} B. Rhoades, Cyclic sieving, promotion, and representation theory, \textit{J. Combin. Theory Ser. A} \textbf{117} (2010), 38--76.

	\bibitem{Rush} D. B. Rush, Cyclic sieving and plethysm coefficients, \textit{Trans. Amer. Math. Soc.} \textbf{371} (2019), 923--947.  
	
	\bibitem{Sagan} B. Sagan, The cyclic sieving phenomenon: a survey, \textit{Surveys in Combinatorics 2011}, Cambridge, UK: Cambridge University Press, 2011, pp. 183--234.  
	
	\bibitem{Shen} L. Shen and D. Weng, Cyclic sieving and cluster duality for Grassmannian, arXiv:1803.06901.  
	
	\bibitem{Skandera} M. Skandera, On the dual canonical and Kazhdan--Lusztig bases and 3412, 4231-avoiding permutations, \textit{J. Pure Appl. Algebra} \textbf{212} (2008), 1086--1104.  
	
	\bibitem{Stembridge} J. R. Stembridge, Canonical bases and self-evacuating tableaux, \textit{Duke Math. J.} \textbf{82} (1996), 585--606.  

	\bibitem{Westbury} B. Westbury, Invariant tensors and the cyclic sieving phenomenon, \textit{Electron. J. Combin.} \textbf{23} (2016), P4.25.  
	
\end{thebibliography}
\end{document}